\theoremstyle{definition} 
\newtheorem{dfn}{Definition}
\newtheorem{ex}[dfn]{Example}
\newtheorem{pro}[dfn]{Proposition}
\newtheorem{lmm}[dfn]{Lemma}
\newtheorem*{thm*}{Theorem}
\newtheorem{thm}[dfn]{Theorem}
\newtheorem{cor}[dfn]{Corollary}
\newcommand{\proofPart}[1]{{\bf $\boldsymbol{#1}$:}}
\newcommand{\setBuilder}[2]{\left\{{#1}\left|{#2}\right.\right\}}
\newcommand{\setBuilderBarLeft}[2]{\left\{\left.{#1}\right|{#2}\right\}}
\newcommand{\setBuilderBarRight}[2]{\left\{{#1}\left|{#2}\right.\right\}}
\newcommand{\card}[1]{\left|#1\right|}
\newcommand{\p}{\ensuremath{^\prime}}
\newcommand{\pp}{\ensuremath{^{\prime\prime}}}
\newcommand{\ppp}{\ensuremath{^{\prime\prime\prime}}}
\newcommand{\set}[1]{\left\{{#1}\right\}}
\newcommand{\dsym}{\mathcal D}
\newcommand{\hsym}{\mathcal H}
\newcommand{\decom}[1]{\dsym\left({#1}\right)}
\newcommand{\decomn}[2]{\dsym_{#2}\left({#1}\right)}
\newcommand{\N}{\mathbb{N}}
\newcommand{\Z}{\mathbb{Z}}
\newcommand{\C}{\mathscr{C}}
\newcommand{\nodes}[1]{\mathcal{V}_{#1}}
\newcommand{\edges}[1]{\mathcal{E}_{#1}}
\newcommand{\labeling}[1]{\mathcal{L}_{#1}}
\newcommand{\lab}[2]{\labeling{#1}\left({#2}\right)}
\newcommand{\gadd}[2]{{#1}\oplus{#2}}
\newcommand{\gsub}[2]{{#1}\ominus{#2}}
\newcommand*{\defeq}{\mathrel{\vcenter{\baselineskip0.5ex \lineskiplimit0pt
                     \hbox{\scriptsize.}\hbox{\scriptsize.}}}%
                     =}
\newcommand*{\eqdef}{=
                     \mathrel{\vcenter{\baselineskip0.5ex \lineskiplimit0pt
                     \hbox{\scriptsize.}\hbox{\scriptsize.}}}%
                     }
\begin{document} 

\title{Connect Four and Graph Decomposition}
\author{Laurent Evain}
\email{laurent.evain@univ-angers.fr}
\address{Universit\'e d'Angers, 
Facult\'e des Sciences, 
D\'epartement de math\'ematiques,
2, Boulevard Lavoisier, 
49045 Angers Cedex 01, 
FRANCE}
\author{Mathias Lederer}
\email{mlederer@math.cornell.edu}
\address{Department of Mathematics, 
Cornell University, 
Malott Hall, 
Ithaca, 
New York 14853, 
USA}
\author{Bjarke Hammersholt Roune}
\email{bhroune@math.cornell.edu}
\address{Department of Mathematics, 
Cornell University, 
Malott Hall, 
Ithaca, 
New York 14853, 
USA}
\thanks{The second author was supported by a Marie Curie International
  Outgoing Fellowship of the EU Seventh Framework Program. The third
  author was supported by The Danish Council for Independent Research
  | Natural Sciences.}
\date{March 2013}
\keywords{Standard sets, labeled graphs, polynomial time complexity}
\subjclass[2000]{05A17; 05A19; 05C30; 05C78; 68Q25}

\begin{abstract} 
  We introduce standard decomposition, a natural way of decomposing a
  labeled graph into a sum of certain labeled subgraphs. We motivate
  this graph-theoretic concept by relating it to Connect Four
  decompositions of standard sets. We prove that all standard
  decompositions can be generated in polynomial time, which implies
  that all Connect Four decompositions can be generated in polynomial
  time.
\end{abstract}

\maketitle

%%%%%%%%%%%%

\section{Introduction}

Let $G$ be a directed graph. We say that an integer-valued labeling on
the nodes of $G$ is \emph{compatible with the edge relation} if for
all edges $(a,b)$, the label of node $a$ is less than or equal to the label of
node $b$.  Graphs satisfying that compatibility form the class of
\emph{standard graphs}; they are the objects of study of the present
paper.

The paper is divided into two parts. In the first part, we study
standard graphs and introduce a way of decomposing a standard graph as
a sum of \emph{standard components} --- these are the standard
subgraphs of $G$ whose labels are 0 or 1. Here addition of labeled
graphs is defined as addition of the labels.  A \emph{standard
  decomposition} of a standard graph is a multiset of standard
components whose sum is the given graph.  Standard components may be
viewed as the building blocks of a standard graph.

Standard decomposition is not unique --- standard graphs in general
admit more than one standard decomposition.  Figure
\ref{figure:basicGraph} shows a simple example of a standard graph and
all its standard decompositions. This raises the question of what the
complexity of generating all standard decompositions given a standard
graph is. Theorem \ref{thm:generatingComplexity} answers this question
and it is the main result of the first part of the paper.

\begin{thm}
\phantomsection\label{thm:generatingComplexity}
  It is possible to generate all the standard
  decompositions of a standard graph in polynomial time.
\end{thm}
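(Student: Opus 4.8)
The plan is to translate standard decomposition into a purely combinatorial coloring problem and then exhibit an enumeration algorithm whose search tree has no dead ends, so that the running time is polynomial in the combined size of the input and the output --- the natural meaning of generating an a-priori exponential family of decompositions in polynomial time.

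First I would record the basic structural fact that a standard component is the same thing as an up-set of the reachability preorder induced by the edges: a $0/1$ labeling $\ell$ is compatible with the edge relation exactly when the set $\set{v : \ell(v)=1}$ is closed under following edges forward. Consequently a standard decomposition of $G$ with labeling $\labeling{}$ is the same datum as an assignment $v \mapsto S_v$, where $S_v$ is a set of ``colors'' with $\card{S_v} = \lab{}{v}$ and $S_a \subseteq S_b$ for every edge $(a,b)$, taken up to a global permutation of the colors; color $c$ then names the component $\set{v : c \in S_v}$, and multiplicities are recorded by repeated colors. This recasts the problem as enumerating, up to renaming colors, all such monotone color-set assignments.

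Next I would fix a linear extension $v_1,\dots,v_n$ of the nodes (available since $\labeling{}$ is compatible with the edges) and build the assignment incrementally, choosing $S_{v_1}, S_{v_2}, \dots$ in turn. When $S_{v_{k+1}}$ is chosen it must contain the forced set $\bigcup_{(u,v_{k+1})\in\edges{}} S_u$ and have total size $\lab{}{v_{k+1}}$; the remaining colors are drawn either from colors already in use or introduced as brand-new colors, and to kill the ambiguity coming from permuting colors I would insist that a new color always receives the smallest index not yet used, a canonical-form rule in the spirit of restricted-growth strings. This guarantees that every decomposition is produced exactly once.

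The heart of the proof, and the step I expect to be the main obstacle, is showing that the resulting search tree has no dead ends: every partial assignment reachable by the algorithm extends to at least one full standard decomposition, so no computation is wasted on branches that fizzle. The difficulty is that a color placed on $v_{k+1}$ propagates forward to all descendants, and over-eager reuse or introduction of colors can force some later node $w$ to inherit more than $\lab{}{w}$ distinct colors, which is fatal. I would therefore prove an extendability lemma characterizing the admissible partial assignments --- those in which, for every node $w$, the number of colors forced on $w$ by its already-assigned ancestors is at most $\lab{}{w}$ --- and verify both that the admissible choices at each step are exactly the ones preserving this invariant and that they can be enumerated directly. Granting the lemma, admissibility is maintained along every branch, each leaf is a genuine decomposition, distinct leaves give distinct decompositions, the tree has depth $n$ with easily enumerated children, and a standard depth-first traversal then yields all standard decompositions within the desired polynomial time bound.
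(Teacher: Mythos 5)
Your opening reformulation is correct: standard decompositions of $G$ are the same thing as assignments $v \mapsto S_v$ of color sets with $\card{S_v}=\lab G v$ and $S_a \subseteq S_b$ along edges, up to renaming colors. But the extendability lemma on which everything rests is false, and this is a fatal gap, not a technicality. Counterexample: let $G$ have nodes $a_1,\dots,a_r$, $m$, $w_1,\dots,w_r$, all with label $1$, and edges $(a_j,w_j)$ and $(m,w_j)$ for $j=1,\dots,r$. Since all labels are $1$, a standard decomposition of $G$ is a partition of the nodes into sets closed under following edges forward; the block of $m$ contains every $w_j$, and the block of $a_j$ contains $w_j$, so all blocks coincide and $G$ has \emph{exactly one} standard decomposition. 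Now run your procedure with the linear extension $a_1,\dots,a_r,m,w_1,\dots,w_r$ and give the $a_j$ pairwise distinct colors $1,\dots,r$ (legal under your restricted-growth rule). Your invariant holds at every step: the only nodes with assigned ancestors are the $w_j$, and the colors forced on $w_j$ form the set $\set{j}$, of size $1=\lab G {w_j}$, while nothing is forced on $m$. Yet already for $r=2$ this partial assignment extends to no decomposition: $S_m$ must be a single color $c$ with $\card{\set{j}\cup\set{c}}\leq 1$ for every $j$, i.e.\ $c$ would have to equal several distinct colors at once. So preserving your invariant is necessary but not sufficient for extendability --- the conflict is transmitted through the still-unassigned node $m$, which shares descendants with the assigned nodes, and counting colors forced by assigned ancestors cannot see it. The failure is also quantitative: with this ordering, every one of the $B_r$ (Bell number) colorings of $a_1,\dots,a_r$ satisfies your invariant, exactly one branch survives, and the output has size $1$, so the search tree has super-exponentially many dead ends relative to input plus output, and the claimed polynomial bound collapses.

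This is exactly the difficulty the paper's proof is organized around, and it is resolved with a different branching structure: rather than fixing, node by node, the set of colors at each node, the paper fixes, one at a time, \emph{whole components} containing a single node $v$ of \emph{minimal positive label}, and tests a partial multiset $\hsym$ of such components by whether the remainder $\gsub G {\sum\hsym}$ is standard. Propositions \ref{pro:relevantCriterion} and \ref{pro:relevantCriterion>0} show that this test is exact --- any partial choice with standard remainder completes, using copies of the maximal standard component --- and the minimality of the label of $v$ is essential for that completion trick. On the graph above, that test disposes of everything instantly: the only standard component of this connected, all-ones graph is the whole graph, so there are no wasted branches. To repair your argument you would need an extendability test for partial color assignments that is both exact and cheaply computable, and it must look at more than ancestor-forced color counts; no such test appears in your proposal.
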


\begin{center}
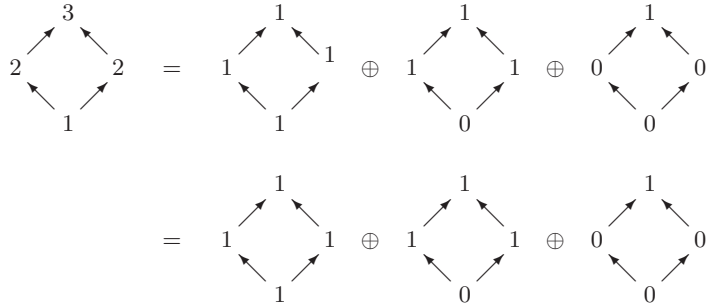
\begin{figure}[ht]
  \begin{picture}(260,125)
    % the graph
    \put(23,108){\small $3$}
    \put(10,95){\vector(1,1){10}}
    \put(3,87){\small $2$}
    \put(20,74){\vector(-1,1){10}}
    \put(30,74){\vector(1,1){10}}
    \put(23,66){\small $1$}
    \put(42,87){\small $2$}
    \put(40,95){\vector(-1,1){10}}
    % first sum
    \put(61,87){\small $=$}
    \put(103,108){\small $1$}
    \put(90,95){\vector(1,1){10}}
    \put(83,87){\small $1$}
    \put(100,74){\vector(-1,1){10}}
    \put(110,74){\vector(1,1){10}}
    \put(103,66){\small $1$}
    \put(122,92){\small $1$}
    \put(120,95){\vector(-1,1){10}}
    \put(136,87){\small $\oplus$}
    \put(173,108){\small $1$}
    \put(160,95){\vector(1,1){10}}
    \put(153,87){\small $1$}
    \put(170,74){\vector(-1,1){10}}
    \put(180,74){\vector(1,1){10}}
    \put(173,66){\small $0$}
    \put(192,87){\small $1$}
    \put(190,95){\vector(-1,1){10}}
    \put(206,87){\small $\oplus$}
    \put(243,108){\small $1$}
    \put(230,95){\vector(1,1){10}}
    \put(223,87){\small $0$}
    \put(240,74){\vector(-1,1){10}}
    \put(250,74){\vector(1,1){10}}
    \put(243,66){\small $0$}
    \put(262,87){\small $0$}
    \put(260,95){\vector(-1,1){10}}
    % second sum
    \put(61,22){\small $=$}
    \put(103,43){\small $1$}
    \put(90,30){\vector(1,1){10}}
    \put(83,22){\small $1$}
    \put(100,9){\vector(-1,1){10}}
    \put(110,9){\vector(1,1){10}}
    \put(103,1){\small $1$}
    \put(122,22){\small $1$}
    \put(120,30){\vector(-1,1){10}}
    \put(136,22){\small $\oplus$}
    \put(173,43){\small $1$}
    \put(160,30){\vector(1,1){10}}
    \put(153,22){\small $1$}
    \put(170,9){\vector(-1,1){10}}
    \put(180,9){\vector(1,1){10}}
    \put(173,1){\small $0$}
    \put(192,22){\small $1$}
    \put(190,30){\vector(-1,1){10}}
    \put(206,22){\small $\oplus$}
    \put(243,43){\small $1$}
    \put(230,30){\vector(1,1){10}}
    \put(223,22){\small $0$}
    \put(240,9){\vector(-1,1){10}}
    \put(250,9){\vector(1,1){10}}
    \put(243,1){\small $0$}
    \put(262,22){\small $0$}
    \put(260,30){\vector(-1,1){10}}
  \end{picture}
\caption{A standard graph and its two standard decompositions}
\phantomsection\label{figure:basicGraph}
\end{figure}
\end{center}

In the second part of the paper, we link standard graphs and standard decomposition to 
a previously studied subject --- \emph{Connect Four decomposition of standard sets}. 
A standard set is an ``$n$-dimensional staircase'', and a Connect
Four decomposition of a standard set $\Delta$ is a set of $n-1$ dimensional
standard sets from which $\Delta$ can be built by stacking them on top of each other 
and ``letting gravity pull them down''. 
Figure \ref{figure:basicC4} shows a simple example of a three
dimensional standard set and all its Connect Four decompositions; the
graph from Figure \ref{figure:basicGraph} encodes that same standard
set.

Connect Four decomposition is an ubiquitous notion  relevant to the study of
the Hilbert scheme of points \cite{components}. It is useful in the
study of singularities of plane curves as a tool for the Horace method
\cite{hirschowitz}, in the context of Gr\"obner basis theory \cite{eisenbud}, \cite{jpaa}, \cite{components}, to compute
tangent spaces \cite[Proposition 7.5]{nakajima}, or to produce new
counterexamples to Hilbert's fourteenth problem \cite{evain}. Handling Connect
Four decompositions is what
originally prompted the work in this paper.
We will show: 

\begin{thm}
\phantomsection\label{thm:graphsVsC4}
\begin{enumerate}
  \item[(i)] The two problems, 
    \begin{enumerate}
      \item[(a)] computing standard decompositions of labeled graphs, and
      \item[(b)] computing Connect Four decompositions of finite standard sets,
    \end{enumerate}
    are equivalent in the sense that for each labeled graph $G$, 
    there exists a standard set $\Delta$ such that the standard decompositions of $G$
    are in canonical bijection with the Connect Four decompositions of $\Delta$, and conversely. 
  \item[(ii)] This equivalence preserves polynomial complexity in the sense  
  that for each labeled graph $G$, we can compute a standard set $\Delta$ with graph $G$ in polynomial time, 
  and for each standard set $\Delta$, we can compute its graph $G(\Delta)$ in polynomial time. 
\end{enumerate}
\end{thm}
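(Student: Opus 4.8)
The plan is to translate both decomposition problems into a single combinatorial problem about a finite poset carrying a monotone integer weight, and then to show that standard sets and standard graphs realize exactly the same such data.

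First I would attach to a standard set $\Delta\subseteq\N^n$ its \emph{base poset} $B$, namely the projection of $\Delta$ to the first $n-1$ coordinates (the supports of the nonempty columns) with the coordinatewise order, together with the \emph{height function} $h\colon B\to\N$ recording the length of the column of $\Delta$ over each base point. Downward closure of $\Delta$ makes $B$ an order ideal of $\N^{n-1}$ and makes $h$ order-reversing. The associated graph $G(\Delta)$ then has node set $B$, labeling $h$, and one edge per covering relation of $B$, oriented so that the labeling is compatible; since $h$ decreases as coordinates increase, this forces the edges to run against the coordinatewise order, so the reachability poset of $G(\Delta)$ is the order dual of $B$ and $h$ is monotone along it. Reading $B$, its covers, and the column lengths off an explicit list of points of $\Delta$ is clearly polynomial in $\card\Delta$, which settles the computation of $G(\Delta)$ in part (ii).

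The key point, which I would isolate as a lemma, is that the two kinds of building block coincide. A standard component of $G(\Delta)$ is a $\{0,1\}$-labeling compatible with the edges; by the chosen orientation this is exactly the indicator of a downward-closed subset of $B$, i.e.\ of an $(n-1)$-dimensional standard set contained in the base, which is precisely a layer that may be stacked. Under this identification a standard decomposition of $G(\Delta)$ — a multiset of components summing to $h$ — is the same datum as a multiset of layers whose per-column counts reproduce $h$, i.e.\ a Connect Four decomposition of $\Delta$. This yields the canonical bijection of part (i) in the direction $\Delta\mapsto G(\Delta)$. Abstractly, both sides ask to write the order-reversing weight $h$ as a sum of indicators of order ideals of $B$; the layer-cake identity $h=\sum_{t\ge 1}\mathbf{1}_{\{h\ge t\}}$ shows a decomposition always exists, and the number of decompositions depends only on the pair $(B,h)$. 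For the converse I would first reduce an arbitrary standard graph $G$ to such poset data: compatibility of a labeling depends only on the preorder generated by the edges (a labeling is monotone along every edge iff along every directed path), and nodes on a common directed cycle are forced to share label and $\{0,1\}$-value in every component, so collapsing strongly connected components leaves the standard decompositions unchanged. This reduces $G$ to a finite poset $P$ with a monotone labeling $\ell$, and it remains to realize the order dual $(P^{\mathrm{op}},\ell)$ as the base-with-height $(B,h)$ of some standard set: embed $P^{\mathrm{op}}$ order-preservingly into a product of chains $\N^{d}$ via a realizer, erect over the image of each $p$ a column of length $\ell(p)$, take $B$ to be the downward closure of the image, and extend $\ell$ to an order-reversing height on the auxiliary base points created by the closure.

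The hard part is precisely this realization step. Two things must be controlled simultaneously. First, the downward closure introduces auxiliary base points not coming from $P$, and these carry layers of their own; one must choose their heights so that the extra layers occur in every Connect Four decomposition with multiplicities \emph{determined by} those of the genuine layers, so that the count neither undershoots nor overshoots the standard decompositions of $G$. For minimal auxiliary points (new bottoms, arising when $P$ has several maximal elements) the heights must be taken large — large enough that their constraints become slack — whereas for auxiliary points sandwiched between two image points the admissible heights are pinned into an interval fixed by the neighbouring labels, so the embedding must be chosen to keep this interaction benign. Second, the number of auxiliary points and the ambient dimension $d$ must stay polynomial in the size of $G$, so that $\card\Delta$ is polynomial and both translations run in polynomial time; I would attack this by embedding through a small realizer (order dimension, or a Dilworth chain partition for the width) and bounding the closure and the assigned heights by the number of nodes and the largest label. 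Making this realization faithful \emph{and} polynomial is the crux on which part (ii), and hence the whole equivalence, rests.
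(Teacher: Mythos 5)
Your reduction of both problems to ``a finite poset with a monotone weight, decomposed into indicators of order ideals'' matches the paper's framework: your base-poset/height construction is the paper's $G(\Delta)$, your layer-cake lemma is Proposition \ref{pro:C4vsGraph}, and your collapse of strongly connected components and cycles is the canonicalization of Propositions \ref{pro:canonicalizatedGraph} and \ref{pro:canonicalization}. So the direction ``standard set $\to$ graph'' is sound. The problem is the converse realization step, and you say it yourself: you defer precisely the point on which the theorem rests (``making this realization faithful and polynomial is the crux''). What you have is therefore a reduction of Theorem \ref{thm:graphsVsC4} to an unproved claim, not a proof; the paper's entire technical core (Proposition \ref{pro:graphToStdSet}, together with Lemma \ref{lmm:transitiveClosure} and Proposition \ref{pro:reductionToUniqueMaximalNode}) lives exactly inside the step you left open.

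Moreover, the sketch you give for that step runs into two concrete obstacles. First, the auxiliary base points created by taking the downward closure of an embedded image must not appear as nodes of the canonicalized graph at all: any extra node of positive label adds standard components and in general changes the set of decompositions, and no choice of ``slack'' or ``pinned'' heights repairs this unless every auxiliary point has the same height as, and is adjacent to, an existing isohypse component, so that canonicalization absorbs it. Engineering exactly this absorption is what the paper's nested induction does (the pillar and mushroom constructions in Proposition \ref{pro:graphToStdSet}), and it is also why transitivity and a unique maximal node must be arranged first --- indeed Proposition \ref{pro:graphNotArisingFromSet} shows some graphs in $\mathcal{S}$ arise from no standard set, so an embedding argument that ignores transitivity cannot succeed. (Your ``new bottom with large height'' is the standard-set shadow of Proposition \ref{pro:reductionToUniqueMaximalNode} and is fine; it is the sandwiched auxiliary points that you have no mechanism for.) Second, your complexity accounting assumes $\Delta$ is stored as an explicit list of points and that $\card{\Delta}$ can be kept polynomial. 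That is doubtful and is not what the paper does: the realization forces the ambient dimension to grow (in the paper it grows with the number of edges), downward closures in high dimension blow up the number of lattice points, and $\card{\Delta}$ is at least the largest label, so a point list is already exponential when labels are large. Part (ii) is proved in the paper only for the representation of $\Delta$ by its outer corners (the minimal generators of $\N^d\setminus\Delta$); without adopting that representation, your version of part (ii) does not go through even granting the realization step.
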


\begin{cor}
It is possible to generate all Connect Four decompositions of a
standard set in polynomial time.
\end{cor}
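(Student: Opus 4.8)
The plan is to deduce the corollary by simply composing the two main theorems of the paper, so that no new combinatorial content is needed. Given a finite standard set $\Delta$, the first step is to compute its associated graph $G(\Delta)$; by Theorem~\ref{thm:graphsVsC4}(ii) this takes time polynomial in $\card{\Delta}$, and in particular the size of $G(\Delta)$ is bounded polynomially in $\card{\Delta}$. The second step is to apply Theorem~\ref{thm:generatingComplexity} to $G(\Delta)$, which generates all standard decompositions of $G(\Delta)$ in polynomial time. The final step is to push the output back through the canonical bijection of Theorem~\ref{thm:graphsVsC4}(i): each standard decomposition of $G(\Delta)$ corresponds to exactly one Connect Four decomposition of $\Delta$, so translating the list produced in the second step yields precisely the collection of all Connect Four decompositions of $\Delta$.

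The only point requiring care is that the qualifier ``polynomial time'' be preserved under each translation. Because $G(\Delta)$ is computed in time polynomial in $\card{\Delta}$ and has polynomially bounded size, running the generation algorithm of Theorem~\ref{thm:generatingComplexity} on $G(\Delta)$ still runs in time polynomial in $\card{\Delta}$, a polynomial of a polynomial being a polynomial. For the last step one must use the bijection of Theorem~\ref{thm:graphsVsC4}(i) in its effective form: translating a single standard component into its Connect Four counterpart should itself be a polynomial-time operation, so converting each generated decomposition contributes only polynomial overhead per decomposition. Chaining these three bounds gives a single polynomial-time procedure that outputs all Connect Four decompositions of $\Delta$.

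I expect no genuine obstacle here, since the corollary is a formal consequence of the two theorems already established; the substantive work lies entirely in Theorems~\ref{thm:generatingComplexity} and~\ref{thm:graphsVsC4}. The one place to remain vigilant is the precise accounting model for ``polynomial time'' in an enumeration setting --- whether one means total time, polynomial delay, or output-polynomial time --- and making sure the same model is used consistently across the invocation of Theorem~\ref{thm:generatingComplexity} and the final bijective translation, so that the inherited complexity guarantee is exactly the one claimed.
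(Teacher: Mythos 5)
Your proposal is correct and takes essentially the same route as the paper, which presents this corollary as an immediate consequence of composing Theorem \ref{thm:generatingComplexity} with Theorem \ref{thm:graphsVsC4}: compute $G(\Delta)$ in polynomial time, generate its standard decompositions, and translate back through the canonical bijection. The one slip is in your middle paragraph, where you claim the generation step runs in time polynomial in $\card{\Delta}$ alone --- this is false, since already Example \ref{ex:bigOutput} exhibits graphs (hence standard sets) with exponentially many decompositions; throughout, ``polynomial time'' must be read in the paper's generating sense, namely polynomial in the combined size of input \emph{and} output, which is precisely the consistency issue your final paragraph flags, and with that reading fixed the composition goes through.
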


\begin{center}
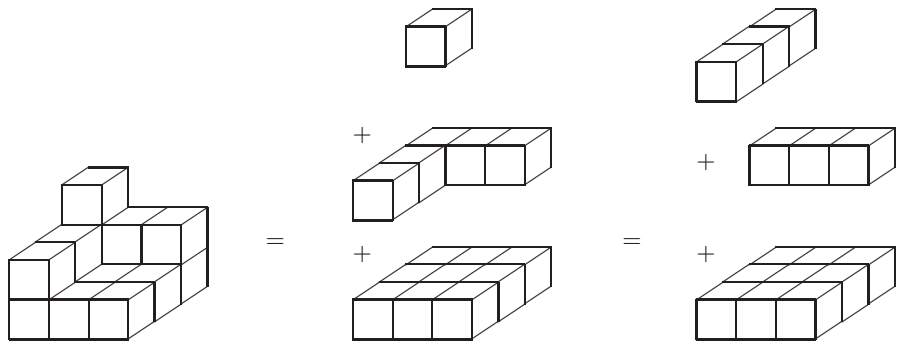
\begin{figure}[ht]
  \begin{picture}(335,140)
    % standard set
    \multiput(0,5)(0,15){2}{\line(1,0){45}}
    \multiput(0,35)(10,6.66){2}{\line(1,0){15}}
    \multiput(15,20)(15,0){2}{\line(3,2){20}}
    \put(15,35){\line(3,2){30}}
    \multiput(0,5)(15,0){2}{\line(0,1){30}}
    \multiput(30,5)(15,0){2}{\line(0,1){15}}
    \multiput(45,5)(0,15){2}{\line(3,2){30}}
    \multiput(75,25)(-10,-6.66){2}{\line(0,1){30}}
    \multiput(25,26.66)(10,6.66){2}{\line(1,0){30}}
    \put(25,26.66){\line(0,1){15}}
    \put(55,26.66){\line(0,-1){15}}
    \put(35,33.33){\line(0,1){30}}
    \put(50,33.33){\line(0,1){15}}
    \multiput(50,48.33)(15,0){2}{\line(3,2){10}}
    \put(65,48.33){\line(-1,0){45}}
    \put(75,55){\line(-1,0){30}}
    \put(0,35){\line(3,2){20}}
    \put(20,48.33){\line(0,1){15}}
    \multiput(20,63.33)(15,0){2}{\line(3,2){10}}
    \multiput(20,63.33)(10,6.66){2}{\line(1,0){15}}
    \put(45,55){\line(0,1){15}}
    \put(97,40){\small $=$}
    \put(130,35){\small $+$}
    \put(130,80){\small $+$}
    \put(232,40){\small $=$}
    \put(260,35){\small $+$}
    \put(260,70){\small $+$}
    % first decomposition
    % up
    \multiput(160,130)(-10,-6.66){2}{\line(1,0){15}}
    \multiput(160,130)(15,0){2}{\line(-3,-2){10}}
    \multiput(150,123.33)(15,0){2}{\line(0,-1){15}}
    \put(175,130){\line(0,-1){15}}
    \put(175,115){\line(-3,-2){10}}
    \put(150,108.33){\line(1,0){15}}
    % mid
    \multiput(160,85)(15,0){2}{\line(-3,-2){30}}
    \multiput(160,85)(-10,-6.66){2}{\line(1,0){45}}
    \multiput(130,65)(10,6.66){2}{\line(1,0){15}}
    \multiput(145,50)(10,6.66){3}{\line(0,1){15}}
    \put(130,50){\line(0,1){15}}
    \put(130,50){\line(1,0){15}}
    \put(145,50){\line(3,2){20}}
    \multiput(205,85)(-15,0){2}{\line(-3,-2){10}}
    \put(205,70){\line(0,1){15}}
    \put(205,70){\line(-3,-2){10}}
    \multiput(195,63.33)(-15,0){2}{\line(0,1){15}}
    \put(195,63.33){\line(-1,0){30}}
    % down
    \put(130,5){\line(1,0){45}}
    \multiput(130,5)(15,0){3}{\line(0,1){15}}
    \multiput(130,20)(15,0){4}{\line(3,2){30}}
    \put(175,5){\line(3,2){30}}
    \multiput(175,5)(10,6.66){4}{\line(0,1){15}}
    \multiput(130,20)(10,6.66){4}{\line(1,0){45}}
    % second decomposition
    % up
    \multiput(290,130)(-10,-6.66){4}{\line(1,0){15}}
    \put(290,130){\line(-3,-2){30}}
    \multiput(305,130)(0,-15){2}{\line(-3,-2){30}}
    \multiput(305,130)(-10,-6.66){4}{\line(0,-1){15}}
    \put(260,110){\line(0,-1){15}}
    \put(260,95){\line(1,0){15}}
    % mid
    \multiput(290,85)(-10,-6.66){2}{\line(1,0){45}}
    \put(280,63.33){\line(1,0){45}}
    \multiput(280,63.33)(15,0){3}{\line(0,1){15}}
    \multiput(325,63.33)(0,15){2}{\line(3,2){10}}
    \multiput(325,63.4)(10,6.66){2}{\line(0,1){15}}
    \multiput(280,78.33)(15,0){3}{\line(3,2){10}}    
    % down
    \put(260,5){\line(1,0){45}}
    \multiput(260,5)(15,0){3}{\line(0,1){15}}
    \multiput(260,20)(15,0){4}{\line(3,2){30}}
    \put(305,5){\line(3,2){30}}
    \multiput(305,5)(10,6.66){4}{\line(0,1){15}}
    \multiput(260,20)(10,6.66){4}{\line(1,0){45}}
  \end{picture}
\caption{A standard set and its two Connect Four decompositions}
\phantomsection\label{figure:basicC4}
\end{figure}
\end{center}

We conclude our paper with an appendix which links the notions
introduced in this paper to other classical tools and problems. 
First we present a generating function for the number of standard
decompositions of a given graph. 
Then we show that the set of all \emph{Connect Four games} in $\N^d$ of a given size $n$ is in canonical
bijection with the set of $(d-1)$-fold iterated partitions of $n$.

\subsubsection*{A word on the proofs}
\phantomsection\label{sec:word-proofs}

We prove Theorem \ref{thm:generatingComplexity} by presenting an
algorithm that generates all standard decompositions in polynomial
time.  The algorithm is based on reducing the problem of computing all
standard decompositions of $G$ to the problem of computing all
standard decompositions of $G$ containing a fixed node $v$.  We then
solve that problem in a recursive way. Any choice of the node $v$
results in a correct algorithm, yet we give a specific choice of $v$
that allows the algorithm to generate its output in polynomial time.

The proof of Theorem \ref{thm:graphsVsC4} is done in several steps. 
We first attach a graph $G(\Delta)$ to each standard set $\Delta$ such that the standard
decompositions of $G(\Delta)$ and the Connect Four decompositions of
$\Delta$ are in canonical bijection. From $G(\Delta)$ we then define
another graph $G\p(\Delta)$ that is easier to work with, called the
\emph{canonicalized standard graph}, which has the same
decompositions (see Proposition \ref{pro:canonicalization}). We show
that all labeled graphs arising from standard sets in this way have
three specific properties, namely,
\begin{itemize}
  \item they are standard,
  \item they are connected, and 
  \item they have a unique node of maximal label. 
\end{itemize}
Let $\mathcal{S}$ be the class of labeled graphs satisfying these
conditions. The connectedness assumption in the definition of
$\mathcal{S}$ is not essential for the complexity of the graphs from
that class, since the standard decompositions of a disjoint union of
graphs is the product of the standard decompositions of the individual
graphs. We prove in Proposition \ref{pro:graphToStdSet} that each
connected graph in $\mathcal{S}$ arises from a standard set if, in
addition, the relation on the nodes of the graph defined by the edges
of the graph is transitive. In Proposition
\ref{pro:reductionToUniqueMaximalNode}, 
we show that for each
connected standard graph, there exists a graph in $\mathcal{S}$ such
that the standard decompositions of the two graphs are in canonical bijection. 

\subsection*{Acknowledgements}

Thanks to Dustin Cartwright, Daniel Erman, Allen Knutson, Diane
Maclagan, Jenna Rajchgot, Roy Skjelnes, Greg Smith, Mike Stillman
and Elmar Teufl for fruitful discussions.

%%%%%%%%%%%%

\section{Standard graphs and standard components}
\phantomsection\label{sec:standardGraphsAndComponents}

All graphs under consideration are directed, have finitely many nodes
and do not have any parallel edges or loops. Given a graph, let $<$ be
the partial preorder on the set of nodes such that $a < b$ if $b$ is
reachable from $a$. The graphs that we consider are labeled in the
following sense.

\begin{dfn}[Labeled graph]
A \emph{labeled graph} is a graph $G$ with a finite node set
$\nodes G$, an edge set $\edges G\subseteq\nodes G\times\nodes G$
such that the graph contains no loops, 
and a labeling of nodes $\labeling G\colon \nodes G\rightarrow\Z$. 
\end{dfn}

This definition does not allow parallel edges since the edge set is
not a multiset. The constraints on parallel edges and loops are not
important to the results of this paper. We impose those conditions for
simplicity since loops and parallel edges add nothing interesting to
the problem.

\begin{dfn}[Standard graph]
A labeled graph $G$ is \emph{standard} if all labels are non-negative
and the labeling is compatible with the partial order on the nodes in the sense that 
$\lab G a \leq \lab G b$ for all edges $(a,b)\in\edges G$.
\end{dfn}

We now introduce the operations of \emph{addition} and
\emph{subtraction} on labeled graphs.

\begin{dfn}[Addition and subtraction]
\phantomsection\label{dfn:additionSubtraction}
Let $G$ and $H$ be labeled graphs. 
Then $\gadd G H$ 
is the labeled graph with node set $\nodes {\gadd G H} \defeq {\nodes G} \cup {\nodes H}$, 
edge set $\edges {\gadd G H} \defeq {\edges G} \cup {\edges H}$
and labeling
\[
\labeling{\gadd G H}\defeq
\begin{cases}
\labeling G&\text{ for }v\in\nodes G\setminus\nodes H,\\
\labeling G + \labeling H & \text{ for }v\in\nodes G\cap\nodes H,\\
\labeling H&\text{ for }v\in\nodes H\setminus\nodes G.
\end{cases}
\]
We define $\gsub G H$ to have the same node set and edge set as $\gadd G H$, 
but with labeling
\[
\labeling{\gsub G H}\defeq
\begin{cases}
\labeling G&\text{ for }v\in\nodes G\setminus\nodes H,\\
\labeling G - \labeling H & \text{ for }v\in\nodes G\cap\nodes H,\\
-\labeling H&\text{ for }v\in\nodes H\setminus\nodes G.
\end{cases}
\]
\end{dfn}

 The sum of two standard graphs \emph{with the same set of nodes and
   the same set of edges} is again a standard graph. This is in
 general true when the set of nodes or edges differ, as shown in the
 example from Figure \ref{figure:sumOfStdIsntStd}.  In the present
 paper, we will only consider sums and differences of graphs sharing
 the same set of nodes and edges.
% \footnote{Definition 
% \ref{dfn:additionSubtraction} is needed in full generality for example if one 
% replaces all graphs under consideration by their subgraphs containing only nodes of positive label.
% Doing so, one obtains slightly different but equivalent versions of our results.}
% Sums of standard graphs with different sets of nodes or edges do not have to be a standard graph. 

\begin{center}
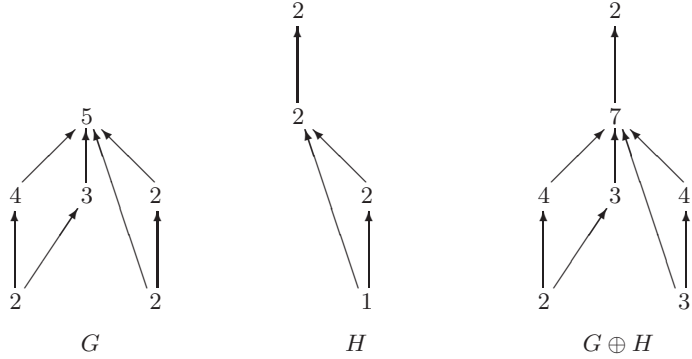
\begin{figure}[ht]
  \begin{picture}(260,130)
    % first graph
    \put(30,86){\small $5$}
    \put(8,64){\vector(1,1){20}}
    \put(32,64){\vector(0,1){20}}
    \put(58,64){\vector(-1,1){20}}
    \put(3,56){\small $4$}
    \put(30,56){\small $3$}
    \put(56,56){\small $2$}
    \put(55,24){\vector(-1,3){20}}
    \put(59,24){\vector(0,1){29}}
    \put(9,24){\vector(2,3){20}}
    \put(5,24){\vector(0,1){29}}
    \put(3,16){\small $2$}
    \put(56,16){\small $2$}
    \put(30,0){\small $G$}    
    % second graph
    \put(110,126){\small $2$}
    \put(112,94){\vector(0,1){29}}
    \put(110,86){\small $2$}
    \put(138,64){\vector(-1,1){20}}
    \put(136,56){\small $2$}
    \put(135,24){\vector(-1,3){20}}
    \put(139,24){\vector(0,1){29}}
    \put(136,16){\small $1$}
    \put(130,0){\small $H$}
    % third graph
    \put(230,126){\small $2$}
    \put(232,94){\vector(0,1){29}}
    \put(230,86){\small $7$}
    \put(208,64){\vector(1,1){20}}
    \put(232,64){\vector(0,1){20}}
    \put(258,64){\vector(-1,1){20}}
    \put(203,56){\small $4$}
    \put(230,56){\small $3$}
    \put(256,56){\small $4$}
    \put(255,24){\vector(-1,3){20}}
    \put(259,24){\vector(0,1){29}}
    \put(209,24){\vector(2,3){20}}
    \put(205,24){\vector(0,1){29}}
    \put(203,16){\small $2$}
    \put(256,16){\small $3$}
    \put(220,0){\small $\gadd G H$}    
  \end{picture}
\caption{The sum of two standard graphs}
\phantomsection\label{figure:sumOfStdIsntStd}
\end{figure}
\end{center}

\begin{dfn}[0-1 graph]
A labeled graph is a \emph{0-1 graph} if all labels are 0 or 1.
\end{dfn}

If we take a standard graph and replace all positive labels by 1, then
we obtain another standard graph. This is a \emph{standard 0-1 graph}
--- a graph that is both standard and a 0-1 graph. Some subgraphs $H$
of a standard graph $G$ are standard 0-1 graphs and in some cases we
can write $G$ as $\gadd H G\p$, where $G\p$ is another standard
graph. In this case we call $H$ a \emph{standard component} of $G$.

\begin{dfn}[Standard component]
Let $G$ and $H$ be labeled graphs. 
Then $H$ is a \emph{standard component} of $G$ if
\begin{enumerate}
\item $H$ is a standard 0-1 graph;
\item $\gsub G H$ is a standard graph; and 
\item not all labels in $H$ are zero.
\end{enumerate}
\end{dfn}

We think of standard components of $G$ as the building blocks of $G$.  Our goal is
to determine all the ways to build a graph out of such building
blocks.

\begin{dfn}[Standard decomposition]
Let $G$ be a labeled graph. A multiset of labeled graphs $\hsym$ is a
\emph{standard decomposition} of $G$ if each $H\in\hsym$ is a standard
component of $G$ and $G=\sum_{H \in \hsym} H$. We denote the set of
standard decompositions of $G$ by $\decom G$.
\end{dfn}

To keep formulas succint, we use the shorthand notation
$\sum\hsym\defeq\sum_{H \in \hsym} H$.

A standard 0-1 graph $G$ admits only the standard decomposition
$\{G\}$.  In particular, the building blocks of a graph are
indecomposable; this is why we call them standard \emph{components}.
We define standard decompositions to be multisets rather than sets
since a standard component can appear multiple times within one
decomposition.

\begin{ex}\phantomsection\label{ex:}
  Figure \ref{figure:decompositions} shows a standard graph and all
  its decompositions. 
\end{ex}

\begin{center}
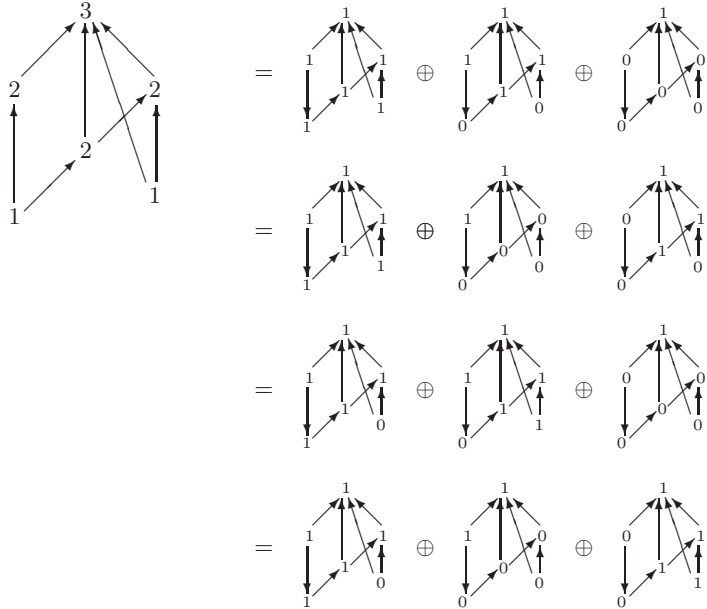
\begin{figure}[ht]
  \begin{picture}(290,230)
    % graph
    \put(30,226){\small $3$}
    \put(8,204){\vector(1,1){20}}
    \put(32,181){\vector(0,1){43}}
    \put(58,204){\vector(-1,1){20}}
    \put(3,196){\small $2$}
    \put(30,173){\small $2$}
    \put(56,196){\small $2$}
    \put(55,164){\vector(-1,3){20}}
    \put(59,164){\vector(0,1){29}}
    \put(9,153){\vector(1,1){19}}
    \put(5,156){\vector(0,1){37}}
    \put(37,179){\vector(1,1){18}}
    \put(3,148){\small $1$}
    \put(56,156){\small $1$}
    % first sum
    \put(96,203){\small $=$}    
    % summand 1
    \put(129,226){\tiny $1$}
    \put(115,208){\tiny $1$}
    \put(128.5,196){\tiny $1$}
    \put(143,208){\tiny $1$}
    \put(114,183){\tiny $1$}
    \put(142,190){\tiny $1$}
    \put(118,214){\vector(1,1){10}}
    \put(129,201){\vector(0,1){23}}
    \put(141,196){\vector(-1,3){9.5}}
    \put(144,214){\vector(-1,1){10}}
    \put(116,206){\vector(0,-1){18}}
    \put(118,187){\vector(1,1){10}}
    \put(144,196){\vector(0,1){10}}
    \put(132.5,199){\vector(1,1){10}}
    \put(157,203){\small $\oplus$}    
    % summand 2
    \put(189,226){\tiny $1$}
    \put(175,208){\tiny $1$}
    \put(188.5,196){\tiny $1$}
    \put(203,208){\tiny $1$}
    \put(173,183){\tiny $0$}
    \put(202,190){\tiny $0$}
    \put(217,203){\small $\oplus$}    
    \put(178,214){\vector(1,1){10}}
    \put(189,201){\vector(0,1){23}}
    \put(201,196){\vector(-1,3){9.5}}
    \put(204,214){\vector(-1,1){10}}
    \put(176,206){\vector(0,-1){18}}
    \put(178,187){\vector(1,1){10}}
    \put(204,196){\vector(0,1){10}}
    \put(192.5,199){\vector(1,1){10}}
    % summand 3
    \put(249,226){\tiny $1$}
    \put(235,208){\tiny $0$}
    \put(248.5,196){\tiny $0$}
    \put(263,208){\tiny $0$}
    \put(233,183){\tiny $0$}
    \put(262,190){\tiny $0$}
    \put(238,214){\vector(1,1){10}}
    \put(249,201){\vector(0,1){23}}
    \put(261,196){\vector(-1,3){9.5}}
    \put(264,214){\vector(-1,1){10}}
    \put(236,206){\vector(0,-1){18}}
    \put(238,187){\vector(1,1){10}}
    \put(264,196){\vector(0,1){10}}
    \put(252.5,199){\vector(1,1){10}}
    % second sum
    \put(96,143){\small $=$}    
    % summand 1
    \put(129,166){\tiny $1$}
    \put(115,148){\tiny $1$}
    \put(128.5,136){\tiny $1$}
    \put(143,148){\tiny $1$}
    \put(114,123){\tiny $1$}
    \put(142,130){\tiny $1$}
    \put(157,143){\small $\oplus$}
    \put(118,154){\vector(1,1){10}}
    \put(129,141){\vector(0,1){23}}
    \put(141,136){\vector(-1,3){9.5}}
    \put(144,154){\vector(-1,1){10}}
    \put(116,146){\vector(0,-1){18}}
    \put(118,127){\vector(1,1){10}}
    \put(144,136){\vector(0,1){10}}
    \put(132.5,139){\vector(1,1){10}}
    \put(157,143){\small $\oplus$}    
    % summand 2
    \put(189,166){\tiny $1$}
    \put(175,148){\tiny $1$}
    \put(188.5,136){\tiny $0$}
    \put(203,148){\tiny $0$}
    \put(173,123){\tiny $0$}
    \put(202,130){\tiny $0$}
    \put(178,154){\vector(1,1){10}}
    \put(189,141){\vector(0,1){23}}
    \put(201,136){\vector(-1,3){9.5}}
    \put(204,154){\vector(-1,1){10}}
    \put(176,146){\vector(0,-1){18}}
    \put(178,127){\vector(1,1){10}}
    \put(204,136){\vector(0,1){10}}
    \put(192.5,139){\vector(1,1){10}}
    \put(217,143){\small $\oplus$}    
    % summand 3
    \put(249,166){\tiny $1$}
    \put(235,148){\tiny $0$}
    \put(248.5,136){\tiny $1$}
    \put(263,148){\tiny $1$}
    \put(233,123){\tiny $0$}
    \put(262,130){\tiny $0$}
    \put(238,154){\vector(1,1){10}}
    \put(249,141){\vector(0,1){23}}
    \put(261,136){\vector(-1,3){9.5}}
    \put(264,154){\vector(-1,1){10}}
    \put(236,146){\vector(0,-1){18}}
    \put(238,127){\vector(1,1){10}}
    \put(264,136){\vector(0,1){10}}
    \put(252.5,139){\vector(1,1){10}}
    % third sum
    \put(96,83){\small $=$}    
    % summand 1
    \put(129,106){\tiny $1$}
    \put(115,88){\tiny $1$}
    \put(128.5,76){\tiny $1$}
    \put(143,88){\tiny $1$}
    \put(114,63){\tiny $1$}
    \put(142,70){\tiny $0$}
    \put(118,94){\vector(1,1){10}}
    \put(129,81){\vector(0,1){23}}
    \put(141,76){\vector(-1,3){9.5}}
    \put(144,94){\vector(-1,1){10}}
    \put(116,86){\vector(0,-1){18}}
    \put(118,67){\vector(1,1){10}}
    \put(144,76){\vector(0,1){10}}
    \put(132.5,79){\vector(1,1){10}}
    \put(157,83){\small $\oplus$}    
    % summand 2
    \put(189,106){\tiny $1$}
    \put(175,88){\tiny $1$}
    \put(188.5,76){\tiny $1$}
    \put(203,88){\tiny $1$}
    \put(173,63){\tiny $0$}
    \put(202,70){\tiny $1$}
    \put(178,94){\vector(1,1){10}}
    \put(189,81){\vector(0,1){23}}
    \put(201,76){\vector(-1,3){9.5}}
    \put(204,94){\vector(-1,1){10}}
    \put(176,86){\vector(0,-1){18}}
    \put(178,67){\vector(1,1){10}}
    \put(204,76){\vector(0,1){10}}
    \put(192.5,79){\vector(1,1){10}}
    \put(217,83){\small $\oplus$}    
    % summand 3
    \put(249,106){\tiny $1$}
    \put(235,88){\tiny $0$}
    \put(248.5,76){\tiny $0$}
    \put(263,88){\tiny $0$}
    \put(233,63){\tiny $0$}
    \put(262,70){\tiny $0$}
    \put(238,94){\vector(1,1){10}}
    \put(249,81){\vector(0,1){23}}
    \put(261,76){\vector(-1,3){9.5}}
    \put(264,94){\vector(-1,1){10}}
    \put(236,86){\vector(0,-1){18}}
    \put(238,67){\vector(1,1){10}}
    \put(264,76){\vector(0,1){10}}
    \put(252.5,79){\vector(1,1){10}}
    % fourth sum
    \put(96,23){\small $=$}    
    % summand 1
    \put(129,46){\tiny $1$}
    \put(115,28){\tiny $1$}
    \put(128.5,16){\tiny $1$}
    \put(143,28){\tiny $1$}
    \put(114,3){\tiny $1$}
    \put(142,10){\tiny $0$}
    \put(118,34){\vector(1,1){10}}
    \put(129,21){\vector(0,1){23}}
    \put(141,16){\vector(-1,3){9.5}}
    \put(144,34){\vector(-1,1){10}}
    \put(116,26){\vector(0,-1){18}}
    \put(118,7){\vector(1,1){10}}
    \put(144,16){\vector(0,1){10}}
    \put(132.5,19){\vector(1,1){10}}
    \put(157,23){\small $\oplus$}
    % summand 2
    \put(189,46){\tiny $1$}
    \put(175,28){\tiny $1$}
    \put(188.5,16){\tiny $0$}
    \put(203,28){\tiny $0$}
    \put(173,3){\tiny $0$}
    \put(202,10){\tiny $0$}
    \put(178,34){\vector(1,1){10}}
    \put(189,21){\vector(0,1){23}}
    \put(201,16){\vector(-1,3){9.5}}
    \put(204,34){\vector(-1,1){10}}
    \put(176,26){\vector(0,-1){18}}
    \put(178,7){\vector(1,1){10}}
    \put(204,16){\vector(0,1){10}}
    \put(192.5,19){\vector(1,1){10}}
    \put(217,23){\small $\oplus$}    
    % summand 3
    \put(249,46){\tiny $1$}
    \put(235,28){\tiny $0$}
    \put(248.5,16){\tiny $1$}
    \put(263,28){\tiny $1$}
    \put(233,3){\tiny $0$}
    \put(262,10){\tiny $1$}
    \put(238,34){\vector(1,1){10}}
    \put(249,21){\vector(0,1){23}}
    \put(261,16){\vector(-1,3){9.5}}
    \put(264,34){\vector(-1,1){10}}
    \put(236,26){\vector(0,-1){18}}
    \put(238,7){\vector(1,1){10}}
    \put(264,16){\vector(0,1){10}}
    \put(252.5,19){\vector(1,1){10}}
  \end{picture}
\caption{All decompositions of a graph.}
\phantomsection\label{figure:decompositions}
\end{figure}
\end{center}

Since the sum of two standard graphs with the same nodes and edges is
standard, a labeled graph has a standard decomposition only if it is
standard. Proposition \ref{pro:extend} shows that the converse is also true.

\begin{dfn}[Maximal standard component]
The \emph{maximal standard component} of a standard graph $G$ is the
unique standard component $H$ for which $\lab H v=1$ if, and only if,
$\lab G v>0$.
\end{dfn}

The standard component is maximal in the sense that it contains all
other standard components. Note that the maximal standard component is
always a standard component unless we are in the degenerate case where
all nodes of $G$ are labeled zero.

\begin{pro}
\phantomsection\label{pro:extend}
Let $\hsym$ be a multiset of standard components of a labeled graph
$G$. Then $\hsym$ can be extended to a standard decomposition of $G$ if,
and only if, $\gsub G {\sum\hsym}$ is standard. In particular, a
standard graph admits a standard decomposition. 
\end{pro}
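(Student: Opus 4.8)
The plan is to prove the two directions of the equivalence separately, reading off the final assertion as the special case $\hsym=\emptyset$, for which $\sum\hsym$ is the all-zero graph and $\gsub{G}{\sum\hsym}=G$.

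The forward direction is the easy one. Suppose $\hsym$ extends to a standard decomposition $\hsym\p$ of $G$, and write $\hsym\p=\hsym\uplus\mathcal K$ for the multiset $\mathcal K\defeq\hsym\p\setminus\hsym$. Since $\sum\hsym\p=G$, we get $\gsub{G}{\sum\hsym}=\sum\mathcal K$. Every member of $\mathcal K$ is a standard component and hence a standard graph on the shared node and edge set, so I would finish by repeatedly applying the fact recalled in Section~\ref{sec:standardGraphsAndComponents} that a sum of standard graphs with the same nodes and edges is again standard; this shows $\gsub{G}{\sum\hsym}$ is standard.

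For the converse I would induct on the nonnegative integer $w\defeq\sum_{v\in\nodes{G\p}}\lab{G\p}{v}$, where $G\p\defeq\gsub{G}{\sum\hsym}$ is standard by hypothesis. If $w=0$ then all labels of $G\p$ vanish, so $\sum\hsym=G$ and $\hsym$ is already a standard decomposition. Otherwise $G\p$ carries a positive label, so its maximal standard component $M$ exists. The crux is to check that $M$ is a standard component of $G$, not merely of $G\p$: it is a standard 0-1 graph with not all labels zero by construction, and for the remaining condition I would use $G=\gadd{(\sum\hsym)}{G\p}$ to rewrite $\gsub{G}{M}=\gadd{(\sum\hsym)}{(\gsub{G\p}{M})}$, then observe that $\gsub{G\p}{M}$ is standard (as $M$ is a standard component of $G\p$) and that $\sum\hsym$ is standard (being a sum of standard components), so their sum on the common nodes and edges is standard again. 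Thus $\hsym\uplus\set M$ is a multiset of standard components of $G$ with $\gsub{G}{\sum(\hsym\uplus\set M)}=\gsub{G\p}{M}$ standard of weight $w-\card{\set{v:\lab{G\p}{v}>0}}<w$. By the inductive hypothesis $\hsym\uplus\set M$ extends to a standard decomposition of $G$, and since extension is transitive, so does $\hsym$.

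I expect the only genuinely subtle point to be this promotion of $M$ from a standard component of the residual graph $G\p$ to a standard component of $G$. It is precisely what dictates that one peel off the maximal standard component of $G\p$ (rather than of $G$) at each step, and it rests on the additivity of standardness under sums sharing a common node and edge set; everything else is bookkeeping about multisets and the termination of the induction.
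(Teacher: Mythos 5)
Your proof is correct and takes essentially the same route as the paper: the substantive direction peels off the maximal standard component of the residual $\gsub G{\sum\hsym}$ and inducts on total label weight, exactly as in the paper's proof of Proposition \ref{pro:extend}. The only differences are cosmetic --- you handle the easy direction directly (the residual equals $\sum\mathcal{K}$, a sum of standard graphs on a common node and edge set) where the paper argues by contrapositive, and you spell out the point the paper leaves implicit, namely that the maximal standard component of the residual is also a standard component of $G$ itself.
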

\begin{proof}
\proofPart{\text{if}} If $G=\sum\hsym$ then we are done, so suppose that
$G\neq\sum\hsym$. Let $C$ be the maximal standard component of
$\gsub G {\sum\hsym}$. Then $\gsub G {\sum(\hsym\cup\set C)}$ is standard. 
The assertion follows from this by induction.

\proofPart{\text{only if}} If $\hsym\p$ is a multiset of standard
components of $G$ that contains $\hsym$, and $\gsub G {\sum\hsym}$ is not
standard, then neither is $\gsub G {\sum\hsym\p}$, so $\hsym\p$ is not a
standard decomposition of $G$.

The last statement of the proposition follows from the first by taking $\hsym=\emptyset$. 
\end{proof}

\begin{cor}
\phantomsection\label{cor:compExtend}
If $H$ is a standard component of a standard graph $G$, then $H$ is an
element of at least one standard decomposition of $G$.
\end{cor}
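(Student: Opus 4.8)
The plan is to derive this as an immediate specialization of Proposition \ref{pro:extend}. First I would apply that proposition to the singleton multiset $\hsym\defeq\set H$, noting that $\sum\hsym=H$ because $\hsym$ contains only the single component $H$. Proposition \ref{pro:extend} then tells us that $\hsym$ can be extended to a standard decomposition of $G$ if, and only if, $\gsub G {\sum\hsym}=\gsub G H$ is standard.

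The key step is to observe that the standardness of $\gsub G H$ is not something we need to establish separately: it is precisely condition (2) in the definition of a standard component. Since $H$ is assumed to be a standard component of $G$, we know by hypothesis that $\gsub G H$ is standard. Hence the criterion of Proposition \ref{pro:extend} is met, and $\set H$ extends to some standard decomposition $\hsym\p\in\decom G$.

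Finally, since $\hsym\p$ contains $\hsym=\set H$ as a subset (that is the meaning of ``extends''), the component $H$ is an element of $\hsym\p$, which is exactly the desired conclusion. I do not expect any real obstacle here: the corollary is a direct reformulation of the ``if'' direction of Proposition \ref{pro:extend} in the case of a single component, and the only hypothesis that direction requires --- the standardness of $\gsub G H$ --- is built into the definition of standard component.
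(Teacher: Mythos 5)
Your proof is correct and is exactly the argument the paper intends: the corollary is stated without proof precisely because it is the special case $\hsym=\set H$ of Proposition \ref{pro:extend}, with the hypothesis $\gsub G H$ standard supplied by condition (2) of the definition of standard component. Your write-up simply makes that implicit argument explicit.
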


We conclude this section with a remark on the class of graphs under
consideration.  We stated earlier in this section that loops and
parallel edges are not interesting in the theory of standard graphs.
We now state that
\begin{itemize}
  \item neither are cycles in $G$,
  \item nor nodes with label zero, 
  \item nor edges $(a,b)\in\edges G$ with $\lab G a=\lab G b$.
\end{itemize}
To support these statements, let $H$ be a standard component of $G$
and let $(a,b)$ be an edge of $G$ such that $\lab G a=\lab G b$. Then
$\lab H a=1$ if, and only if, $\lab H b=1$.  So to study standard
decompositions, we might as well suppress all such edges $(a,b)$,
merging the nodes $a$ and $b$ into a single node $ab$. We then
replace  each edge with source either $a$ or $b$ by an edge with
source $ab$ and the same target as before, 
and each edge with target either $a$ or $b$ by an edge with target
$ab$ and the same source as before.    If a standard graph 
has a cycle, then the labels along the cycle are all equal, so
contracting same-label edges removes all cycles.  Nodes with label
zero are not relevant for standard decomposition either, so we can get
rid of those nodes too. 
%In this way we can think of a 0-1 graph as the
%graph consisting only of the nodes with label 1. 
Combining these
ideas, we get the notion of a \emph{canonical labeled graph}.

\begin{dfn}[Canonical labeled graph]
\phantomsection\label{dfn:canlabeledGraph}
A labeled graph $G$ is \emph{canonical} if
\begin{enumerate}
\item $G$ is standard; 
\item $G$ has no cycles; 
\item all labels are positive; 
\item $\lab G a<\lab G b$ for all edges $(a,b)\in\edges G$.
\end{enumerate}
\end{dfn}

From the above discussion, we have proved: 
\begin{pro}
\phantomsection\label{pro:canonicalizatedGraph}
  For each standard graph $G$, there is a canonical graph
  $G\p$ such that the standard decompositions and standard components of
  $G$ and $G\p$ are related by a bijection. 
\end{pro}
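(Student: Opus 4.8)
The plan is to realize the canonicalization as a finite composition of two elementary reductions, and to show that each reduction induces a natural bijection on standard components that is \emph{additive} (commutes with $\oplus$), so that it descends to a bijection on standard decompositions. The two reductions are exactly those foreshadowed in the discussion: deleting nodes with label zero, and contracting edges whose two endpoints carry equal labels.

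\textbf{Deletion of zero-labeled nodes.} I would first record that if $\lab G v = 0$, then every standard component $H$ of $G$ satisfies $\lab H v = 0$: since $\gsub G H$ is standard its labels are non-negative, so $\lab G v - \lab H v \geq 0$ forces the $0$-$1$ value $\lab H v$ to be $0$. Moreover, because $G$ is standard, any edge $(a,v)$ forces $\lab G a \leq \lab G v = 0$, hence $\lab G a = 0$; thus the zero-labeled nodes are closed under predecessors, and the reachability preorder among the strictly positive nodes is unchanged when the zero nodes are deleted. Let $G\p$ be $G$ with all zero-labeled nodes and their incident edges removed, and let $\phi(H)$ be $H$ with the same nodes removed. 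I would check that $\phi$ sends standard components of $G$ to standard components of $G\p$, that its inverse is ``re-attach the deleted nodes, each with label $0$'' (which always yields a standard component of $G$, as just computed), and that $\phi$ commutes with $\oplus$, so that $\hsym \mapsto \set{\phi(H) : H \in \hsym}$ is the desired bijection on decompositions.

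\textbf{Contraction of an equal-label edge.} Here the key input is the structural fact established in the discussion preceding the proposition: if $(a,b)$ is an edge with $\lab G a = \lab G b$, then for every standard component $H$ one has $\lab H a = 1$ if and only if $\lab H b = 1$. Hence merging $a$ and $b$ into a single node $ab$ --- deleting the resulting loop, and for every other node $x$ replacing the (at most two) edges between $\set{a,b}$ and $x$ by a single edge between $ab$ and $x$ --- is well-defined on labelings: set $\lab{G\p}{ab} = \lab G a$, and for a standard component $H$ set $\lab{\phi(H)}{ab} = \lab H a = \lab H b$. I would verify that the reachability preorder of $G\p$ is the one obtained from that of $G$ by identifying $a$ with $b$, so that the edge-compatibility condition $\lab{}{} \leq \lab{}{}$ and the standardness of the difference $\gsub{G\p}{\phi(H)}$ hold exactly when the corresponding statements hold for $\gsub G H$. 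The reverse map splits $ab$ into $a,b$ with the common label and duplicates the rerouted edges; the two maps are mutually inverse, and $\phi$ is again additive, inducing a bijection on standard decompositions.

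Finally I would assemble the pieces: apply the first reduction to delete all zero nodes, then repeatedly apply the second to contract equal-label edges. Each step strictly decreases the number of nodes, so the process terminates, and the composite of the intermediate bijections is the required correspondence between standard decompositions (and components) of $G$ and of the terminal graph $G\p$. That graph is standard, has only positive labels (after the first reduction), and has all edges strictly increasing in label --- hence no cycles, since a cycle would force equal labels around it --- so $G\p$ is canonical in the sense of Definition \ref{dfn:canlabeledGraph}. I expect the main obstacle to lie in the bookkeeping of the contraction step: confirming that the edge rerouting transports the reachability preorder faithfully, so that the standardness conditions on both sides match up, and that the loops and parallel edges created by the merge may be discarded without changing which $0$-$1$ graphs are standard components. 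The non-negativity checks in the first reduction and the additivity of both maps are routine by comparison.
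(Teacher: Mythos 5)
Your proposal is correct and follows essentially the same route as the paper: the paper derives this proposition from its preceding discussion, which likewise deletes zero-labeled nodes, contracts equal-label edges (using the same key fact that $\lab H a = 1$ if and only if $\lab H b = 1$ for any standard component $H$ and equal-label edge $(a,b)$), and observes that cycles disappear because labels along a cycle are forced to be equal. Your write-up merely makes explicit the bijections and additivity checks that the paper leaves implicit.
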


% \begin{proof}
%   We replace pair of nodes $a,b$ such that $(a,b) \in \nodes G$ by one node $ab$; 
%   each edge with source either $a$ or $b$ by an edge with source $ab$ and the same target as before; 
%   and each edge with target either $a$ or $b$ by an edge with target $ab$ and the same source as before. 
%   Moreover, we eliminate all loops from the graph thus obtained. 
%   The result is the graph $G\p$. 
% \end{proof}

We call the process of
replacing $G$ with $G\p$ \emph{canonicalization}. This notion of
canonicalization is not to be confused with the usual notion of graph
canonicalization, which has to do with isomorphism classes of graphs.

Canonicalizations of graphs are useful to speed-up computer programs.
We will return to the topic of
canonicalization in Section \ref{sec:canonicalization}.

%%%%%%%%%%%%

\section{Standard node decompositions}

We now turn to the topic of the computational complexity of the
problem of computing standard decompositions.  We start with a simple
instructive example.

\begin{ex}
\phantomsection\label{ex:bigOutput}
Let $G_n$ be the labeled graph defined by
\[
\nodes {G_n}\defeq\set{y,x_1,\ldots,x_n},\quad\quad
\edges {G_n}\defeq\set{(x_1,y),\ldots,(x_n,y)},
\]
and $\lab {G_n}{x_i}\defeq 1\text{ for }i=1,\ldots,n$, while $\lab
{G_n} y\defeq 2$. There are $2^n$ standard components of $G_n$, 
corresponding to the $n$ independent choices of whether to include or
exclude each $x_i$. The standard decompositions of $G_n$ are pairs of
standard components that include complementary subsets of
$\set{x_1,\ldots,x_n}$. So $G_n$ has $2^{n-1}$ standard decompositions
while having only $n+1$ nodes.
\end{ex}

Consider the computational problem whose input is a labeled graph $G$
and whose output is the set of standard decompositions $\decom G$.
Recall that $\decom G \neq \emptyset$ if, and only if, $G$ is standard
--- however, we will formulate our statements for arbitrary labeled
graphs, thus covering also the case where the output is the empty set.
Example \ref{ex:bigOutput} shows that this computation cannot be done
in time better than exponential in the worst case since just writing
down the output can take exponential time. For problems such as this,
it is standard practice to consider an alternative notion of
complexity, \emph{generating complexity}, in which we consider the
running time as a function of the combined size of the input
\emph{and} the output.

We present an algorithm for standard decomposition of graphs that runs
in polynomial time in the combined size of input and output. This
algorithm is based on the following notion of decomposing a single
node of a standard graph.

\begin{dfn}[Standard node decomposition]
\phantomsection\label{dfn:stdNodeDecomposition}
Let $G$ be a labeled graph and let $v$ be a node of $G$. A multiset of
standard graphs $\hsym$ is a \emph{standard $v$-decomposition} of $G$
if
\begin{enumerate}
\item each $H\in\hsym$ is a standard component of $G$,
\item $\lab H v=1$ for all $H\in\hsym$,
\item $\gsub G {\sum\hsym}$ is standard,
\item $\card H=\lab G v$.
\end{enumerate}
We denote the set of standard $v$-decompositions of $G$ by $\decomn G v$.
\end{dfn}

Consider a standard graph $G$ with a standard decomposition $\hsym$
and a node $v$ of $G$. The submultiset of $\hsym$ whose elements give
$v$ a label of 1 forms a standard $v$-decomposition of $G$. Another
way of characterizing a standard $v$-decomposition is that it is a
minimal multiset $\hsym$ of standard components of $G$ such that
$\gsub G \sum\hsym$ gives $v$ the label $0$ and such that $\hsym$ can
be extended to a standard decomposition of $G$.

Every standard graph has at least one standard decomposition, so
Proposition \ref{pro:nodeDecom} implies that if we can generate
standard $v$-decompositions in polynomial time, then we can also
generate standard decompositions in polynomial time.

\begin{pro}
\phantomsection\label{pro:nodeDecom}
Let $v$ be a node of a labeled graph $G$. Then
\[
\decom G=
%\setBuilderBarLeft{\decom{\gsub G \sum\hsym}\cup\hsym}{\hsym\in\decomn G v},
\setBuilderBarRight{\hsym \cup \hsym\p}{\hsym\in\decomn G v, \hsym\p \in \decom{\gsub G \sum\hsym}},
\]
where no decomposition appears twice on the right hand side.
\end{pro}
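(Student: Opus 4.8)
The plan is to prove the set equality by establishing both inclusions, and then separately verify the no-repeats clause. The statement asserts that every standard decomposition $\hsym$ of $G$ factors uniquely as a union $\hsym_v \cup \hsym\p$, where $\hsym_v$ is the submultiset of components giving $v$ label $1$ (a standard $v$-decomposition) and $\hsym\p$ decomposes the remaining graph $\gsub G \sum\hsym_v$.

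\textbf{The $\supseteq$ inclusion.} Suppose $\hsym\in\decomn G v$ and $\hsym\p\in\decom{\gsub G \sum\hsym}$. I must show $\hsym\cup\hsym\p\in\decom G$. First, each $H\in\hsym$ is a standard component of $G$ by definition of a $v$-decomposition. Each $H\p\in\hsym\p$ is a standard component of $\gsub G \sum\hsym$; I would check it is also a standard component of $G$, which requires that $\gsub G {H\p}$ is standard --- this follows because $\gsub G \sum\hsym$ is standard (condition (3) of Definition \ref{dfn:stdNodeDecomposition}) and subtracting a single standard component from it stays standard, combined with the fact that the labels of $G$ dominate those of $\gsub G \sum\hsym$ on every node. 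Summing, $\sum(\hsym\cup\hsym\p)=\sum\hsym + \sum\hsym\p = \sum\hsym + \gsub G \sum\hsym = G$, so $\hsym\cup\hsym\p$ is a standard decomposition of $G$.

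\textbf{The $\subseteq$ inclusion.} Given $\hsym\pp\in\decom G$, set $\hsym\defeq\setBuilder{H\in\hsym\pp}{\lab H v=1}$, counted with multiplicity, and $\hsym\p\defeq\hsym\pp\setminus\hsym$. I would verify $\hsym\in\decomn G v$ by checking the four conditions: (1) each member is a standard component of $G$; (2) holds by construction; (3) $\gsub G \sum\hsym$ is standard, which follows from Proposition \ref{pro:extend} since $\hsym\subseteq\hsym\pp$ and $\hsym\pp$ is a standard decomposition; and (4) $\card\hsym=\lab G v$, because the components in $\hsym\pp$ contribute either $0$ or $1$ to the label at $v$, and their labels sum to $\lab G v$, so exactly $\lab G v$ of them carry label $1$ at $v$. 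Then every $H\p\in\hsym\p$ gives $v$ label $0$, and $\sum\hsym\p=\gsub G \sum\hsym$; I would confirm each such $H\p$ is a standard component of $\gsub G \sum\hsym$ and hence $\hsym\p\in\decom{\gsub G \sum\hsym}$.

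\textbf{The uniqueness clause.} Finally I would argue no decomposition of $G$ appears twice on the right. The key point is that the splitting $\hsym\pp\mapsto(\hsym,\hsym\p)$ constructed above is the inverse of the union map: given $\hsym\cup\hsym\p$ with $\hsym\in\decomn G v$ and $\hsym\p\in\decom{\gsub G \sum\hsym}$, the members of $\hsym$ are \emph{exactly} those components assigning $v$ label $1$ (by condition (2)), while every member of $\hsym\p$ assigns $v$ label $0$ (since $\gsub G \sum\hsym$ has label $0$ at $v$ by condition (4), and a standard component of a graph with label $0$ at $v$ must itself have label $0$ at $v$). Thus both $\hsym$ and $\hsym\p$ are recovered from the union, so distinct pairs yield distinct multisets. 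I expect this recovery step to be the main obstacle, since it hinges on the precise bookkeeping that condition (4) forces $\gsub G \sum\hsym$ to have label exactly $0$ at $v$, cleanly separating the two parts by their label at $v$.
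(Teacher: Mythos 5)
Your overall architecture matches the paper's: split a decomposition of $G$ into the submultiset of components giving $v$ the label $1$ (a standard $v$-decomposition) and the rest, use Proposition \ref{pro:extend} for condition (3), and rule out duplicates by recovering the pair from the union via the labels at $v$ (your uniqueness argument is correct and is essentially the paper's). The difference is that the paper isolates the two component-transfer steps into a separate lemma (Lemma \ref{lem:partDecom}), whereas you inline them --- and this is where your proof has a genuine gap.

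In the $\supseteq$ direction you justify that $\gsub G {H\p}$ is standard by noting that $\gsub{(\gsub G {\sum\hsym})}{H\p}$ is standard and that ``the labels of $G$ dominate those of $\gsub G {\sum\hsym}$ on every node.'' Domination is not a sufficient reason: it yields non-negativity of the labels of $\gsub G {H\p}$, but not compatibility with the edge relation. Concretely, take two nodes $a,b$ with the single edge $(a,b)$; let $F$ label $a,b$ by $0,1$, let $H\p$ label them by $0,1$, and let $G$ label them by $2,2$. Then $F$ and $\gsub F {H\p}$ are standard, $G$ is standard and dominates $F$, yet $\gsub G {H\p}$ has labels $2,1$ and is not standard. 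What rescues the step in your actual situation is a fact you never invoke: $G$ differs from $\gsub G{\sum\hsym}$ by $\sum\hsym$, which is itself a sum of standard graphs. That is, one needs the identity
\[
\gsub G {H\p}=\gadd{\bigl(\gsub{(\gsub G {\sum\hsym})}{H\p}\bigr)}{\sum\hsym}
\]
together with the observation that a sum of standard graphs with identical node and edge sets is standard --- exactly the computation in the paper's Lemma \ref{lem:partDecom}. The same additivity argument is also what is required for the step you defer in the $\subseteq$ direction (``I would confirm each such $H\p$ is a standard component of $\gsub G {\sum\hsym}$''): there one writes $\gsub{(\gsub G{\sum\hsym})}{H\p}$ as the sum of the all-zero graph $\gsub G{\sum\hsym\pp}$ and the remaining members of $\hsym\pp$. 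With these two repairs your proof is complete.
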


\begin{proof}
\proofPart{\subseteq} Let $D\in\decom G$ and let $\hsym$ be the
submultiset of $D$ whose elements give $v$ the label 1. Then
$\hsym\in\decomn G v$. It only remains to prove that
$D\setminus\hsym\in\decom{\gsub G \sum\hsym}$, which follows from Lemma
\ref{lem:partDecom} below.

\proofPart{\supseteq} Let $\hsym\in\decomn G v$ and let
$\hsym\p\in\decom{\gsub G \sum\hsym}$. Then $\hsym\p\cup\hsym$ is a standard
decomposition of $G$ by Lemma \ref{lem:partDecom}.

\proofPart{\text{no duplicates}} Let $\hsym,\hsym\pp\in\decomn G v$
such that $\hsym\neq\hsym\pp$. Let $A\in\decom{\gsub G \sum\hsym}$ and
$A\pp\in\decom{\gsub G \sum\hsym\pp}$. Then $\hsym\cup A\neq\hsym\pp\cup A\pp$ since
$\hsym\neq\hsym\pp$ and $A\cup A\pp$ is disjoint from $\hsym\cup\hsym\pp$, 
as the elements of $A\cup A\pp$ give $v$ the label zero, while the
elements of $\hsym\cup\hsym\pp$ give $v$ the label 1.
\end{proof}

\begin{lmm}
\phantomsection\label{lem:partDecom}
Let $G$ be a labeled graph. Let $A$ be a multiset of standard 0-1
subgraphs of $G$ and let $B$ be a submultiset of $A$. Then $A$ is a
standard decomposition of $G$ if, and only if, $A\setminus B$ is a
standard decomposition of $\gsub G \sum B$.
\end{lmm}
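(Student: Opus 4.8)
The plan is to reduce both implications to a single structural fact already recorded in the paper, namely that a sum of standard graphs sharing one common node set and edge set is again standard (extended to finite multisets by induction). Throughout I treat every graph in sight as having node set $\nodes G$ and edge set $\edges G$, so that $\oplus$ and $\ominus$ act label-wise and the multiset identity
\[
\gsub G {\sum C}=\sum (A\setminus C)\qquad\text{holds whenever }\sum A=G,
\]
for every submultiset $C\subseteq A$; this is immediate from $\lab G v=\sum_{H\in A}\lab H v$ at each node $v$. The crux is that a difference of the form $\gsub G {\sum C}$ is secretly a \emph{sum} of standard $0$-$1$ graphs, hence standard.

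For the forward implication I would assume $A\in\decom G$ and verify the three defining conditions for $A\setminus B$ to lie in $\decom{\gsub G {\sum B}}$. The sum condition is the identity above with $C=B$, giving $\sum(A\setminus B)=\gsub G {\sum B}$. Each $H\in A\setminus B$ is a standard $0$-$1$ graph because it belongs to $A$, and it is nonzero because it is a standard component of $G$. The only condition needing work is that $\gsub{(\gsub G {\sum B})}H$ be standard; I would rewrite it as $\gsub G {\sum(B\cup\set H)}=\sum\big(A\setminus(B\cup\set H)\big)$ and invoke the sum-of-standards fact, using that $B\cup\set H$ is a submultiset of $A$.

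For the reverse implication I would assume $A\setminus B\in\decom{\gsub G {\sum B}}$. First the sum condition $\sum A=\gadd{(\gsub G {\sum B})}{\sum B}=G$ follows by adding $\sum B$ back. Then, for each $H\in A$, the difference $\gsub G H=\sum(A\setminus\set H)$ is again a sum of standard $0$-$1$ graphs and hence standard, which is exactly condition (2) for $H$ to be a standard component of $G$; condition (1) holds since $H\in A$.

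The genuinely delicate point, and the main thing to get right, is condition (3) in the reverse direction: that no summand of $A$ is the all-zero graph. For $H\in A\setminus B$ this is inherited from its being a standard component of $\gsub G {\sum B}$, but for $H\in B$ it is not forced by the hypothesis $A\setminus B\in\decom{\gsub G {\sum B}}$—indeed, an all-zero summand in $B$ leaves both $\gsub G {\sum B}$ and $A\setminus B$ untouched while spoiling $A$. I would therefore read the hypothesis ``$A$ is a multiset of standard $0$-$1$ subgraphs'' as tacitly excluding the all-zero graph, which supplies the nonzero condition for the elements of $B$ as well and closes the argument.
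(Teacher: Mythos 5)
Your proof is correct and follows essentially the same route as the paper's: both directions reduce to the fact that a sum of standard graphs with identical node and edge sets is standard, by rewriting the relevant differences $\gsub G a$ and $\gsub {(\gsub G {\sum B})} a$ as sums of the 0-1 graphs in $A$. Beyond that, your last paragraph catches a real imprecision that the paper's own proof glosses over: in the direction where one assumes $A\setminus B\in\decom{\gsub G {\sum B}}$, the paper argues for \emph{every} $a\in A$ that ``we already know that $a$ is a standard component of $\gsub G {\sum B}$,'' which is justified only for $a\in A\setminus B$. For $a\in B$ the standardness of $\gsub G a$ can still be recovered exactly as you do, by writing it as $\sum(A\setminus\set a)$, but the nonvanishing condition (3) genuinely can fail: take $G$ a single node of label $1$, let $Z$ be the all-zero graph on that node, and set $A=\set{G,Z}$, $B=\set Z$; then $A\setminus B=\set G\in\decom{\gsub G {\sum B}}=\decom G$, yet $A\notin\decom G$ since $Z$ is not a standard component. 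So your convention that the elements of $A$ are tacitly nonzero (equivalently, that $B$ contains no all-zero graph) is exactly the repair the statement needs, and it is harmless in the one place the lemma is used, Proposition \ref{pro:nodeDecom}, where all the multisets involved consist of standard components.
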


\begin{proof}
 \proofPart{\text{if}} Assume that $A\setminus B$ is a standard
 decomposition of $\gsub G \sum B$. Then $\gsub G \sum
 B=\sum(A\setminus B)$ so $G=\sum A$. It only remains to prove that
 each $a\in A$ is a standard component of $G$. To prove that, we need
 to show that $\gsub G a$ is standard. We already know that $a$ is a
 standard component of $\gsub G \sum B$, which implies that $\gsub G
 {\gsub {\sum B} a}$ is standard.  Then $\gsub G a=\gadd {(\gsub G
   {\gsub {\sum B} a})} \sum B$ is standard, as it is a a sum of
 standard graphs with identical node sets.

 \proofPart{\text{only if}} Assume that $A$ is a standard decomposition
 of $G$. Then $G=\sum A$ so $\gsub G \sum B=\sum(A\setminus B)$. It
 only remains to prove that each $a\in A\setminus B$ is a standard
 component of $\gsub G \sum B$. To prove that we need to show that
 $\gsub G {\gsub {\sum B} a}$ is standard. We already know that $\gsub
 G \sum A$ has all labels zero, so it is standard.  Then $\gsub G
 {\gsub {\sum B} a}=\gadd {(\gsub G \sum A)} \sum(A\setminus(B\cup\set
 a)))$, so $\gsub G {\gsub {\sum B} a}$ is standard, as it is a sum of
 standard graphs with identical node sets.
\end{proof}

%%%%%%%%%%%%

\section{Generating standard node decompositions}
\phantomsection\label{sec:nodeDecom}

Proposition \ref{pro:nodeDecom} reduces the problem of generating
$\decom G$ in polynomial time to the problem of generating the
standard node decomposition $\decomn G v$ in polynomial time for some
freely chosen node $v$ of $G$. In this section we investigate this
problem. Our solution is based on choosing the right node $v$ to
decompose.

Consider the set of all standard components of $G$ that give $v$ the
label 1.  We impose an ordering, $H_1,\ldots,H_k$, on the elements of
that set.  This ordering can be chosen arbitrarily, but is fixed once
and for all.  Now let $F$ be any labeled subgraph of $G$.
For each such $F$ and each $i = 1, \ldots, k$, we define
\[
\tau(F,i)\defeq\setBuilder{\hsym\subseteq\set{H_1,\ldots,H_i}}{\hsym \in \decomn
  F v}
\]
Note that $\decomn F v$ and $\tau(F,i)$ are both sets of multisets,
thus the condition ${\hsym \in \decomn
  F v}$ for a multiset $\hsym$ makes sense; 
If we can compute $\tau(F,i)$ in general then we can also compute
$\decomn G v$ since $\tau(G,k)=\decomn G v$. In order to compute
$\tau(F,i)$, consider the recursive formula
\begin{equation}
\phantomsection\label{eqn:nodeDecomRecursive}
\tau(F,i)=
\begin{cases}
\set{\emptyset} \text{ if all labels of $F$ are zero, else}\\
\emptyset \text{ if $F$ is not standard or $i=0$, else}\\
\tau(F,i-1)\cup
  \setBuilder{\hsym\cup\set{H_i}}{\hsym\in\tau(\gsub F H_i,i)}.
\end{cases}
\end{equation}
This way of writing $\tau$ immediately suggests an algorithm based on
recursively evaluating the expression.  It is a problem with this
approach that this algorithm can spend a large amount of computational
steps to determine that $\tau(F,i)$ is empty. This is an obstacle to
proving that this algorithm generates its output in polynomial time.

We say that a pair $(F,i)$ is \emph{relevant} if
$\tau(F,i)\neq\emptyset$, and \emph{irrelevant} otherwise.\footnote{In particular,
we see that it suffices to consider graphs such that $0 \leq \lab F w \leq \lab G w$ for all nodes $w$, 
since $(F,i)$ is irrelevant otherwise.}
For making the algorithm generate its output in polynomial time, we
need a criterion for detecting irrelevant pairs.  We can use such a
criterion to quickly eliminate irrelevant pairs in the algorithm.

\begin{pro}
\phantomsection\label{pro:relevantCriterion}
Let $v$ be a node of minimal positive label in a labeled graph
$G$. Let $\hsym$ be a multiset of standard components of $G$ that give
$v$ the label 1. Let $H$ be the maximal standard component of $G$.
Assume that $\gsub G {\sum\hsym}$ is standard. Let $\hsym\p$ be the union of
$\hsym$ and the multiset containing $\lab G v-\card\hsym$ copies of
$H$. Then $\hsym\p$ is a standard $v$-decomposition of $G$.
\end{pro}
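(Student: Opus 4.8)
The plan is to verify directly that $\hsym\p$ meets all four conditions of Definition \ref{dfn:stdNodeDecomposition}. Write $m\defeq\lab G v$ and $k\defeq\card\hsym$. Since $\gsub G {\sum\hsym}$ is standard and every element of $\hsym$ gives $v$ the label $1$, the label of $v$ in $\gsub G {\sum\hsym}$ equals $m-k$, which must be non-negative; hence $m-k\geq 0$ and the multiset of $m-k$ copies of $H$ is well defined. Conditions $(1)$, $(2)$ and $(4)$ are then immediate. Each element of $\hsym\p$ is a standard component of $G$: the elements of $\hsym$ by hypothesis, and $H$ because it is the maximal standard component of $G$ and $\lab H v=1\neq 0$, so not all its labels vanish. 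Each element gives $v$ the label $1$: the elements of $\hsym$ by hypothesis, and each copy of $H$ because $\lab G v>0$ forces $\lab H v=1$. Finally $\card{\hsym\p}=k+(m-k)=m=\lab G v$.

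The substance of the proof is condition $(3)$, that $\gsub G {\sum\hsym\p}$ is standard. I would set $G_1\defeq\gsub G {\sum\hsym}$, which is standard by hypothesis, and note that $\gsub G {\sum\hsym\p}=\gsub {G_1}{(m-k)H}$, so it suffices to show that subtracting $m-k$ copies of $H$ from $G_1$ preserves standardness. The key estimate, and the only place the minimality of $\lab G v$ enters, is that $\lab {G_1} w\geq m-k$ for every node $w$ with $\lab G w>0$. Indeed, minimality gives $\lab G w\geq m$ for every such $w$, while $\sum\hsym$, being a sum of $k$ many $0$-$1$ graphs, lowers the label of $w$ by at most $k$; subtracting yields $\lab {G_1} w\geq m-k$. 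For a node $w$ with $\lab G w=0$ one has $\lab H w=0$ and $\lab {G_1} w=0$, since no standard component can give a zero-label node a positive label.

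With this estimate, standardness of $\gsub {G_1}{(m-k)H}$ follows from a short case analysis, recalling that $\lab H w=1$ precisely on the nodes of positive $G$-label. Non-negativity is immediate: on nodes of positive $G$-label the label drops by $m-k$ but stays at least $0$ by the estimate, and elsewhere it is unchanged at $0$. For compatibility, consider an edge $(a,b)$. Since $G$ is standard, $\lab G a>0$ forces $\lab G b\geq\lab G a>0$, so the configuration $\lab H a=1$, $\lab H b=0$ cannot occur; in every other configuration the inequality $\lab {G_1} a\leq\lab {G_1} b$ is either preserved verbatim (both endpoints of positive $G$-label drop by $m-k$, or both have label $0$) or reduces to the non-negativity of $\lab {G_1} b-(m-k)$ (when $\lab G a=0<\lab G b$), which is again the estimate. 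I expect the edge-compatibility step --- in particular recognizing that minimality of $\lab G v$ is exactly what supplies the lower bound needed in the mixed case --- to be the only real obstacle; the remaining verifications are bookkeeping.
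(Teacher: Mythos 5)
Your proof is correct, but it follows a genuinely different route from the paper's. The paper never checks the conditions of Definition \ref{dfn:stdNodeDecomposition} by hand: it applies the constructive proof of Proposition \ref{pro:extend} to $\hsym$, greedily extending it to a full standard decomposition $\hsym\pp$ of $G$ by repeatedly splitting off the maximal standard component of the remainder. Minimality of $\lab G v$ enters there exactly where it enters your estimate: during the first $\lab G v-\card\hsym$ rounds every node of positive label still has positive label, so the component split off in each round is $H$ itself, and afterwards $v$ has label zero; hence the sub-multiset of $\hsym\pp$ giving $v$ the label $1$ is precisely $\hsym\p$, which by the remark following Definition \ref{dfn:stdNodeDecomposition} makes $\hsym\p$ a standard $v$-decomposition. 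Your argument trades this reduction for a self-contained verification whose core is the static bound $\lab {\gsub G {\sum\hsym}} w \geq \lab G v - \card\hsym$ on nodes of positive label, plus the edge case analysis --- the same use of minimality, packaged as an estimate rather than as an invariant of the greedy construction. The paper's route buys brevity (condition (3) is automatic for a sub-multiset of a full decomposition) and produces the explicit completion $\hsym\pp$, which is essentially the form in which the result is reused in Proposition \ref{pro:relevantCriterion>0}; your route buys independence from the internals of Proposition \ref{pro:extend}'s proof. One point you should make explicit: the proposition assumes only that $G$ is a labeled graph, so before invoking standardness of $G$ in the edge analysis (and when asserting that the maximal standard component $H$ is in fact a standard component) you should note that $G=\gadd{\left(\gsub G {\sum\hsym}\right)}{\sum\hsym}$ is a sum of standard graphs with identical nodes and edges, hence standard.
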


\begin{proof}
Upon applying the proof of Proposition \ref{pro:extend} to $\hsym$, 
we obtain a standard decomposition $\hsym\pp\supseteq\hsym$ of $G$. 
Since the label of $v$ is minimal among all positive labels appearing in $G$, 
the first $\lab G v-\card\hsym$ rounds of the
inductive construction in that proof will use the same maximal
standard component $H$. After that the label of $v$ has become zero,
so the maximal standard components used in later rounds of the
construction will give $v$ the label zero. So the subset of
$\hsym\pp$ that gives $v$ the label 1 is precisely $\hsym\p$, which
implies that $\hsym\p$ is a standard $v$-decomposition of $G$.
\end{proof}

Through choosing wisely the node $v$ and the order of the standard
components $H_1,\ldots,H_k$, Proposition \ref{pro:relevantCriterion>0}
gives an if-and-only-if criterion for detecting irrelevant pairs.

\begin{pro}
\phantomsection\label{pro:relevantCriterion>0}
Given a labeled graph $G$, choose $v$ to be a node of minimal positive
label, and choose an order on the standard components $H_1,\ldots,H_k$
giving $v$ the label 1 such that $H_1$ is the maximal standard
component of $G$.  Let $\hsym$ be a multiset whose elements are chosen
among the standard components $H_i$ of $G$, and let $F\defeq\gsub G
{\sum\hsym}$.
 Then a pair $(F,i)$ with $1\leq i\leq k$ is
relevant if, and only if, $F$ is standard.
\end{pro}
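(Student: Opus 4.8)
The plan is to unwind the definition of relevance and treat the two implications separately, with the forward implication (\emph{if $F$ is standard then $(F,i)$ is relevant}) carrying essentially all the content. Recall that $(F,i)$ is relevant exactly when some submultiset of $\set{H_1,\ldots,H_i}$ lies in $\decomn F v$; in particular relevance of $(F,i)$ forces $\decomn F v\neq\emptyset$, and conversely exhibiting a single standard $v$-decomposition of $F$ built from copies of $H_1$ alone will establish relevance for every $i\geq 1$ at once.

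For the \textbf{only if} direction I would argue that a nonempty $\decomn F v$ witnesses that $F$ is standard. Indeed, if $\hsym\in\decomn F v$ then each element of $\hsym$ is a standard component of $F$ and $\gsub F {\sum\hsym}$ is standard by the definition of a standard $v$-decomposition; Proposition \ref{pro:extend} then extends $\hsym$ to a full standard decomposition of $F$, and a labeled graph that admits a standard decomposition is standard. This direction uses neither the choice of $v$ nor the ordering of the $H_j$.

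The \textbf{if} direction is where the hypotheses on $v$ and on $H_1$ are used. Assume $F=\gsub G {\sum\hsym}$ is standard; I claim that the multiset consisting of $\lab F v$ copies of $H_1$ is a standard $v$-decomposition of $F$, which immediately gives $\tau(F,i)\neq\emptyset$ since this multiset is a submultiset of $\set{H_1}\subseteq\set{H_1,\ldots,H_i}$. When $\lab F v=0$ the empty multiset already works, so the interesting case is $\lab F v>0$. The key observation — and the step I expect to be the crux — is that in this case $F$ and $G$ have the same set of positively labeled nodes. To see this, note first that $\lab F w\leq\lab G w$ for every node $w$, so the support of $F$ is contained in that of $G$. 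For the reverse inclusion, suppose some node $w$ had $\lab G w>0$ but $\lab F w=0$; since each component in $\hsym$ gives $v$ the label $1$, writing $m\defeq\card\hsym$ we have $\lab F v=\lab G v-m>0$, hence $m<\lab G v$, while the number of elements of $\hsym$ giving $w$ the label $1$ is at most $m$ yet must equal $\lab G w$ in order to reduce $\lab G w$ to $0$, forcing $\lab G w\leq m<\lab G v$. This contradicts the minimality of the positive label $\lab G v$, so no such $w$ exists and the two supports coincide.

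Given the equality of supports, the maximal standard component of $F$ has label $1$ precisely on the nodes with $\lab G w>0$, so it coincides with $H_1$; and the same label count shows $\lab F w\geq\lab G w-m\geq\lab G v-m=\lab F v$ for every node $w$ in the common support, so $v$ is a node of minimal positive label in $F$ as well. With these two facts in hand I would apply Proposition \ref{pro:relevantCriterion} to $F$ with the empty multiset: its maximal standard component is $H_1$, and padding the empty multiset with $\lab F v$ copies of $H_1$ yields a standard $v$-decomposition of $F$, completing the argument.
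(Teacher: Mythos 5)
Your proof is correct; both directions go through as written. It rests on the same key lemma as the paper's proof --- Proposition \ref{pro:relevantCriterion} --- but you apply it at a different place, and that changes the shape of the argument. The paper applies the criterion to the pair $(G,\hsym)$: since $F=\gsub G{\sum\hsym}$ is standard, $\hsym$ padded with $\lab G v-\card\hsym$ copies of $H_1$ is a standard $v$-decomposition of $G$; extending it to a full standard decomposition $G=\sum\hsym\oplus\sum\hsym_1\oplus\sum\hsym_2$ (where $\hsym_1$ consists of copies of $H_1$ and $\hsym_2$ of components giving $v$ the label $0$) and then subtracting $\sum\hsym$ exhibits $\hsym_1\in\tau(F,i)$ at once, with no need to establish anything intrinsic about $F$. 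You instead apply the criterion to $F$ itself with the empty multiset, which obliges you to prove that its hypotheses transfer from $G$ to $F$: that $F$ and $G$ have the same positively labeled nodes when $\lab F v>0$, hence that $v$ still has minimal positive label in $F$ and that the maximal standard component of $F$ equals $H_1$. That support-stability argument is correct and is genuine added content; it is precisely the phenomenon the paper keeps hidden inside the proof of Proposition \ref{pro:relevantCriterion} (it is why the first $\lab G v-\card\hsym$ rounds of the greedy construction there reuse the same maximal component). So the paper's route is shorter, while yours is more self-contained and makes explicit why the recursion can always fall back on copies of $H_1$. Your ``only if'' direction, via Proposition \ref{pro:extend}, correctly fills in what the paper dismisses as obvious. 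One notational caveat: your multiset of $\lab F v$ copies of $H_1$ is a submultiset of $\set{H_1,\ldots,H_i}$ only in the paper's sense that its elements are chosen among $H_1,\ldots,H_i$; since that is the intended reading of the definition of $\tau$ (and of the proposition itself), this is harmless.
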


\begin{proof} 
\proofPart{\text{if}} Assume that $F$ is standard. 
By Proposition  \ref{pro:relevantCriterion}, $G=\sum \hsym \oplus \sum
\hsym_1 \oplus \sum \hsym_2$, where $\hsym_1$ is a multiset containing
copies of $H_1$ and $\hsym_2$ is a multiset containing standard
components of $G$ with label $0$ on $v$. Thus $F= \sum
\hsym_1 \oplus \sum \hsym_2$, and $\tau(F,i)$ is not empty. 

\proofPart{\text{only if}} This part is obvious. 
\end{proof}

%%%%%%%%%%%%

\section{Generating standard decompositions in polynomial time}

Based on the previous two sections, we can now present an algorithm
for generating standard decompositions and prove that it runs in
polynomial time.

\begin{thm}
\phantomsection\label{thm:algorithm}
The algorithm in Figure \ref{figure:alg} generates the standard
decompositions of a labeled graph in polynomial time.
\end{thm}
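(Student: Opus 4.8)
The plan is to prove two separate claims about the algorithm: that it is \emph{correct}, meaning it enumerates exactly the elements of $\decom G$ and lists each one once, and that it runs in time polynomial in the combined size of the input graph $G$ and the output $\decom G$. For correctness I would read the algorithm as a faithful implementation of the recursive description of $\decom G$ supplied by Proposition \ref{pro:nodeDecom}. Having fixed a node $v$ of minimal positive label, the algorithm produces the standard $v$-decompositions $\decomn G v$ and then, for each $\hsym\in\decomn G v$, recurses on the strictly smaller graph $\gsub G {\sum\hsym}$; Proposition \ref{pro:nodeDecom} guarantees both that this yields every element of $\decom G$ and that its ``no duplicates'' clause prevents repetitions. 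The generation of $\decomn G v$ is in turn the evaluation of the recursion (\ref{eqn:nodeDecomRecursive}), so the only remaining correctness obligation is the identity $\tau(G,k)=\decomn G v$, which I would verify by induction on $i$ straight from the three cases of (\ref{eqn:nodeDecomRecursive}), with Lemma \ref{lem:partDecom} supplying the gluing between the inner enumeration and the outer recursion.

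For the running time I would regard the entire computation as a rooted tree whose nodes are the recursive calls, and bound three quantities whose product controls the total time: the work spent at a single node, the depth of the tree, and the number of nodes. The per-node work is clearly polynomial in the size of $G$, since a node only forms a graph of the form $\gsub F {H_i}$, tests standardness, and compares a few labels. The depth is polynomial because each downward step either decrements the index $i$ or subtracts a standard component, and the latter strictly decreases the label sum $\sum_w \lab F w$; both $k$ and this sum are bounded by polynomials in the input.

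The decisive point is to bound the number of recursive calls, and here the relevance criterion is what prevents a combinatorial explosion. Because $v$ has minimal positive label and $H_1$ is the maximal standard component, Proposition \ref{pro:relevantCriterion>0} identifies the relevant pairs $(F,i)$ as exactly those with $F$ standard, a condition that can be tested cheaply. I would have the algorithm apply this test before descending into either branch, so that every call it makes is relevant and hence, by Proposition \ref{pro:extend} and Corollary \ref{cor:compExtend}, extends to at least one genuine output; moreover distinct relevant partial decompositions extend to disjoint families of outputs, so the leaves of the explored tree are in number at most $\card{\decom G}$. A tree all of whose leaves are outputs can still carry long non-branching chains, but the number of branching nodes is at most the number of leaves and each maximal chain is no longer than the depth; hence the number of explored nodes is at most the output size times the polynomial depth, and multiplying by the per-node work gives the desired polynomial bound.

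The step I expect to be the main obstacle is exactly this counting, specifically ensuring that the relevance test kills \emph{all} unproductive work. The delicate cases are the index-decrementing calls $(F,i)\mapsto(F,i-1)$, which leave $F$ untouched and therefore neither shrink the graph nor, by themselves, produce output; one must check that such calls form chains that are charged against the polynomial depth rather than multiplied against the output, and that no relevant call is entered that fails to lie on a root-to-output path. Once this charging is made precise---attributing each explored call either to a distinct output or to a bounded-length segment of some path leading to one---the polynomial bound follows by combining the three polynomial factors.
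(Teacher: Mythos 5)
Your overall route is the paper's own: correctness by reading the pseudocode as a direct implementation of Proposition \ref{pro:nodeDecom} and of the recursion \eqref{eqn:nodeDecomRecursive}, and the running-time bound by counting nodes of the recursion tree, with the if-and-only-if relevance criterion of Proposition \ref{pro:relevantCriterion>0} controlling dead ends and every explored call charged to some output. Your ``charging'' device is exactly the paper's mapping of each irrelevant leaf to a relevant leaf under its sibling, and moving the standardness test from callee to caller is only a cosmetic reorganization: in the algorithm as written, a relevant internal node can have at most one irrelevant child (if both children returned $\emptyset$, its own $\tau$-value would be empty), so the two accountings differ by a constant factor.

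The one step that fails as stated is your depth bound: the claim that ``both $k$ and this sum are bounded by polynomials in the input'' is false on both counts. In the paper's machine model each label occupies one word, so a single node carrying the label $L$ is an input of constant size whose {\tt Tau} tree is a chain of $L$ subtraction steps; the label sum is in no way polynomial in the input. Likewise $k$, the number of standard components giving $v$ the label $1$, can be exponential in the number of nodes: in Example \ref{ex:bigOutput} one has $k=2^{n-1}$ on a graph with $n+1$ nodes. Both quantities must instead be bounded against the \emph{output}, which is precisely what the paper does: every standard $v$-decomposition has exactly $l=\lab G v$ elements, so $l$ is at most the output size as soon as $G$ is standard; and $k\leq ld$ by Corollary \ref{cor:compExtend}, since each $H_i$ extends to one of the $d$ node decompositions, each of which has only $l$ slots. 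With the output size being $\Theta(ld+kn)$, the depth $k+l$, and hence your product of three factors, is polynomial in input plus output --- but not in the input alone, so the phrase ``charged against the polynomial depth'' needs this repair to mean anything. A similar caveat applies to your claim that per-node work is polynomial in $\card{\nodes G}+\card{\edges G}$: at a node of the outer recursion, line \ref{line:combine} does work proportional to the number of outputs produced below it, which the paper handles by charging that work to the children. With these two repairs your argument becomes the paper's proof.
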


\begin{figure}
\begin{algorithmic}[1]
  \Function{standardDecompositions}{$G$}
    \If{all labels of all nodes of $G$ are zero}
      \State\Return $\set{\emptyset}$
    \Else
      \State choose a node $v \in \nodes G$ of minimal positive label
      \State $D\gets \textsc{StandardNodeDecompositions}(G,v)$
      \State \Return
      $\setBuilderBarLeft{\hsym \cup \hsym\p}{\hsym\in D,
      \hsym\p \in \textsc{standardDecompositions}(\gsub G {\sum \hsym})}$ \phantomsection\label{line:combine}
    \EndIf
  \EndFunction
  \Function{standardNodeDecompositions}{$G$, $v$}
    \State $H_1 \gets$ the maximal standard component of $G$
    \State $H_2, \ldots, H_k \gets$ all other standard components of $G$ that give $v$ the label 1
    \State $S \gets \set{H_1, \ldots, H_k}$
    \State \Return $\textsc{tau}(G,k,S)$
  \EndFunction
\Function{tau}{$F$, $i$, $S$}
    \If {all labels of $F$ are zero}
      \State \Return $\set \emptyset$
    \Else
      \If {$F$ is not a standard graph} \phantomsection\label{line:disposeOfIrrelevantPairs}
        \State \Return $\emptyset$ 
      \Else
        \State \Return $\textsc{tau}(F,i-1,S) \cup 
        \setBuilder{\hsym\cup\set{H_i}}{\hsym\in\textsc{tau}(\gsub F H_i,i,S)}$
      \EndIf
    \EndIf
  \EndFunction
\end{algorithmic}
\caption{An algorithm for standard decomposition.}
\phantomsection\label{figure:alg}
\end{figure}

The pseudo code for {\tt standardDecompositions} implements the
recursive formula from Proposition \ref{pro:nodeDecom}. The pseudo code for {\tt
  standardNodeDecompositions} implements the recursion from
Section \ref{sec:nodeDecom} where the function {\tt Tau} is $\tau$
from that section. Line \ref{line:disposeOfIrrelevantPairs} eliminates
pairs that are irrelevant according to Proposition
\ref{pro:relevantCriterion>0}.

In reading the pseudo code for {\tt Tau}, note that the first return
is of the value $\set\emptyset$ while the second is of the value
$\emptyset$. Here $\set\emptyset$ is a set containing one
decomposition while $\emptyset$ is a set containing nothing.

\begin{proof}[Proof of Theorem \ref{thm:algorithm} 
and thus also of Theorem \ref{thm:generatingComplexity}]
Recall that \emph{generating output in polynomial time} means that the
algorithm runs in polynomial time in the combined size of input and
output --- this is the meaning of the word ``generate'' in this
context.

The size of the input and output depend on the representation used. We
specify a graph as a list of nodes with labels and a list of edges. We
specify the set of decompositions as a list of standard components
followed by a list of sets that specify a decomposition by referring
back to the list of components. Each standard component is specified
by a bit per node indicating whether that node is an element of the
standard component.

We assume a model where all labels and indices take up one word of
space, rather than the logarithmic number of bits actually necessary
to hold these numbers. The only arithmetic operations we perform is
subtractions $a-b$ where $a>b$ so this assumption does not weaken the
theorem.

\proofPart{\text{{\tt standardNodeDecompositions} is correct}} Suppose
that we call the function {\tt standardNodeDecompositions} on the pair
$(G,v)$. We know that $v$ is a node of minimal positive label in $G$
since {\tt standardDecompositions} always makes calls to {\tt
  standardNodeDecompositions} with such a $v$. Also observe that the
sequence $H_1,\ldots,H_n$ are ordered to satisfy the precondition of
Proposition \ref{line:disposeOfIrrelevantPairs}. We then see that {\tt
  standardNodeDecompositions} computes the correct value $\decomn G v$
since it directly implements the recursive formula from equation
\ref{eqn:nodeDecomRecursive} along with the criterion for irrelevant
pairs from Proposition \ref{pro:relevantCriterion>0}.

\proofPart{\text{{\tt standardNodeDecompositions} is polynomial}} Let
$G$ have $n$ nodes and $e$ edges. We do not give pseudo code for
generating $H_1,\ldots,H_k$, but it is not difficult to do this in
time $O(k(n+e))$ using backtracking. We first need to prove that
$k(n+e)$ is polynomial in the size of the output.

Let $l$ be the label of $v$ in $G$. Every $H_i$ is an element of at
least one standard decomposition of $G$ by Corollary
\ref{cor:compExtend}, and each $v$-decomposition has exactly $l$
elements, so $k\leq ld$ where $d$ is the number of standard
$v$-decompositions of $G$. So computing $H_1,\ldots,H_k$ can be done
in time $O(ld(n+e))$. The size of the input is $\Theta(n+e)$ and the
size of the output is $\Theta(ld+kn)$ since it takes $l$ elements of
$S$ to specify each of the $d$ decompositions and for each irreducible
decomposition we need one bit per node to specify whether it is in the
graph or not. Clearly $ld(n+e)=\Omega(ldn^2)$ is bounded above by a
polynomial in $ld+kn$, so the time to compute $S$ is polynomial.

It remains to prove that ${\tt Tau}$ takes polynomial time. Each
individual call to {\tt Tau}, not counting recursive subcalls, can be
done in time $O(n+e)$. We need an upper bound for the number of
recursive calls.

Consider a tree $T$ where each recursive call to {\tt Tau} is a node
labeled by the parameters $(F,i)$ and where there is an edge from the
caller to the callee. The relevant leaves of $T$ give rise to one
distinct node decomposition per leaf so $d$, the number of
$v$-decompositions of $G$, is also the number of relevant leaves of
$T$. Let $r$ be the number of irrelevant leaves of $T$ --- these do
not give rise to a $v$-decomposition. Since $T$ is a binary tree we
see that there are $r+d-1$ internal nodes in $T$. We need an upper
bound for $r$.

Since Proposition \ref{pro:relevantCriterion>0} is an if-and-only-if
criterion for irrelevant pairs, we see that the sub-tree rooted at any
internal node contains a relevant pair. This implies that the sibling
of an irrelevant leaf $A$ is a root of a sub-tree that contains some
relevant leaf $B$. Let $f$ be the mapping $A\mapsto B$. If $f(A)=B$
then the parent of $A$ is on the path from the root of $T$ to $B$. All
the relevant leaves are at depth $k$ or less, so $f$ can map at most
$k$ irrelevant leaves to each relevant leaf. This implies that $r\leq
dk$.

We have seen that there are $d$ relevant leaves, at most $dk$
irrelevant leaves and therefore also at most $d+dk$ internal nodes in
$T$, which is a total of at most $2d+2dk$ nodes. So the time taken by
all recursive calls to {\tt Tau} is $O(dk(n+e))$. Recall that the
input size is $\Theta(n+e)$ and the output size is
$\Theta(ld+kn)$. Clearly $dk(n+e)$ is dominated by a polynomial in
$(n+e)+(ld+kn)$. This proves that {\tt standardNodeDecompositions} generates
$\decomn G v$ in polynomial time.

\proofPart{\text{{\tt standardDecompositions} is correct}} We have
already done the correctness proof since {\tt standardDecompositions}
directly implements the recursive formula for $\decom G$ from
Proposition \ref{pro:nodeDecom}.

\proofPart{\text{{\tt standardDecompositions} is polynomial}} We have
already seen that each call to {\tt standardNodeDecompositions}
generates its own output in polynomial time. Consider as before a tree
$T$ where each recursive call to {\tt standardDecompositions} is a
node with an edge from the caller to the callee. Let $q$ be the number
of leaves of $T$. Every leaf contributes at least one distinct
decomposition to the output, so $q$ is a lower bound on the number of
decompositions of $G$. The multiset of node decompositions computed by
all the calls to {\tt standardNodeDecompositions} is in bijection with
the edges of $T$. All trees have more nodes than edges and more leaves
than internal nodes so the combined time to compute all the node
decompositions is dominated by a polynomial in $q(n+e)$ where $n+e$ is
the input size for the original input which is an upper bound on the
size of any graph produced during the computation.

Line \ref{line:combine} could a priori seem to require too much time
by going through all the elements of $D$. However, we can charge this
work to each of the children of that node that are produced in this
way which clears up the problem. As trees have more leaves than
internal nodes the total number of nodes of $T$ is less than
$2q$. This proves that the total time to compute $\decom G$ is bounded
by a polynomial in $w(n+e)$ where $w$ is the number of decompositions
and $\Theta(n+e)$ is the size of the input.
\end{proof}

We can extract some bounds on the number of node decompositions from
the arguments just given.

\begin{pro}
\phantomsection\label{pro:decomBounds}
Let $v$ be a node of a standard graph $G$. Let $l\defeq\lab G v$. If
$G$ has $k$ standard components that give $v$ label 1, then there
are between $\frac{k}{l}$ and $\binom {k+l-1} l$ standard $v$-decompositions
of $G$. If $v$ is a node of minimal positive label in $G$, then there
are at least $k$ standard $v$-decompositions of $G$.
\end{pro}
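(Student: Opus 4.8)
The plan is to establish the three bounds separately, drawing on the combinatorial structure already set up in the excerpt. Throughout, let $v$ be the fixed node, $l\defeq\lab G v$, and let $H_1,\ldots,H_k$ denote the standard components of $G$ that give $v$ the label $1$. Recall from Definition \ref{dfn:stdNodeDecomposition} that a standard $v$-decomposition is a multiset $\hsym$ drawn from these components with $\card\hsym=l$, subject to the standardness constraint that $\gsub G {\sum\hsym}$ is standard; this is the object I must count.

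For the upper bound $\binom{k+l-1}{l}$, I would simply observe that \emph{every} standard $v$-decomposition is a size-$l$ multiset chosen from the $k$ components $H_1,\ldots,H_k$, so the set of $v$-decompositions injects into the set of all size-$l$ multisets on a $k$-element set. That latter count is exactly $\binom{k+l-1}{l}$ by the stars-and-bars formula, which gives the bound immediately; dropping the standardness constraint only enlarges the count. For the lower bound $\frac{k}{l}$ in the general case, I would use Corollary \ref{cor:compExtend}: each $H_i$ lies in at least one standard decomposition of $G$, and hence (taking the submultiset giving $v$ the label $1$) in at least one standard $v$-decomposition. Thus the $d$ standard $v$-decompositions of $G$, each of cardinality $l$, collectively cover all $k$ components, giving $ld\geq k$, i.e.\ $d\geq\frac{k}{l}$. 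This is precisely the counting argument used in the proof of Theorem \ref{thm:algorithm} to bound the number of components by the output size.

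For the sharper claim that $d\geq k$ when $v$ has minimal positive label, I would invoke Proposition \ref{pro:relevantCriterion} directly. For each $i=1,\ldots,k$, take $\hsym\defeq\set{H_i}$, a single standard component giving $v$ the label $1$; since $H_i$ is a standard component, $\gsub G {H_i}$ is standard, so the hypothesis of Proposition \ref{pro:relevantCriterion} is met. That proposition then produces a standard $v$-decomposition $\hsym\p_i$ whose submultiset giving $v$ the label $1$ and which contains $H_i$ (with the remaining $l-1$ copies being the maximal standard component $H$). Distinct choices of $i$ yield distinct $v$-decompositions, because $\hsym\p_i$ contains $H_i$ whereas $\hsym\p_j$ for $j\neq i$ need not, and more carefully because the components $H_i$ are pairwise distinct as members of the indexing; hence there are at least $k$ such decompositions.

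The main obstacle I anticipate is the injectivity in the last step: the decompositions $\hsym\p_i$ manufactured by Proposition \ref{pro:relevantCriterion} all consist of $H_i$ together with copies of the \emph{same} maximal component $H$, so if $H_i=H$ for the index with $H_1=H$, two different seeds might produce the same multiset. I would resolve this by noting that $\hsym\p_i$ is characterized as the set of components in a full standard decomposition that give $v$ the label $1$, and that the construction guarantees $H_i\in\hsym\p_i$; since the $H_i$ are distinct components, the multisets $\hsym\p_i$ are forced to differ for indices $i$ whose component is not $H$, and the single index with $H_i=H$ contributes the all-$H$ decomposition, which is distinct from all the others. Carefully bookkeeping this distinctness is the only place where the argument requires attention rather than routine application of the cited results.
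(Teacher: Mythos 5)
Your proposal is correct and follows essentially the same route as the paper: stars-and-bars for the upper bound, Corollary \ref{cor:compExtend} plus the cardinality-$l$ observation for the $\frac{k}{l}$ bound, and Proposition \ref{pro:relevantCriterion} applied to the singletons $\set{H_i}$ for the bound of $k$ at a node of minimal positive label. Your extra bookkeeping on the injectivity of $i\mapsto\hsym\p_i$ (the non-maximal member of each multiset identifies the seed, and the all-$H$ multiset handles the remaining index) is a detail the paper's proof leaves implicit, and it resolves correctly exactly as you describe.
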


\begin{proof}
Every $v$-decomposition of $G$ has exactly $l$ elements, and the
elements of each such multiset are chosen among the $k$ standard components that give $v$ the 
label 1, so there cannot be more than $\binom {k+l-1} l$ standard
$v$-decompositions.

Every one of the $k$ standard components giving $v$ label 1 can be extended to a standard
decomposition of $G$ by Corollary \ref{cor:compExtend} and therefore
also to a standard $v$-decomposition. We get the minimal number of
standard $v$-decompositions when each of these extensions are
unique. As each standard $v$-decomposition has $l$ elements, that
implies the existence of at least $\frac{k}{l}$ standard $v$-decompositions.

If $v$ is a label of minimal positive label, then each standard
component $H$ that gives $v$ the label 1 can be extended to a
$v$-decomposition using only the maximal standard component by Proposition
\ref{pro:relevantCriterion}. So there are at least $k$ standard
$v$-decompositions in this case.
\end{proof}

Here are examples in which the bounds from the proposition are sharp. 

\begin{center}
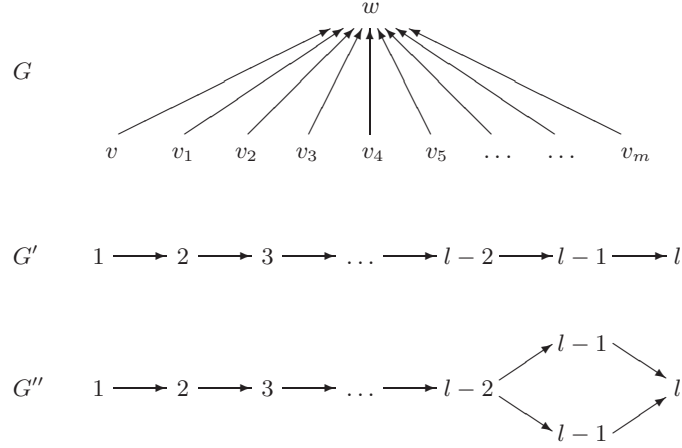
\begin{figure}[ht]
  \begin{picture}(255,170)
    % first graph
    \put(0,140){\small $G$}
    \put(40,119){\vector(2,1){80}}
    \put(65,119){\vector(3,2){60}}
    \put(89,119){\vector(1,1){40}}
    \put(112,119){\vector(1,2){20}}
    \put(135,119){\vector(0,1){40}}
    \put(158,119){\vector(-1,2){20}}
    \put(181,119){\vector(-1,1){40}}
    \put(205,119){\vector(-3,2){60}}
    \put(230,119){\vector(-2,1){80}}
    \put(132,165){\small $w$}
    \put(35,110){\small $v$}
    \put(60,110){\small $v_1$}
    \put(84,110){\small $v_2$}
    \put(107,110){\small $v_3$}
    \put(132,110){\small $v_4$}
    \put(156,110){\small $v_5$}
    \put(178,110){\small $\ldots$}
    \put(202,110){\small $\ldots$}
    \put(230,110){\small $v_m$}
    % second graph
    \put(0,70){\small $G\p$}
    \put(30,70){\small $1$}
    \put(38,73){\vector(1,0){20}}
    \put(62,70){\small $2$}
    \put(70,73){\vector(1,0){20}}
    \put(94,70){\small $3$}
    \put(102,73){\vector(1,0){20}}
    \put(126,70){\small $\ldots$}
    \put(140,73){\vector(1,0){20}}
    \put(163,70){\small $l-2$}
    \put(184,73){\vector(1,0){20}}
    \put(206,70){\small $l-1$}
    \put(227,73){\vector(1,0){20}}
    \put(250,70){\small $l$}
    % third graph
    \put(0,20){\small $G\pp$}
    \put(30,20){\small $1$}
    \put(38,23){\vector(1,0){20}}
    \put(62,20){\small $2$}
    \put(70,23){\vector(1,0){20}}
    \put(94,20){\small $3$}
    \put(102,23){\vector(1,0){20}}
    \put(126,20){\small $\ldots$}
    \put(140,23){\vector(1,0){20}}
    \put(163,20){\small $l-2$}
    \put(184,26){\vector(3,2){20}}
    \put(184,20){\vector(3,-2){20}}
    \put(206,37){\small $l-1$}
    \put(206,3){\small $l-1$}
    \put(227,40){\vector(3,-2){20}}
    \put(227,6){\vector(3,2){20}}
    \put(250,20){\small $l$}
  \end{picture}
\caption{Three graphs leading to sharp bounds in Proposition \ref{pro:decomBounds}}
\phantomsection\label{figure:sharpBounds}
\end{figure}
\end{center}

\begin{ex}
  Consider the graph $G$ from Figure \ref{figure:sharpBounds}, 
  whose labels we will presently specify, and the graphs $G\p$ and $G\pp$ from the same figure, 
  whose labels are specified in the picture. 
  \begin{itemize}
    \item Choose the labels such that $\lab G {v_i} \geq \lab G v$ for all $i$ 
    and $\lab G w \geq \lab G v + \sum_i \lab G {v_i}$. 
    Then for each multiset of standard graphs $\hsym$ satisfying conditions (1), (2) and (4) 
    from Definition \ref{dfn:stdNodeDecomposition}, condition (3) is automatically satisfied. 
    The number $k$ from the proposition depends on the choice of the labels, 
    but in any case, $G$ has $\binom {k+l-1} l$ standard $v$-decompositions. 
    The upper bound for the number of standard $v$-decompositions is therefore sharp. 
% Comment: Let's include this example, it's instructive. It's not always the case that the bound is sharp if $l=1$.
% In addition to that's, it's nice to use two different labelings on the same graph for showing sharpness of both the upper and the lower bound. 
 \item Define $\lab G v \defeq 1$, $\lab G {v_i} \defeq 1$ for all $i$ and $\lab G w \defeq 2$. 
     Then the standard components that give $v$ the label 1 correspond to 
     the power set of $\set{v_1, \ldots, v_m}$, whose cardinality is $2^m$. 
     All standard components that give $v$ the label 1 lead to standard $v$-decompositions of $G$. 
     The lower bound for the number of standard $v$-decompositions is therefore sharp. 
    \item The graph $G\p$ provides another example of sharpness of the lower bound, this time with $l>1$.
    We define $v$ as the node of label $l$. 
    As in the proposition, we denote by $k$ the number of standard components of $G\p$ that give $v$ the label 1. 
    Since $v$ is labeled 1 in every standard component, $k$ is just the number of components of $G\p$. 
    Likewise, a standard $v$-decomposition of $G\p$ is just a standard decomposition of $G\p$. 
    Obviously $k = l$, and there exists precisely one standard $v$-decomposition. 
    \item Also in the graph $G\pp$, we define $v$ as the node of label $l$. 
    This graph has the property that the lower bound is sharp
    while, unlike in the previous example, there exists more than one standard $v$-decomposition. 
    Note that the fraction $\frac{k}{l} = \frac{2l -1}{l}$ is not an integer, but $\lceil \frac{k}{l} \rceil = 2$. 
  \end{itemize}
\end{ex}

We leave the question open whether there exist $k$ and $l$ as in the proposition such that $\frac{k}{l} > 2$ 
and there exists a graph $G$ such that the lower bound from the proposition is sharp. 

%%%%%%%%%%%%

\section{From standard sets to standard graphs}

In the remaining three sections, we investigate the relation
between standard decomposition of labeled graphs and another
combinatorial problem called \emph{Connect Four
  decomposition}. 
% Connect Four decomposition will probably at first
% appear to have no obvious connection to standard decomposition and
% once the connection is clear it may appear to be only a specific case
% of standard decomposition.
In the end we show that the two problems
are equivalent.

% We initially considered Connect Four decompositions as part of a
% research project on the Hilbert scheme of points \cite{components} and the theory
% of standard decomposition grew out of that work. Even so, we have
% chosen to present Connect Four decomposition after standard
% decomposition in this paper because standard decomposition is the
% easier concept to define and understand.

A \emph{standard set}, or \emph{staircase}, is a subset $\Delta \subseteq \N^d$ whose
complement $C \defeq \N^d\setminus \Delta$ satisfies $C+\N^d = C$. 
We are only going to consider standard sets of finite cardinalities. 
Standard sets in $\N$ are just intervals starting at $0$; 
in $\N^2$, they can be identified with partitions, or with Young diagrams\footnote{in the French notation}; 
in $\N^3$, they are also known as \emph{plane partitions}; 
in $\N^d$ for $d>3$, they are also known as \emph{solid partitions}. 
Standard sets in $\N^d$ canonically correspond to monomial ideals in the polynomial ring $k[x_1, \ldots, x_d]$. 
See Figure \ref{figure:standardSets} for examples in dimensions 1, 2, and 3. 

\begin{center}
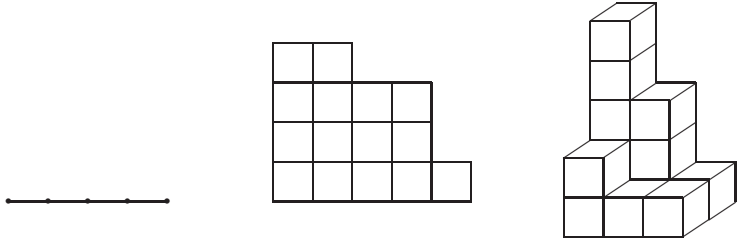
\begin{figure}[ht]
  \begin{picture}(275,130)
    % one-dimensional
    \put(0,33.6){\line(1,0){60}}
    \multiput(0,33.6)(15,0){5}{\circle*{2}}
    % two-dimensional
    \multiput(100,33.6)(15,0){3}{\line(0,1){60}}
    \multiput(145,33.6)(15,0){2}{\line(0,1){45}}
    \put(175,33.6){\line(0,1){15}}
    \multiput(100,33.6)(0,15){2}{\line(1,0){75}}
    \multiput(100,63.6)(0,15){2}{\line(1,0){60}}
    \put(100,93.6){\line(1,0){30}}
    % three-dimensional
    \put(210,20){\line(1,0){45}}
    \put(210,20){\line(0,1){30}}
    \put(210,35){\line(1,0){45}}
    \put(210,50){\line(1,0){15}}
    \put(225,20){\line(0,1){30}}
    \put(240,20){\line(0,1){15}}
    \put(255,20){\line(0,1){15}}
    \put(210,50){\line(3,2){10}}
    \put(225,50){\line(3,2){10}}
    \put(225,35){\line(3,2){10}}
    \put(240,35){\line(3,2){20}}
    \put(255,35){\line(3,2){20}}
    \put(255,20){\line(3,2){20}}
    \put(220,56.8){\line(0,1){45}}
    \put(235,41.8){\line(0,1){60}}
    \put(250,41.8){\line(0,1){30}}
    \put(250,41.8){\line(1,0){15}}
    \put(265,26.8){\line(0,1){15}}
    \put(220,71.8){\line(1,0){30}}
    \put(220,86.8){\line(1,0){15}}
    \put(220,101.8){\line(1,0){15}}
    \put(220,56.8){\line(1,0){30}}
    \put(235,41.8){\line(1,0){15}}
    \put(220,101.8){\line(3,2){10}}
    \put(235,71.8){\line(3,2){10}}
    \put(235,86.8){\line(3,2){10}}
    \put(235,101.8){\line(3,2){10}}
    \put(250,71.8){\line(3,2){10}}
    \put(250,56.8){\line(3,2){10}}
    \put(260,78.6){\line(-1,0){15}}
    \put(245,78.6){\line(0,1){30}}
    \put(230,108.6){\line(1,0){15}}
    \put(260,78.6){\line(0,-1){30}}
    \put(260,48.6){\line(1,0){15}}
    \put(275,33.6){\line(0,1){15}}
  \end{picture}
\caption{Standard sets in dimensions 1, 2 and 3}
\phantomsection\label{figure:standardSets}
\end{figure}
\end{center}

Consider the projection to the first $d-1$ components, 
$q^d : \N^d \to\N^{d-1} : \beta \mapsto (\beta_1,\ldots,\beta_{d-1})$ 
and its complementary projection, 
$q_d : \N^d \to\N : (\beta_1,\ldots,\beta_d) \mapsto \beta_d$
For each standard set $\Delta$, we have the equality 
\[
  \Delta = \setBuilderBarLeft{\beta \in \N^d}{q_d(\beta) < \card{(q^d)^{-1}(q^d(\beta)) \cap \Delta}}. 
\]
The integer $\card{(q^d)^{-1}(q^d(\beta)) \cap \Delta}$ appearing on the right-hand side is the 
cardinality of the fiber of the projection $q^d : \Delta \to \N^{d-1}$ over the point $\gamma \defeq q^d(\beta)$. 
We call that quantity the \emph{height} of $\Delta$ over $\gamma$. 
The equation displayed above implies that the datum of standard set $\Delta$ is equivalent to 
the datum of the projection $\Delta\p \defeq q^d(\Delta)$, which is a standard set in $\N^{d-1}$, 
and the datum of the heights over all $\gamma \in \Delta\p$. 
The heights satisfy a compatibility condition: 
Upon denoting by $h_\gamma$ the height over $\gamma \in \Delta\p$, 
we see that $h_{\gamma+e_i} \leq h_\gamma$ for all standard basis elements $e_i \in \N^{d-1}$ 
and all $\gamma \in \Delta\p$ such that also $\gamma+e_i \in \Delta\p$. 
These observations motivate the following definition: 

\begin{dfn}[Standard graph of a standard set]
\phantomsection\label{dfn:StdSetToPartition}
  Let $\Delta \subseteq \N^d$ be a finite standard set. 
  We define the \emph{standard graph of $\Delta$}, denoted by $G(\Delta)$, by setting 
  \[
    \begin{split}
      \nodes {G(\Delta)} & \defeq q^d(\Delta) , \\
      \edges {G(\Delta)} & \defeq \setBuilderBarLeft {(\gamma\p,\gamma)}{\gamma\p = \gamma + e_i \text{ for some }i} \\
      \lab {G(\Delta)} \gamma & \defeq \card{(q^d)^{-1}(\gamma) \cap \Delta} . 
    \end{split}
  \]
\end{dfn}

The discussion leading to the definition proves that $G(\Delta)$ is indeed a standard graph. 
The transition from a standard set to its standard graph is illustrated in the first two 
pictures in Figure \ref{figure:standardSetsAndGraphs}. 

Addition of standard graphs has a counterpart on standard sets, called \emph{C4 addition}. 

\begin{dfn}[C4 sum]
\phantomsection\label{dfn:C4Sum}
Let $\Delta_1$ and $\Delta_2$ be two finite standard sets in $\N^{d}$. 
We define the \emph{Connect Four sum}, or \emph{C4 sum} of $\Delta_1$ and $\Delta_2$ by
\[
  \Delta_1 + \Delta_2 \defeq \left\lbrace
  \begin{array}{c|c}
    \beta\in\N^d & 
    q_d(\beta) < \card{ (q^d)^{-1}\bigl(q^d(\beta)\bigr)\cap\Delta_1} \\
    & + \card{ (q^d)^{-1}\bigl(q^d(\beta)\bigr)\cap\Delta_2}
  \end{array}
  \right\rbrace .
\]
\end{dfn}

So for determining the C4 sum of $\Delta_1$ and $\Delta_2$, 
we define $\Delta\p$ to be the union of $q^d(\Delta_1)$ and $q^d(\Delta_2)$ and, 
for all $\gamma \in \Delta\p$, $h_\gamma$ to be 
the sum of the heights over $\gamma$ of $\Delta_1$ and $\Delta_2$.\footnote{We 
say that the height of $\Delta_i$ over $\gamma$ is zero if $\gamma \notin q^d(\Delta_i)$.}
Then $\Delta$ is characterized by its projection $\Delta\p$ and the heights $h_\gamma$. 

Here is a more graphic way of thinking about the C4 sum: 
Place $\Delta_1$ and $\Delta_2$ somewhere on the $d$-axis in $\N^d$ such that they do not intersect, 
subsequently drop the cubes along the $d$-axis, 
until they get stacked above each other on the $1,2,\ldots,(d-1)$-hyperplane. 
The result is the standard set $\Delta_1 + \Delta_2$. 
Figure \ref{figure:C4sums} illustrates that process in two examples. 
The figure also explains the analogy to the eponymous game \emph{Connect Four}. 

\begin{center}
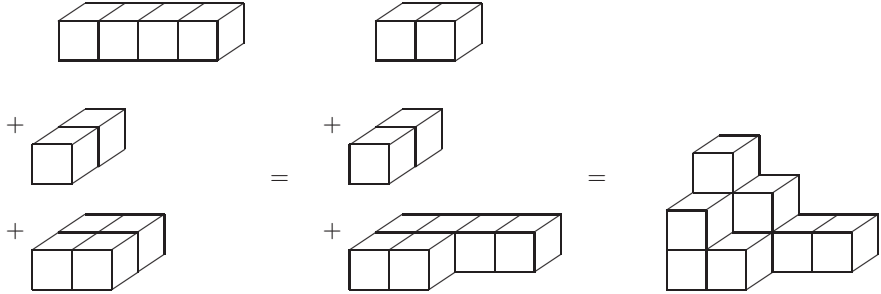
\begin{figure}[ht]
  \begin{picture}(330,140)
    % first sum
    % lowest block
    \put(40,20){\line(3,2){20}}
    \put(40,35){\line(3,2){20}}
    \put(25,35){\line(3,2){20}}
    \put(10,35){\line(3,2){20}}
    \put(40,20){\line(0,1){15}}
    \put(25,20){\line(0,1){15}}
    \put(10,20){\line(0,1){15}}
    \put(10,20){\line(1,0){30}}
    \put(10,35){\line(1,0){30}}
    \put(50,26.7){\line(0,1){15}}
    \put(60,33.4){\line(0,1){15}}
    \put(20,41.7){\line(1,0){30}}
    \put(30,48.4){\line(1,0){30}}
    \put(0,40){\small $+$}
    % middle block
    \put(10,60){\line(1,0){15}}
    \put(10,75){\line(1,0){15}}
    \put(30,88.4){\line(1,0){15}}
    \put(10,60){\line(0,1){15}}
    \put(25,60){\line(0,1){15}}
    \put(25,60){\line(3,2){20}}
    \put(25,75){\line(3,2){20}}
    \put(10,75){\line(3,2){20}}
    \put(45,73.4){\line(0,1){15}}
    \put(35,66.7){\line(0,1){15}}
    \put(20,81.7){\line(1,0){15}}
    \put(0,80){\small $+$}
    % highest block
    \put(30,128.4){\line(1,0){60}}
    \put(20,121.7){\line(1,0){60}}
    \put(20,106.7){\line(1,0){60}}
    \multiput(20,106.7)(15,0){5}{\line(0,1){15}}
    \multiput(20,121.7)(15,0){5}{\line(3,2){10}}
    \put(80,106.7){\line(3,2){10}}
    \put(90,113.4){\line(0,1){15}}
    \put(100,60){\small $=$}
    % second sum
    % lowest block
    \put(160,20){\line(3,2){10}}
    \put(160,35){\line(3,2){20}}
    \put(145,35){\line(3,2){20}}
    \put(130,35){\line(3,2){20}}
    \put(160,20){\line(0,1){15}}
    \put(145,20){\line(0,1){15}}
    \put(130,20){\line(0,1){15}}
    \put(130,20){\line(1,0){30}}
    \put(130,35){\line(1,0){30}}
    \put(170,26.7){\line(0,1){15}}
    \put(170,26.7){\line(1,0){30}}
    \put(185,26.7){\line(0,1){15}}
    \put(185,41.7){\line(3,2){10}}
    \put(200,41.7){\line(3,2){10}}
    \put(200,26.7){\line(3,2){10}}
    \put(200,26.7){\line(0,1){15}}
    \put(210,33.4){\line(0,1){15}}
    \put(140,41.7){\line(1,0){60}}
    \put(150,48.4){\line(1,0){60}}
    \put(120,40){\small $+$}
    % middle block
    \put(130,60){\line(1,0){15}}
    \put(130,75){\line(1,0){15}}
    \put(150,88.4){\line(1,0){15}}
    \put(130,60){\line(0,1){15}}
    \put(145,60){\line(0,1){15}}
    \put(145,60){\line(3,2){20}}
    \put(145,75){\line(3,2){20}}
    \put(130,75){\line(3,2){20}}
    \put(165,73.4){\line(0,1){15}}
    \put(155,66.7){\line(0,1){15}}
    \put(140,81.7){\line(1,0){15}}
    \put(120,80){\small $+$}
    % highest block
    \put(150,128.4){\line(1,0){30}}
    \put(140,121.7){\line(1,0){30}}
    \put(140,106.7){\line(1,0){30}}
    \multiput(140,106.7)(15,0){3}{\line(0,1){15}}
    \multiput(140,121.7)(15,0){3}{\line(3,2){10}}
    \put(170,106.7){\line(3,2){10}}
    \put(180,113.4){\line(0,1){15}}
    \put(220,60){\small $=$}
    % result
    \put(280,20){\line(3,2){10}}
    \put(280,35){\line(3,2){20}}
    \put(265,35){\line(3,2){10}}
    \put(280,20){\line(0,1){15}}
    \put(265,20){\line(0,1){30}}
    \put(250,20){\line(0,1){30}}
    \put(250,20){\line(1,0){30}}
    \put(250,35){\line(1,0){30}}
    \put(290,26.7){\line(0,1){30}}
    \put(290,26.7){\line(1,0){30}}
    \put(305,26.7){\line(0,1){15}}
    \put(305,41.7){\line(3,2){10}}
    \put(320,41.7){\line(3,2){10}}
    \put(320,26.7){\line(3,2){10}}
    \put(320,26.7){\line(0,1){15}}
    \put(330,33.4){\line(0,1){15}}
    \put(275,41.7){\line(1,0){45}}
    \put(300,48.4){\line(1,0){30}}
    \put(275,41.7){\line(0,1){30}}
    \put(300,48.4){\line(0,1){15}}
    \put(260,56.7){\line(1,0){30}}
    \put(290,56.7){\line(3,2){10}}
    \put(285,63.4){\line(1,0){15}}
    \put(285,63.4){\line(0,1){15}}
    \put(270,78.4){\line(1,0){15}}
    \put(250,50){\line(3,2){10}}
    \put(250,50){\line(1,0){15}}
    \put(265,50){\line(3,2){20}}
    \put(260,56.7){\line(0,1){15}}
    \put(260,71.7){\line(1,0){15}}
    \put(260,71.7){\line(3,2){10}}
    \put(275,71.7){\line(3,2){10}}
  \end{picture}
\caption{C4 sums of 2-dimensional standard sets yielding a 3-dimensional standard set}
\phantomsection\label{figure:C4sums}
\end{figure}
\end{center}

It is easy to see that
\begin{itemize}
  \item $\Delta_1 + \Delta_2$ is a standard set;
  \item its cardinality is the sum of the cardinalities of $\Delta_1$ and $\Delta_2$; 
  \item C4 addition is associative and commutative, and $\emptyset$ is its neutral element; 
  \item $G(\Delta_1 + \Delta_2) = \gadd{G(\Delta_1)}{G(\Delta_2)}$. 
\end{itemize}
The last item confirms that C4 addition of standard set is indeed the counterpart of addition of standard graphs. 
Here is the counterpart of standard decomposition of standard graphs. 

\begin{dfn}[C4 decomposition]
\phantomsection\label{dfn:C4Decomposition}
Let $\Delta \subseteq \N^d$ be a finite standard set. 
A \emph{C4 decomposition of $\Delta$} is a multiset $\set{\Delta_1, \ldots, \Delta_h}$ 
of standard sets in $\N^{d - 1}$ whose C4 sum equals $\Delta$. 
Here we understand each $\Delta_i$ to be a standard set in $\N^d$ via the embedding 
$\N^{d-1} \hookrightarrow \N^d : \gamma \mapsto (\gamma,0)$. 
\end{dfn}

Figure \ref{figure:C4sums} shows C4 decompositions of the standard set in $\N^3$ on the right hand side
into two (multi)sets of standard set in $\N^2$. 
Note, however, that the three-dimensional standard set of that example 
has more C4 decompositions than the two shown in the figure. 

The following proposition is the first step of four in proving that
C4 decomposition and standard decomposition of labeled graphs are
equivalent.

\begin{pro}
\phantomsection\label{pro:C4vsGraph}
  Let $\Delta \subseteq \N^d$ be a finite standard set. 
  Then the C4 decompositions of $\Delta$ and the standard decompositions of $G(\Delta)$ 
  are in canonical bijection. 
\end{pro}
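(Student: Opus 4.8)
The plan is to exhibit explicit mutually inverse maps $\Phi$ and $\Psi$ between the C4 decompositions of $\Delta$ and the standard decompositions of $G(\Delta)$, built on two facts already in hand: the additivity $G(\Delta_1 + \Delta_2) = \gadd{G(\Delta_1)}{G(\Delta_2)}$ recorded just before the proposition, and the fact that a finite standard set is recovered from its heights through the characterization $\Delta = \setBuilderBarLeft{\beta \in \N^d}{q_d(\beta) < \lab{G(\Delta)}{q^d(\beta)}}$, so that $G$ is injective on finite standard sets. Write $\Delta\p \defeq q^d(\Delta) = \nodes{G(\Delta)}$. A preliminary observation used throughout is that $\Delta\p$ is itself a standard set in $\N^{d-1}$, that is, it is closed under subtracting standard basis vectors; this is immediate from $\Delta$ being downward closed. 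I will freely regard any standard set $\Delta_0 \subseteq \Delta\p$ as a $0$-$1$ graph on the common node set $\Delta\p$ by taking its graph $G(\Delta_0)$ and extending it by the label $0$ on $\Delta\p \setminus \Delta_0$; this is harmless because $\oplus$ treats a node lying outside one summand's node set as contributing the label $0$ there.

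First I would define the forward map. Given a C4 decomposition $\set{\Delta_1, \ldots, \Delta_h}$, each piece satisfies $\Delta_i \subseteq \Delta\p$ (the projections $q^d(\Delta_i) = \Delta_i$ cover $\Delta\p$), and I set $\Phi(\set{\Delta_1, \ldots, \Delta_h}) \defeq \set{G(\Delta_1), \ldots, G(\Delta_h)}$. Iterating the additivity identity gives $\sum_i G(\Delta_i) = G(\Delta)$. Each $G(\Delta_i)$ is a standard $0$-$1$ graph: its labels lie in $\set{0,1}$, and along any edge $(\gamma + e_j, \gamma)$ the downward closure of $\Delta_i$ forces $\lab{G(\Delta_i)}{\gamma + e_j} \leq \lab{G(\Delta_i)}{\gamma}$. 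Moreover $\gsub{G(\Delta)}{G(\Delta_i)} = G\bigl(\sum_{j \neq i}\Delta_j\bigr)$ is the graph of a standard set, hence standard, so $G(\Delta_i)$ is a standard component of $G(\Delta)$ (it has a nonzero label provided each $\Delta_i$ is nonempty, which is the natural convention matching the requirement that a standard component be nonzero). Thus $\Phi$ produces an element of $\decom{G(\Delta)}$.

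Next I would define the backward map $\Psi$, sending a standard decomposition $\set{H_1, \ldots, H_h}$ to the multiset of supports $\set{S_1, \ldots, S_h}$, where $S_i \defeq \setBuilderBarLeft{\gamma \in \Delta\p}{\lab{H_i}{\gamma} = 1}$. The heart of the argument is that each $S_i$ is a standard set in $\N^{d-1}$. To see this, take $\delta \in S_i$ and any $j$ with $\delta - e_j \in \N^{d-1}$; since $\delta \in \Delta\p$ and $\Delta\p$ is downward closed, also $\delta - e_j \in \Delta\p$, so $(\delta, \delta - e_j) \in \edges{G(\Delta)}$. As $H_i$ is standard, $\lab{H_i}{\delta} \leq \lab{H_i}{\delta - e_j}$, whence $\delta - e_j \in S_i$; iterating shows $S_i$ is closed under subtracting basis vectors, i.e.\ a standard set. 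By construction $H_i = G(S_i)$, and $S_i \neq \emptyset$ because a standard component has a nonzero label. Finally, $\sum_i G(S_i) = \sum_i H_i = G(\Delta)$; since the left side is $G$ of the C4 sum of the $S_i$ by additivity, injectivity of $G$ yields that this C4 sum is $\Delta$. Hence $\Psi$ lands in the set of C4 decompositions of $\Delta$.

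It then remains to observe that $\Phi$ and $\Psi$ are mutually inverse, which is immediate: the support of $G(\Delta_i)$ is $\Delta_i$, so $\Psi\circ\Phi$ is the identity, and $H_i = G(S_i)$ gives $\Phi\circ\Psi$ the identity. The main obstacle is the claim in the backward direction that the support of an \emph{arbitrary} standard $0$-$1$ component of $G(\Delta)$ is a standard set. The subtlety there is guaranteeing that the edges needed to propagate the standardness inequality downward are genuinely present in $G(\Delta)$, which is precisely what the downward closure of $\Delta\p$ provides; once that is secured, the compatibility condition defining a standard graph does the rest. Everything else reduces to the additivity and injectivity of $G$ together with routine bookkeeping about node sets.
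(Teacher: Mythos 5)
Your proof is correct and follows essentially the same route as the paper's: the forward map labels the common node set $\Delta\p$ by the indicator of each piece $\Delta_i$, the backward map takes supports of components, and the key step in both is that standardness of a component plus downward closure of $\Delta\p$ forces each support to be a standard set. Your added details (verifying $\gsub{G(\Delta)}{G(\Delta_i)}$ is standard, and invoking injectivity of $G$ rather than comparing heights directly) only make explicit what the paper leaves as routine.
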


\begin{proof}
  Let $\set{\Delta_1, \ldots, \Delta_h}$ be a C4 decomposition of $\Delta$. 
  Consider, for $j = 1, \ldots, h$, the graph $H_j$ whose nodes and edges are identical 
  to the nodes and edges of $G(\Delta)$ and whose labeling is given by 
  \[
    \lab{H_j} \gamma = 
    \begin{cases}
      1 \text{ if } \gamma \in H_j \\
      0 \text{ else}.
    \end{cases}
  \]
  In other words, we think of $\Delta_j$, which is a priori a standard
  set in $\N^{d-1}$, as being a standard set in $\N^d$, as we do in
  Definition \ref{dfn:C4Decomposition}, and define $H_j \defeq
  G(\Delta_j)$.  Then $H_j$ is obviously a standard 0-1 graph.  The
  fact that $\set{\Delta_1, \ldots, \Delta_h}$ is a C4 decomposition
  of $\Delta$ implies that $\hsym \defeq \set{H_1, \ldots, H_h}$ is a
  standard decomposition of $G(\Delta)$.
  
  Conversely, let $\hsym$ be a standard decomposition of $G(\Delta)$. 
  Recall that the node set of $G(\Delta)$ is $\Delta\p \defeq q^d(\Delta)$, which is a standard set in $\N^{d-1}$. 
  For every $H \in \hsym$, we define $\Delta(H)$ to be the set of all $\gamma \in \Delta\p$
  with $\lab{H} \gamma = 1$. 
  The definition of $\edges {G(\Delta)}$, together with the fact that $H$ is a standard graph, 
  shows that $\Delta(H) \subseteq \N^{d - 1}$ is a standard set contained in $\Delta\p$. 
  The fact that $\hsym$ is a standard decomposition of $G(\Delta)$ 
  means that for each $\gamma \in \Delta\p$, the labels of all nodes $\gamma$, which are 0 or 1, 
  sum up to the height $h_\gamma$. 
  This means that C4 sum of the corresponding multiset 
  $\setBuilder{\Delta(H)}{H \in \hsym}$ equals $\Delta$, so that multiset is a C4 decomposition of $\Delta$. 
  
  The two constructions are readily seen to be mutual inverses. 
\end{proof}

%%%%%%%%%%%%

\section{Canonicalization for graphs of standard sets}
\phantomsection\label{sec:canonicalization}

The graph of a given standard set will in general contain many nodes of identical label
which are connected by an edge. 
From the discussion at the end of Section \ref{sec:standardGraphsAndComponents}, 
we know that edges between nodes of the same label are irrelevant for computing the standard decomposition of that graph. 
We also know that we can get rid of those redundancies, 
without spoiling standard decompositions, by passing from a graph to its canonicalization. 
Given a standard set $\Delta$, we should therefore not work with its standard graph, 
but rather the canonicalization of its standard graph. 

\begin{dfn}[Canonicalization of the standard graph of $\Delta$]
\phantomsection\label{dfn:canonicalization}\ \\
  \begin{itemize}
    \item
    We say that a subset $B$ of $\N^{d-1}$ is \emph{connected} if for all $\gamma,\gamma\p \in B$, 
    there exists a sequence $(\gamma_j)$ in $B$ starting at $\gamma_0 = \gamma$ and
    ending at $\gamma_n = \gamma\p$ such that for all $j$, 
    we either have $\gamma_{j+1} = \gamma_j + e_i$ or $\gamma_j = \gamma_{j+1} + e_i$ 
    for some $i \in \set{1,\ldots,d-1}$. 
    \item
    A {\it connected component} of $A \subseteq \N^{d-1}$ is a connected $B \subseteq A$, 
    maximal with respect to inclusion. 
    \item
    Let $\Delta \subseteq \N^d$ be a standard set, $h \defeq \max(q_d(\Delta))$ its height, 
    and $\Delta\p \defeq q^d(\Delta)$ its projection. 
    For $a=1, \ldots, h$, we define the \emph{$a$-th isohypse} as 
    \[
      \Delta^a \defeq \setBuilder{\gamma \in \Delta\p}{\card{(q^d)^{-1}(\gamma) \cap \Delta} = a}, 
    \]
    the set of all points in the projection of height $a$. 
    \item
    We define the graph $G\p(\Delta)$ by 
    \[
      \begin{split}
        \nodes{G\p(\Delta)} & \defeq \setBuilder{ \text{connected components of } \Delta^a}
        { a = 1, \ldots, h }; \\
        \edges{G\p(\Delta)} & \defeq \setBuilder{ (\Delta^a_b, \Delta^c_d)}
        {\exists \gamma\p \in \Delta^a_b, \gamma \in \Delta^c_d : \gamma\p = \gamma + e_i \text{ for some } i } \\
        \lab{G\p(\Delta)} {\Delta^a_b} & \defeq a, 
      \end{split}
    \]
    where we denote by $\Delta^a_b$ the connected components of isohypse $\Delta^a$. 
  \end{itemize}
\end{dfn}

The transition from $\Delta$ to $G(\Delta)$ and to $G\p(\Delta)$ is illustrated in Figure \ref{figure:standardSetsAndGraphs}. 

\begin{center}
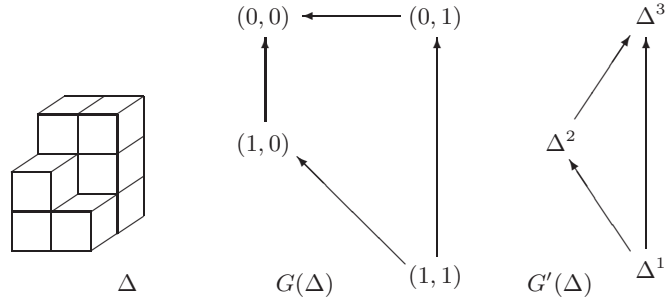
\begin{figure}[ht]
  \begin{picture}(270,130)
    \put(10,20){\line(1,0){30}}
    \put(10,20){\line(0,1){30}}
    \put(10,35){\line(1,0){30}}
    \put(10,50){\line(1,0){15}}
    \put(25,20){\line(0,1){30}}
    \put(40,20){\line(0,1){15}}
    \put(10,50){\line(3,2){10}}
    \put(25,50){\line(3,2){10}}
    \put(25,35){\line(3,2){10}}
    \put(40,35){\line(3,2){20}}
    \put(40,20){\line(3,2){20}}
    \put(20,56.8){\line(0,1){15}}
    \put(35,41.8){\line(0,1){30}}
    \put(50,26.8){\line(0,1){45}}
    \put(20,71.8){\line(1,0){30}}
    \put(20,56.8){\line(1,0){30}}
    \put(35,41.8){\line(1,0){15}}
    \put(20,71.8){\line(3,2){10}}
    \put(35,71.8){\line(3,2){10}}
    \put(50,71.8){\line(3,2){10}}
    \put(50,56.8){\line(3,2){10}}
    \put(60,78.6){\line(-1,0){30}}
    \put(60,78.6){\line(0,-1){45}}
    \put(50,5){\small $\Delta$}
    % graph
    \put(95,106){\small $(0,0)$}
    \put(160,106){\small $(0,1)$}
    \put(95,58){\small $(1,0)$}
    \put(160,8){\small $(1,1)$}
%    \put(120,111){\vector(1,0){37}}
    \put(157,109){\vector(-1,0){37}}
    \put(106,69){\vector(0,1){32}}
    \put(171,18){\vector(0,1){83}}
    \put(159,15){\vector(-1,1){41}}
    \put(110,5){\small $G(\Delta)$}
    % canonicalized graph
    \put(246,106){\small $\Delta^3$}
    \put(212,58){\small $\Delta^2$}
    \put(246,10){\small $\Delta^1$}
    \put(223,70){\vector(2,3){21}}
    \put(250,21){\vector(0,1){80}}
    \put(244,20){\vector(-2,3){23}}
    \put(205,5){\small $G\p(\Delta)$}
  \end{picture}
\caption{A standard set of height 3, its graph, and its canonicalized graph}
\phantomsection\label{figure:standardSetsAndGraphs}
\end{figure}
\end{center}

The following proposition is the second step of four in proving that
C4 decomposition and standard decomposition of labeled graphs are
equivalent.

\begin{pro}
\phantomsection\label{pro:canonicalization}
  Let $\Delta \subseteq \N^d$ be a finite standard set.  Then
  $G\p(\Delta)$, as defined above, is the canonicalization of the standard graph of $\Delta$.
\end{pro}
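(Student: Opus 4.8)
The plan is to verify directly that the canonicalization procedure described at the end of Section~\ref{sec:standardGraphsAndComponents}, when applied to $G(\Delta)$, produces exactly $G\p(\Delta)$. That procedure consists of deleting all nodes of label zero and then contracting every edge whose two endpoints carry the same label, merging them into a single node. I would organize the comparison into the three data of a labeled graph: nodes, labels, and edges.

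First, the zero-label deletion is vacuous here: every node of $G(\Delta)$ lies in $\Delta\p = q^d(\Delta)$, and $\lab{G(\Delta)}{\gamma} = \card{(q^d)^{-1}(\gamma)\cap\Delta}\geq 1$ for each such $\gamma$, so there is nothing to delete. The crux is then to understand the merge induced by contracting same-label edges. Writing $h_\gamma \defeq \lab{G(\Delta)}{\gamma}$, an edge $(\gamma+e_i,\gamma)$ of $G(\Delta)$ is a same-label edge precisely when $h_{\gamma+e_i}=h_\gamma$. Passing to the transitive closure, two nodes are merged if and only if they are joined by a path of $\pm e_i$ steps along which $h$ is constant---which is exactly the connectivity relation of Definition~\ref{dfn:canonicalization} restricted to one height value, since every intermediate point of such a path lies in the same isohypse. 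Hence the merged nodes are precisely the connected components of the isohypses $\Delta^a$, i.e.\ the nodes of $G\p(\Delta)$, and the common label surviving on a component is its constant height $a$, matching $\lab{G\p(\Delta)}{\Delta^a_b}=a$.

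For the edges I would argue that, after contraction, a surviving edge between merged nodes $\Delta^a_b$ and $\Delta^c_d$ corresponds exactly to an edge $(\gamma\p,\gamma)$ of $G(\Delta)$ with $\gamma\p=\gamma+e_i$, $\gamma\p\in\Delta^a_b$ and $\gamma\in\Delta^c_d$---which is verbatim the defining condition of $\edges{G\p(\Delta)}$ (several witnessing pairs collapse to a single edge, as the edge set is a set). Two small points need checking. No self-loops survive: if $\gamma+e_i$ and $\gamma$ lie in one merged node they have equal height, so that edge was itself a same-label edge and was consumed in the contraction; equivalently, an edge joining \emph{distinct} components must join \emph{distinct} heights. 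Finally I would confirm that $G\p(\Delta)$ is canonical in the sense of Definition~\ref{dfn:canlabeledGraph}: all labels $a\geq 1$ are positive; along any edge $(\Delta^a_b,\Delta^c_d)$ the height compatibility gives $a=h_{\gamma+e_i}\leq h_\gamma=c$, and $a\neq c$ by the previous remark, so $a<c$ strictly; and strictly increasing labels along edges preclude cycles.

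The main obstacle I anticipate is the contraction bookkeeping---making rigorous that the transitive closure of the same-label-edge relation is independent of the order in which edges are contracted and coincides with isohypse-connectivity, and that the induced edge set, together with the suppression of loops and parallel edges, reproduces $\edges{G\p(\Delta)}$ on the nose. Once that correspondence is pinned down, the remaining verifications reduce to a direct line-by-line match of the two definitions.
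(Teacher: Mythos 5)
Your proposal is correct and follows essentially the same route as the paper's proof: you identify the nodes merged by same-label-edge contraction with the connected components of the isohypses $\Delta^a$, and your observation that an edge joining two nodes of equal height forces them into the same connected component (so no loops survive and no further contraction is possible) is exactly the paper's key closing argument. Your additional explicit verification that $G\p(\Delta)$ satisfies the conditions of Definition~\ref{dfn:canlabeledGraph} (positive labels, strict increase along edges, acyclicity) is a harmless elaboration of what the paper leaves implicit in its discussion of canonicalization.
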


\begin{proof}
  Let $G(\Delta)$ be the standard graph of $\Delta$. 
  Then $\Delta\p$ is the node set of $G(\Delta)$, and two nodes get the same label if, and only if, 
  they lie in the same isohypse $\Delta^a$. 
  Moreover, the definition of $G(\Delta)$ shows that two nodes of this graph lying in the same isohypse
  are connected by a sequence of edges in that graph if, and only if, 
  they lie in the same connected component of some $\Delta^a$.
  So we may contract each connected component $\Delta^a_b$ of each isohypse to one node. 
  This is what the definition of $G\p(\Delta)$ does. 
  
  For finishing the proof, we have to show that no more pairs of nodes in $G\p(\Delta)$ may be contracted into one node. 
  Contraction only happens if two nodes have the same label and are connected by an edge. 
  Suppose that $\Delta^a_b$ and $\Delta^a_d$ are connected by an edge. 
  Then there exist $\gamma\p \in \Delta^a_b$ and $\gamma \in \Delta^a_d$ such that 
  $\gamma\p = \gamma + e_i$, so $\gamma\p$ and $\gamma$ lie in the same connected component of $\Delta^a$, 
  a contradiction. 
\end{proof}

%%%%%%%%%%%%

\section{From standard graphs with unique maximal nodes to standard sets}

For each standard set $\Delta$, the canonicalized graph $G\p(\Delta)$ is connected
and contains a unique node of maximal label, namely, the highest isohypse $\Delta^h$. 
This graph thus lies in the class $\mathcal{S}$ defined in the Introduction. 
Example \ref{ex:graphArisingFromSet} and Proposition \ref{pro:graphNotArisingFromSet} show that 
graphs in $\mathcal{S}$ may or may not arise from standard sets. 

\begin{ex}
\phantomsection\label{ex:graphArisingFromSet}
  Figure \ref{figure:lotsOfArrows} shows a standard graph which arises as the standard graph
  of a standard set in $\N^4$, namely, 
  \[
    \Delta = \left\{
    \begin{array}{c}
      (0,0,0,0), (0,0,0,1), (0,0,0,2), \\
      (1,0,0,0), (1,0,0,1), (0,1,0,0), (0,1,0,1), \\
      (0,0,1,0), (0,0,1,1), (0,1,1,0), (0,1,1,1), \\
      (1,1,0,0), (1,0,1,0)
    \end{array}
    \right\} .
  \]
  The picture on the right hand side of that figure shows $\Delta^3, \Delta^2$ and $\Delta^1 \subseteq \N^3$. 
\end{ex}

\begin{center}
\begin{figure}[ht]
  \begin{picture}(320,110)
    % graph
    \put(47.5,102){\small 3}
    \put(24,66){\vector(2,3){22}}
    \put(76,66){\vector(-2,3){22}}
    \put(21,55){\small 2}
    \put(75,55){\small 2}
    \put(20,18){\vector(0,1){32}}
    \put(74,18){\vector(-3,2){48}}
    \put(26,18){\vector(3,2){48}}
    \put(80,18){\vector(0,1){32}}
    \put(20,7){\small 1}
    \put(76,7){\small 1}
    % \Delta^3
    \put(130,40){\line(1,0){15}}
    \put(130,40){\line(0,1){15}}
    \put(130,55){\line(1,0){15}}
    \put(145,40){\line(0,1){15}}
    \put(130,55){\line(3,2){10}}
    \put(145,40){\line(3,2){10}}
    \put(145,55){\line(3,2){10}}
    \put(140,61.8){\line(1,0){15}}
    \put(155,46.8){\line(0,1){15}}
    \put(134,20){\small $\Delta^3$}
    % \Delta^2, front part
    \put(190,33.2){\line(1,0){15}}
    \put(190,33.2){\line(0,1){15}}
    \put(190,48.2){\line(1,0){15}}
    \put(205,33.2){\line(0,1){15}}
    \put(190,48.2){\line(3,2){10}}
    \put(205,33.2){\line(3,2){10}}
    \put(205,48.2){\line(3,2){10}}
    \put(200,55){\line(1,0){15}}
    \put(215,40){\line(0,1){15}}
    % \Delta^2, back part
    \put(200,55){\line(0,1){15}}
    \put(215,40){\line(1,0){15}}
    \put(200,70){\line(1,0){30}}
    \put(230,40){\line(0,1){30}}
    \put(200,70){\line(3,2){10}}
    \put(230,70){\line(3,2){10}}
    \put(230,40){\line(3,2){10}}
    \put(210,76.8){\line(1,0){30}}
    \put(240,46.8){\line(0,1){30}}
    \put(204,20){\small $\Delta^2$}
    % \Delta^1, upper part
    \put(260,48.2){\line(1,0){15}}
    \put(260,48.2){\line(0,1){15}}
    \put(260,63.2){\line(1,0){15}}
    \put(275,48.2){\line(0,1){15}}
    \put(260,63.2){\line(3,2){10}}
    \put(275,48.2){\line(3,2){10}}
    \put(275,63.2){\line(3,2){10}}
    \put(270,70){\line(1,0){15}}
    \put(285,55){\line(0,1){15}}
    % \Delta^1, lower part
    \put(275,33.2){\line(1,0){15}}
    \put(275,33.2){\line(0,1){15}}
    \put(275,48.2){\line(1,0){15}}
    \put(290,33.2){\line(0,1){15}}
    \put(290,33.2){\line(3,2){10}}
    \put(290,48.2){\line(3,2){10}}
    \put(285,55){\line(1,0){15}}
    \put(300,40){\line(0,1){15}}
    \put(274,20){\small $\Delta^1$}
  \end{picture}
\caption{A standard graph arising from a standard set in $\N^4$}
\phantomsection\label{figure:lotsOfArrows}
\end{figure}
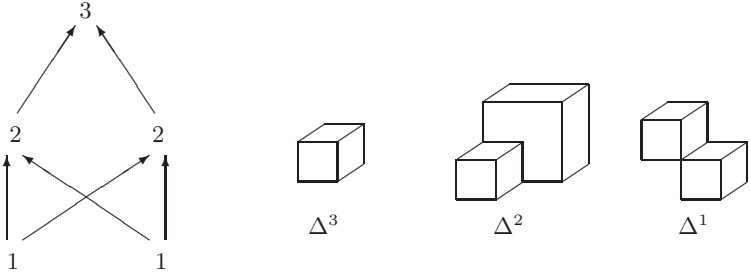
\end{center}

\begin{pro}
\phantomsection\label{pro:graphNotArisingFromSet}
  The graph shown in Figure \ref{figure:badArrows}
  does not arise as the standard graph of a standard set. 
\end{pro}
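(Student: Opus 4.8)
The plan is to argue by contradiction: I assume that the graph displayed in Figure \ref{figure:badArrows} equals $G\p(\Delta)$ for some finite standard set $\Delta \subseteq \N^d$ in some dimension $d$, and I derive an impossible configuration of lattice points. The crucial subtlety is that $d$ is not given in advance, so the argument must rule out every dimension at once; I therefore work only with the intrinsic combinatorial data that any realization must produce, namely the nested flag of standard sets in $\N^{d-1}$ determined by the heights.

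First I would record the structure that any realization imposes. For $a = 1, \ldots, h$ set $\Delta^{\ge a} \defeq \{\gamma \in \Delta\p : h_\gamma \ge a\}$; since $h_{\gamma + e_i} \le h_\gamma$, each $\Delta^{\ge a}$ is a standard set in $\N^{d-1}$, and these form a nested chain $\Delta\p = \Delta^{\ge 1} \supseteq \cdots \supseteq \Delta^{\ge h}$ whose consecutive differences are the isohypses $\Delta^a$. By Proposition \ref{pro:canonicalization} the nodes of $G\p(\Delta)$ are exactly the connected components of the $\Delta^a$, their labels are the heights $a$, and an edge $(\Delta^a_b, \Delta^c_d)$ records that some $\gamma\p \in \Delta^a_b$ and $\gamma \in \Delta^c_d$ satisfy $\gamma\p = \gamma + e_i$. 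I would also extract two forced features of any such graph: since $\Delta^{\ge h}$ contains the origin it is connected, which identifies the unique maximal node with $\Delta^h$; and since $\Delta\p$ is downward closed, moving from any point of a non-maximal component toward the origin must eventually raise the height, so every node other than the top has an outgoing edge to a strictly higher label and hence a directed path to the top.

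Next I would translate the specific edges of the displayed graph into concrete adjacencies and use downward-closedness of the $\Delta^{\ge a}$ to pin down which points of $\N^{d-1}$ must lie in which isohypse. Starting from the top node and working outward, each edge forces a pair of $e_i$-adjacent points in prescribed isohypses, and downward-closedness then forces the presence of all points below them together with constraints on their heights. The contradiction I expect is that two nodes the graph declares to be distinct components of the same isohypse are in fact forced to be joined through the filled-in region, so they would have been contracted into a single node of $G\p(\Delta)$; or, symmetrically, that a forced adjacency between two components produces an edge that the figure omits. Either outcome contradicts the assumption that the displayed graph equals $G\p(\Delta)$.

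The hard part will be this middle step. Because the realizing dimension is arbitrary, I cannot simply exhibit coordinates and must instead reason purely about which incidences among components are compatible with a single order-reversing height function on one downward-closed set. Concretely, the obstacle is to show that the particular pattern of present and absent edges in Figure \ref{figure:badArrows} cannot be realized simultaneously: the edges that are present force enough of the lattice to be filled that the ``separating'' region needed to keep two same-label components apart is squeezed out, forcing either a missing contraction or a missing edge. Making this ``squeezing'' precise — tracking the minimal set of forced points and proving the required separation is impossible in every dimension — is where the real content lies.
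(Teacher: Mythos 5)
There is a genuine gap: you set up the right framework (assume the figure's graph is $G\p(\Delta)$, work with isohypses and downward-closedness of the projection, aim for a contradiction), and you even anticipate the correct \emph{form} of contradiction, but the actual argument --- what you yourself call the ``middle step'' where ``the real content lies'' --- is never carried out. You describe the kind of obstruction you expect rather than deriving one, so the proposal is a plan for a proof, not a proof. The paper's proof shows exactly what that missing content is. Since the label-$1$ node is a single (hence connected) isohypse and the edges $(1,2)$ and $(1,4)$ are present, there is a lattice path $\sigma=\gamma_0,\gamma_1,\ldots,\gamma_N=\tau$ with unit steps $\gamma_{k+1}=\gamma_k\pm e_i$, whose interior lies in $\Delta^1$, starting at a point $\sigma\in\Delta^4$ and ending at a point $\tau\in\Delta^2$. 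One takes such a path of \emph{minimal} length $N$ and, exploiting the two absent edges $(1,3)$ and $(2,4)$, runs a case analysis (three structural claims about the minimal path, each alternative producing either a strictly shorter path or a forbidden edge) to force $N=2$. That yields a single point $\beta\in\Delta^1$ with $\beta-e_i\in\Delta^2$ and $\beta-e_j\in\Delta^4$; then $\beta-e_i-e_j$ lies in the projection by downward-closedness, its height is at least $4$ because heights are order-reversing, so it lies in $\Delta^4$, and the adjacency between $\beta-e_i$ and $\beta-e_i-e_j$ forces the edge $(\Delta^2,\Delta^4)$, which the figure omits. None of this minimal-path machinery, nor any substitute for it, appears in your proposal; without it there is no proof that the ``squeezing'' you invoke actually happens in every dimension $d$.

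A secondary point: half of the contradiction you anticipate --- two nodes that the graph declares to be distinct components of the same isohypse being forced to merge --- cannot occur for this graph at all, since its four labels are pairwise distinct. The only available contradiction is a forced-but-absent edge, and the two absent edges $(1,3)$ and $(2,4)$ are precisely the resource that the paper's case analysis consumes: $(1,3)$ is used repeatedly to control where perturbed paths can live, and $(2,4)$ is what the final forced adjacency violates. Recognizing that the proof must be organized around those two missing edges, and then executing the minimality argument, is the content your proposal defers.
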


\begin{center}
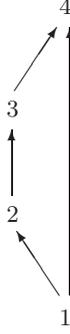
\begin{figure}[ht]
  \begin{picture}(34,130)
    % canonicalized graph
    \put(30,119){\small $4$}
    \put(10,80){\small $3$}
    \put(10,40){\small $2$}
    \put(30,1){\small $1$}
    \put(34,12.5){\vector(0,1){101.5}}
    \put(12,50){\vector(0,1){25}}
    \put(30,12.5){\vector(-2,3){16}}
    \put(13,90){\vector(2,3){16}}
  \end{picture}
\caption{A graph not arising from a standard set}
\phantomsection\label{figure:badArrows}
\end{figure}
\end{center}

\begin{proof}
  Assume that $\Delta \subseteq \N^d$ is a standard set whose standard graph is the given graph $G$. 
  In particular, the nodes of $G$ are the isohypses $\Delta^i$, for $i=1,2,3,4$. 
  We claim that there exists an element $\beta \in \Delta^1$
  and $i, j \in \set{1, \ldots, d-1}$ such that $\beta - e_i \in \Delta^2$ and $\beta - e_j \in \Delta^4$. 
  This will finish the proof, since $\beta - e_i - e_j$ will then lie in $\Delta$. 
  But $\beta - e_i - e_j$ can lie in neither $\Delta^1$ nor $\Delta^2$ nor $\Delta^3$, 
  since either of these inclusions would contradict the standard set property of $\Delta$. 
  However, an inclusion $\beta - e_i - e_j \in \Delta^4$ 
  would force an edge from node $\Delta^2$ to node $\Delta^4$ in the standard graph of $\Delta$, which isn't there. 
  
  So we have to prove the above assertion. 
  There exists elements $\sigma \in \Delta^4$ and $\tau \in \Delta^2$ and a sequence $(\gamma_k)_{k=0}^N$ such that 
  \begin{itemize}
    \item its subsequence $(\gamma_k)_{k=1}^{N-1}$ lies in $\Delta^1$, 
    \item its starting point $\gamma_0$ is $\sigma$, 
    \item its end point $\gamma_N$ is $\tau$, and 
    \item it has the property that for all $k$, $\gamma_{k+1} = \gamma_k \pm e_i$ for some $i$. 
  \end{itemize}
  Take $\sigma$, $\tau$ and $(\gamma_k)$ sharing these properties 
  such that, in addition, $N$, the length of the sequence $(\gamma_k)$ is minimal. 
  If $N = 2$, then $\beta \defeq \gamma_1$ is of the desired shape. 
  We now assume that $N>2$, and are going to show that this assumption leads to a contradiction. 
  For doing so, we prove three claims concerning the sequence $(\gamma_k)$. 
  The first claim is that for all $k < N$,
  \begin{equation}
  \phantomsection\label{eqn:gamma_k}
    \gamma_k = \sigma + \sum_{i \in I_k} e_i
  \end{equation}
  for some multiset of indices $I_k$. Note that $\gamma_k \in \Delta^1$ for all $k$ in question. 
  For $k=0,1$, equation \eqref{eqn:gamma_k} is evident. 
  We assume that the equation holds for $k$ and prove it to hold for $k+1$. 
  Suppose that $\gamma_{k+1} = \sigma + \sum_{i \in I_k} e_i - e_j$ for some $j \notin I_k$. 
  Then, in particular, $\sigma\p \defeq \sigma - e_j \in \Delta^4$ and $\gamma_{k+1} \in \Delta^1$. 
  Consider the sequence $(\gamma_l\p)_{l=0}^{N-1}$, where 
  \[
    \gamma_l\p \defeq 
    \begin{cases}
    \gamma_l - e_j & \text{ for } l < k, \\
    \gamma_{l+1} & \text{ for }  l \geq k. 
    \end{cases}
  \]
  This sequence is one element shorter than the original sequence $(\gamma_k)$. 
  Like the original sequence, it starts in $\Delta^4$ and ends in $\Delta^2$. 
  A priori the elements $\gamma\p_m$, for $m = 1, \ldots, k-1$, 
  may lie in $\Delta^1$, $\Delta^2$, $\Delta^3$ or $\Delta^4$. 
  \begin{itemize}
    \item If all of them lie in $\Delta^1$, the sequence $(\gamma\p_l)$ contradicts the minimality of $N$. 
    \item If $\gamma\p_m \in \Delta^2$, the sequence $(\gamma\pp_p)_{p = 0}^{m+1}$, where
    \[
      \gamma_p\pp \defeq 
      \begin{cases}
      \gamma_p & \text{ for } l \leq m, \\
      \gamma\p_m & \text{ for }  p = m+1
      \end{cases}
    \]
    contradicts the minimality of $N$.
    \item If $\gamma\p_m \in \Delta^3$, we obtain an edge from node $\Delta^1$ to node $\Delta^3$, which isn't there. 
    \item If $\gamma\p_m \in \Delta^4$, we consider the
      largest index $M$ such that $\gamma\p_M\in \Delta^4$ and consider
      the subsquence $\gamma
      \p_M,\dots,\gamma\p_N$. The condition $\gamma_i \in \Delta^1$
      implies that $\gamma\p_i\in \Delta^1 \cup \Delta^2 \cup \Delta^4$, since there is no
      node from $\Delta^1$ to $\Delta^3$. Thus the first term of  $\gamma
      \p_M,\dots,\gamma\p_N$ lies in $\Delta^4$, and the other terms 
      in $\Delta^1\cup \Delta^2$. The
      subsequence  $\gamma
      \p_M,\dots,\gamma\p_{M\p}$, where $M\p\geq M$ is the smallest index 
      with $\gamma\p_{M\p}\in \Delta^2$, contradicts the minimality of
      $N$. 
  \end{itemize}
%  We therefore obtain that either $\gamma_{k+1} = \sigma + \sum_{i \in I_k} e_i + e_j$ for some $j \notin I_k$, 
%  or $\gamma_{k+1} = \sigma + \sum_{i \in I_k} e_i - e_j$ for some $j \in I_k$. 
  This finishes the proof of the first claim. 
  
  Our second claim is that $I_k \subseteq I_{k+1}$ for all sets appearing in \eqref{eqn:gamma_k}. 
  This is true for $I_0 \subseteq I_1$; moreover, since $\gamma_{k+1} = \gamma_k \pm e_i$, 
  our first claim shows that either $I_k \subseteq I_{k+1}$ or $I_k \supseteq I_{k+1}$ holds. 
  Let $m$ be the smallest index such that $I_m \supseteq I_{m+1}$. 
  Then the sequence $(\gamma_k)_{k=0}^{m+1}$ is obtained by adding to $\sigma$
  a number of $e_i$, one by one, and finally subtracting one of them, say $e_j$. 
  We obtain a shorter sequence $(\gamma\p_k)_{k=0}^{m-1}$ by adding to $\sigma$ the same sequence of $e_i$ as above, 
  but leaving out $e_j$. 
  The same arguments as the ones from the four bulleted items above then lead to a contradiction. 
  This finishes the proof of the second claim. 
  
  Our third claim is that $\tau$, the final member of our sequence $(\gamma_k)$, takes the shape 
  \[
    \tau = \gamma_N = \gamma_{N-1} - e_j ,
  \]
  for some $e_j \notin I_{N-1}$. 
  The complementary cases include $\gamma_N = \gamma_{N-1} - e_j$ for some $e_j \in I_{N-1}$, 
  which immediately contradicts minimality of $N$, and $\gamma_N = \gamma_{N-1} + e_j$ for some $e_j$. 
  In the latter case, the inclusion $\gamma_N = \tau \in \Delta^2$ shows that $\gamma_{N-1}$ would also lie in $\Delta^2$, 
  a contradiction. 
  This finishes the proof of the third claim. 
  
  The sequence $(\gamma_k)_{k=0}^N$ is therefore obtained by adding to $\sigma$
  a number of $e_i$, one by one, and finally subtracting some $e_j$ which is not found among the $e_i$ previously added. 
  We denote by $e_u$ the last element from the sequence of $e_i$ which we add, 
  that is, the one element which we add for passing from $\gamma_{N-2}$ to $\gamma_{N-1}$. 
  Consider $\rho \defeq \tau - e_u$. 
  Then $\rho$ may lie in $\Delta^2$, $\Delta^3$ or $\Delta^4$. 
  \begin{itemize}
    \item If $\tau\p \in \Delta^2$, the sequence $(\gamma\p_l)_{l = 0}^{N-1}$, where
    \[
      \gamma_l\p \defeq 
      \begin{cases}
      \gamma_l & \text{ for } l \leq N-2, \\
      \tau\p & \text{ for }  l = N-1
      \end{cases}
    \]
    contradicts the minimality of $N$.
    \item If $\tau\p \in \Delta^3$, we obtain an edge from node $\Delta^1$ to node $\Delta^3$, which isn't there. 
    \item If $\tau\p \in \Delta^4$, we obtain an edge from node $\Delta^2$ to node $\Delta^4$, which isn't there. 
  \end{itemize}
  So we have disproved the assumption that $N>2$. The proposition follows. 
\end{proof}

The graphs in Figures \ref{figure:lotsOfArrows} and
\ref{figure:badArrows} define relations on their respective node sets
which both fail to be transitive.  So one might not guess that
transitivity of graphs in $\mathcal{S}$ is crucial for such graphs to
arise from standard sets.  That, however, is indeed true, as we shall
see in Proposition \ref{pro:graphToStdSet} below.  Let us first
establish that passing from a graph to its transitive closure has no
impact on standard decompositions.

\begin{lmm}
\phantomsection\label{lmm:transitiveClosure}
  Let $G$ be a standard graph and $\overline{G}$ its transitive closure. 
  Then the standard decompositions of $G$ and $\overline{G}$ are in canonical bijection. 
\end{lmm}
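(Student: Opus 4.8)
The plan is to exploit the fact that passing to the transitive closure changes only the edge set, not the node set or the labeling, and that it leaves the reachability preorder on the nodes unchanged. Concretely, I would take $\overline{G}$ to have $\nodes{\overline{G}} = \nodes G$ and $\labeling{\overline{G}} = \labeling G$, with edge set $\edges{\overline{G}} = \setBuilderBarLeft{(a,b)}{a \neq b \text{ and } b \text{ is reachable from } a \text{ in } G}$; excluding the pairs $a = b$ keeps $\overline{G}$ loopless, and since the edge set is a genuine set there are no parallel edges either, so $\overline{G}$ stays in the class of graphs under consideration. Because addition and subtraction in this paper are carried out only among graphs sharing one common node and edge set, every standard component of $G$ is just a $0$-$1$ node-labeling carried by $\edges G$, and every standard component of $\overline{G}$ the same labeling carried by $\edges{\overline{G}}$. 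The candidate bijection is therefore the obvious one: send a standard component $H$ of $G$ to the $0$-$1$ graph $\overline{H}$ with the same node-labeling but edge set $\edges{\overline{G}}$, and apply this elementwise to each member of a multiset. The whole argument reduces to checking that this map and its inverse are well defined.

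The technical heart is a single compatibility principle: for any labeling $L \colon \nodes G \to \Z$, the labeling $L$ satisfies $L(a) \leq L(b)$ for all $(a,b)\in\edges G$ if, and only if, it satisfies the same inequality for all $(a,b)\in\edges{\overline{G}}$. One direction is immediate because $\edges G \subseteq \edges{\overline{G}}$. For the other direction I would argue along paths: if $(a,b) \in \edges{\overline{G}}$, then there is a directed path $a = u_0, u_1, \ldots, u_m = b$ in $G$, and chaining $L(u_0) \leq L(u_1) \leq \cdots \leq L(u_m)$ — valid since $L$ respects each single edge — yields $L(a) \leq L(b)$. Applying this principle with $L = \labeling G$ shows at once that $\overline{G}$ is itself standard, which is needed before we can even speak of its standard components.

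Next I would invoke the principle twice to prove that a $0$-$1$ labeling is a standard component of $G$ exactly when it is a standard component of $\overline{G}$. First, standardness of the $0$-$1$ graph itself is precisely compatibility of its labeling with the ambient edges, and the principle makes this condition identical for $\edges G$ and $\edges{\overline{G}}$. Second, $\gsub G H$ and $\gsub{\overline{G}}{\overline{H}}$ carry the \emph{identical} node-labeling $v \mapsto \lab G v - \lab H v$ (only the edge sets differ), so its non-negativity is a per-node condition insensitive to edges, while its edge-compatibility is again transported between $\edges G$ and $\edges{\overline{G}}$ by the principle; the clause that not all labels vanish is visibly preserved. Hence $H \mapsto \overline{H}$ is a bijection between the standard components of $G$ and those of $\overline{G}$.

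Finally, because this map is the identity on node-labelings, it preserves sums node by node: a multiset $\hsym$ satisfies $\sum\hsym = G$ if, and only if, its image satisfies $\sum\overline{\hsym} = \overline{G}$, since the labels agree at every node and the two total graphs differ only in their fixed edge sets. Thus the map restricts to a bijection $\decom G \to \decom{\overline{G}}$, and it is canonical as it involves no choices. The only step demanding real care — the one I would treat as the main obstacle — is the backward direction of the compatibility principle, namely confirming that closedness under single edges genuinely propagates along arbitrary directed paths; once that is in hand, everything else is formal bookkeeping.
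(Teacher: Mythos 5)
Your proposal is correct and follows essentially the same route as the paper: the bijection in both cases simply transports the $0$-$1$ node-labeling of each component from the edge set $\edges G$ to $\edges{\overline{G}}$ (and back by deleting the added edges). The paper's proof merely asserts that the resulting multisets are again standard decompositions, whereas you verify this via the path-chaining compatibility principle — a worthwhile detail, but not a different argument.
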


\begin{proof}
  Given a standard decomposition $\hsym$ of $G$, 
  replace every member $H$ by its transitive closure $\overline{H}$. 
  The resulting multiset $\overline{\hsym}$ is a standard decomposition of $\overline{G}$. 
  Given a standard decomposition $\mathcal{K}$ of $\overline{G}$, 
  we delete from every member $K$ all edges that appear in $\overline{G}$ but not in $G$, 
  and call the resulting graph $K^\circ$. The resulting multiset $\mathcal{K}^\circ$ is a standard decomposition of $G$. 
  The maps $\hsym \mapsto \overline{\hsym}$ and $\mathcal{K} \mapsto \mathcal{K}^\circ$
  are mutual inverses. 
\end{proof}

The following proposition is the third step of four in
proving that C4 decomposition and standard decomposition of labeled
graphs are equivalent.

\begin{pro}
\phantomsection\label{pro:graphToStdSet}
  Let $G$ be a canonical, connected and transitive standard graph containing a unique node of maximal label. 
  Then there exists a standard set $\Delta \subseteq \N^d$, for some $d \geq 1$, 
  whose canonicalized standard graph is $G$. 
\end{pro}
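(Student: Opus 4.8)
The plan is to realize $G$ directly as $G\p(\Delta)$ by building the nested family of standard sets that such a graph encodes. By Definition \ref{dfn:canonicalization}, giving a standard set $\Delta \subseteq \N^d$ is the same as giving a height function on $\N^{d-1}$, i.e.\ a descending chain of standard sets $S_1 \supseteq S_2 \supseteq \cdots \supseteq S_h$ in $\N^{d-1}$ (with $S_a = \{\gamma : h_\gamma \geq a\}$); the nodes of $G\p(\Delta)$ of label $a$ are then the connected components of the isohypse $S_a \setminus S_{a+1}$, and there is an edge from a label-$a$ component to a label-$c$ component ($a<c$) exactly when the former contains a point $\gamma + e_i$ whose $e_i$-predecessor $\gamma$ lies in the latter. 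So my goal is to choose the $S_a$ so that these components and adjacencies reproduce the nodes, labels and edges of $G$.

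First I would record the structural consequences of the hypotheses. Since $G$ is transitive and canonical, the edge relation is a strict partial order $<$ with $u<v \Rightarrow \lab G u < \lab G v$, and transitivity makes each out-neighbourhood $N^+(u)=\{v : u<v\}$ an \emph{up-set} of this order. Using transitivity and connectedness together with the unique maximal-label condition, I would check that the maximal node $w$ is reachable from every node, so that for each $a$ the vertices of label $\geq a$ induce a connected subgraph (in fact one in which every vertex has an edge to $w$). This connectivity of the upper levels is exactly the feature that a chain $S_1 \supseteq \cdots \supseteq S_h$ can realize, since every $S_a$ is automatically a connected standard set.

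The construction itself I would carry out from the top downwards, one label at a time, giving each node its own connected component (an ``arm'') and a private coordinate direction. I start with $S_h := R_w$, a small connected standard set (e.g.\ a box) representing $w$. Assume $S_{a+1}$ has been built as a standard set whose isohypse components are exactly the regions $R_v$ for the already-placed nodes $v$ (those of label $>a$). For each node $u$ of label $a$ I then adjoin a connected arm $R_u$ of new cells, all of height $a$, attached so that $R_u$ is coordinate-adjacent (in the $+e_i$ sense, with $R_u$ sitting just above) to precisely the regions $R_v$ with $v \in N^+(u)$, and to no others. Routing each arm along a fresh coordinate axis keeps distinct arms --- in particular distinct arms of the same label --- from meeting, so that each $R_u$ is a full component of its own isohypse and no spurious same-label edges appear; throughout I maintain the invariants that the union is downward-closed (a standard set) and that the height function stays antitone. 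Setting $S_a$ to be this enlarged set and continuing down to $a=1$ produces $\Delta$, and by construction $G\p(\Delta)=G$.

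The hard part will be the attachment step: arranging that each arm $R_u$ touches \emph{all} of the higher regions $R_v$ with $u<v$ while touching \emph{none} of the regions $R_{v'}$ with $u \not< v'$, and without accidentally merging two arms, all under the rigid downward-closure constraint. This is precisely where transitivity is indispensable: because $N^+(u)$ is an up-set and the regions indexed by it form a connected upper piece of the staircase already built, a single connected arm can be made to reach exactly those regions; and any region that geometric proximity would force the arm to touch is one lying above some $R_v$ with $v\in N^+(u)$, hence indexed by an element of the up-set $N^+(u)$ and so legitimately an out-neighbour of $u$. I expect verifying this ``no forbidden adjacency'' property, together with the bookkeeping of the auxiliary coordinates (the dimension $d-1$ grows with the number of arms, but stays finite), to be the only genuinely delicate points; the remaining checks that $\Delta$ is a standard set and that $G\p(\Delta)=G$ are then routine from the invariants.
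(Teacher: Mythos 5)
Your route differs in organization from the paper's: the paper runs a nested double induction (outer on the number of nodes, inner on the number of edges), removing a node of minimal label, realizing the smaller graph, then re-attaching the node with one edge via a ``pillar'' in a fresh coordinate, and creating the remaining edges one at a time via a ``mushroom'' (the down-closure of a shifted copy of the cylinder over the target component). You instead build $\Delta$ in a single top-down pass, one isohypse at a time, one region per node, one fresh coordinate per region. The geometric core is the same --- transitivity legitimizes every adjacency forced by downward closure --- and your plan can be completed, but as written it has two genuine gaps.

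First, the central object, the arm $R_u$, is never constructed: both its existence (``a single connected arm can be made to reach exactly those regions'') and the ``no forbidden adjacency'' property are asserted, and you yourself defer them as ``the hard part''. They do go through, but only once the arm is pinned down, e.g.\ pick a fresh coordinate $e_{i_u}$ and a point $\gamma_v$ in each region $R_v$ with $v \in N^+(u)$, and set
\[
  R_u \defeq \bigcup_{v \in N^+(u)} \setBuilder{\delta + e_{i_u}}{\delta \in \N^{d-1},\ \delta \le \gamma_v},
\]
all points of height $a$. Downward closure forces exactly this shape (once $\gamma_v + e_{i_u}$ lies in the projection, the whole shifted down-set must); the heights are consistent because every $\delta \le \gamma_v$ has height at least $\lab G v > a$; the arm is connected because each piece contains $e_{i_u}$; distinct arms never meet and old isohypses and adjacencies are untouched because arm points are the only points with $i_u$-th coordinate equal to $1$. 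The regions adjacent to $R_u$ are then exactly those containing some $\delta \le \gamma_v$, and a monotone lattice path from $\gamma_v$ down to $\delta$ exhibits each such region as reachable by edges from $R_v$ in the already-built graph; by the inductive invariant that this graph is the induced (hence transitive) subgraph of $G$ on labels greater than $a$, that region equals $R_v$ or is an out-neighbour of $v$, hence lies in $N^+(u)$. This chain of reasoning is what your phrase ``lying above some $R_v$'' must be unfolded into; without it the proposal is a plan, not a proof.

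Second, your preliminary ``check'' is false: reachability of the maximal node $w$ from every node does \emph{not} follow from canonical, connected, transitive and unique maximal label. The graph with nodes $a,b,w$, labels $1,2,3$ and edges $(a,b)$, $(a,w)$ satisfies all four hypotheses, yet $b$ has no directed path to $w$; moreover it is not the canonicalized graph of any standard set, since the origin has maximal height and so lies in $R_w$, and a point of $R_b$ of minimal coordinate sum would have a predecessor of strictly larger height, forcing an edge out of $b$, necessarily to $w$ --- a contradiction. So this step cannot be carried out, and your construction indeed breaks on any non-maximal node $u$ with $N^+(u)=\emptyset$, which can never receive an arm adjacent to nothing. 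To be fair, the paper's proof tacitly assumes the same property (``each node \ldots is the starting point of a sequence of edges ending up in $v_h$''), so Proposition \ref{pro:graphToStdSet} really requires the additional hypothesis that $w$ is reachable from all nodes; this is harmless for Theorem \ref{thm:graphsVsC4}, because Proposition \ref{pro:reductionToUniqueMaximalNode} supplies exactly that property. But you should take it as a hypothesis, not claim to derive it.
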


\begin{proof}
  Upon using the terminology of Definition \ref{dfn:canonicalization}, we
  denote by $G\p(\Delta)$ the canonicalized standard graph of a
  standard set $\Delta$.  We prove the proposition by two nested
  inductions, the outer over the number of nodes of $G$, and the inner
  over the number of edges of $G$.  The base case of the outer
  induction is trivial.  As for the outer induction step, let $G$ be a
  given connected and transitive standard graph containing a unique
  node $v_h$ of maximal label, $h$. 
  Let $v_0$ be a node of minimal label. 
  We remove from $G$ the node $v_0$, 
  along with all edges whose source is $v_0$.
  We call the graph thus obtained $G_0$. 
  Then $G_0$ is also canonical, connected and transitive. 
  Canonicity and transitivity are obvious. As for connectedness, 
  we note that each node in $G$ other than the node $v_0$ is the starting point of a 
  sequence of edges ending up in $v_h$, 
  which sequence does not pass through $v_0$ by minimality of $v_0$ 
  and canonicity of $G$. 
  Moreover, when replacing $G$ by $G_0$, 
  we do not change the labels of the remaining nodes. 
  Thus $G_0$ contains a unique node of maximal label. We may therefore assume
  that there exists a standard set $\Delta_0 \subseteq \N^d$, for some
  $d$, such that $G\p(\Delta_0) = G_0$.
  
  For establishing the outer induction step, we shall put the node $v_0$ back into the graph. 
  Transitivity of $G$ implies that this graph contains an edge from $v_0$ to $v_h$. 
  Let $G_1$ be the (transitive) graph that arises from $G_0$ by adding the one node $v_0$ and the one edge $(v_0,v_h)$. 
  We now construct a standard set $\Delta_1$ such that $G\p(\Delta_1) = G_1$. 

  Consider the embedding 
  $\iota: \N^d \hookrightarrow \N^{d + 1} : \beta \mapsto (0, \beta)$.
  The transition from $\Delta_0$ to $\iota(\Delta_0)$ does not affect the standard graph of $\Delta_0$. 
  We may therefore assume that $\Delta_0 \subseteq \N^d$ is contained in the hyperplane $\set{\beta_1 = 0}$ of $\N^d$. 
  The node $v_h \in G_0$ corresponds to the isohypse $(\Delta_0)^h$. 
  Let $h_0 < h$ be the label of $v_0$. We may assume that $v_0 > 1$. The set 
  \begin{equation}
  \phantomsection\label{eqn:Delta0ToDelta1}
    \begin{split}
      \Delta_1 & \defeq \Delta_0 \cup M_1, \text{ where }\\
      M_1 & \defeq \setBuilder{(1,0,\ldots,0,\beta_d)}{0\leq \beta_d \leq h_0-1}
    \end{split}
  \end{equation}
  is standard. 
  See Figure \ref{figure:pilar} for a visualization of the transition from $\Delta_0$ to $\Delta_1$. 
  For $a \neq h_0$, the isohypses $(\Delta_0)^a$ and $(\Delta_1)^a$ are identical. 
  The isohypse $(\Delta_1)^{h_0}$ is $(\Delta_0)^{h_0}\cup 
q^d(M_1) =(\Delta_0)^{h_0}\cup \set{e_1}$. 
  When passing to $G\p(\Delta_1)$, we see that this graph arises from $G\p(\Delta_0)$ 
  by adding the one node $ q^d(M_1) $ and the one edge connecting that new node and $(\Delta_1)^h$. 
  This establishes the outer induction step, and at the same time the inner induction basis. 
  
  As for the inner induction step, we may assume to have a transitive graph $G_1$
  \begin{itemize}
    \item with the same nodes and the same labels as $G$, 
    \item and a distinguished node $v_0$ 
    \item such that all edges but those with source $v_0$ agree in $G$ and $G_1$,
  \end{itemize}
  along with a standard set $\Delta_1 \subseteq \N^d$ such that $G\p(\Delta_1) = G_1$. 
  Let $v_1$ be a node of $G$ such that $(v_0, v_1)$ is an edge in $G$, 
  but our original graph $G$ contains no chain of edges from $v_0$ to $v_1$ of length more than 1. 
  We may assume that $\lab G {v_0} < \lab G {v_1}$.
  Denote by $G_2$ the graph that arises from $G_1$ by adding the edge $(v_0, v_1)$. 
  We will prove the existence of a standard set $\Delta_2$ such that $G\p(\Delta_2) = G_2$. 
  This will establish the inner induction step, and finish the proof of the proposition. 
  
  Analogously as above, we assume that $\Delta_1 \subseteq \N^d$ is contained in 
  the hyperplane $\set{\beta_1 = 0}$ of $\N^d$. 
  The choice of $v_1$ implies that $G_2$ is again transitive. 
  For $i = 0,1$, the node $v_i \in G_1$ corresponds to a connected component $C_i$ of $(\Delta_1)^{h_i}$, 
  where $h_i$ is the label of $v_i$. 
  The set 
  \begin{equation}
  \phantomsection\label{eqn:Delta1ToDelta3/2}
    \begin{split}
      \Delta_{1\frac{1}{2}} & \defeq \Delta_1 \cup M_{\frac{1}{2}}, \text{ where }\\
      M_{1\frac{1}{2}} & \defeq \Bigl(\cup_{\alpha \in \N^d}\left((q^d)^{-1}(C_1)\cap\Delta_1 + e_1 - \alpha\right)\Bigr)\cap\N^d
    \end{split}
  \end{equation}
  is standard. 
  See the first two pictures in Figure \ref{figure:mushroom}
  for a visualization of the transition from $\Delta_1$ to $\Delta_{1\frac{1}{2}}$: 
  We create a copy of the set $(q^d)^{-1}(C_1) \cap \Delta_1$ in the hyperplane $\set{\beta_1 = 1}$ of $\N^d$
  and subsequently pass to the smallest standard set containing both $\Delta_1$ and that copy. 
  Transitivity of $G_1$ implies that $G\p(\Delta_{1\frac{1}{2}}) = G\p(\Delta_1)$. 
  Indeed, for all heights $a \neq h_1$, 
  the connected components of $(\Delta_{1\frac{1}{2}})^a$ are identical to 
  of the connected components of $(\Delta_1)^a$. 
  For height $h_1$, the same is true for those connected components of $(\Delta_{1\frac{1}{2}})^{h_1}$ 
  that do not project to $C_1$ under $q^d$. 
  The connected component $C_1$ of $(\Delta_1)^{h_1}$, however, has a much larger counterpart in $\Delta_{1\frac{1}{2}}$, 
  namely, the union of $C_1$ and the set $q^d(M_{1\frac{1}{2}})$. 
  As for edges in $G\p(\Delta_{1\frac{1}{2}})$ emerging from node $C_1 \cup q^d(M_{1\frac{1}{2}})$, 
  the presence of $M_{1\frac{1}{2}}$ obviously leads to new adjacencies in connected components 
  of isohypses of $\Delta_{1 \frac{1}{2}}$. 
  But transitivity of $G_1$ guarantees that none of those adjacencies lead to an edge in $G\p(\Delta_{1\frac{1}{2}})$
  that does exist in $G\p(\Delta_1)$. So the graphs $G\p(\Delta_1)$ and $G\p(\Delta_{1\frac{1}{2}})$ are identical. 
  
  However, we do not want another standard set with the same canonicalized graph, 
  but rather a graph with one additional edge. 
  We obtain that edge by applying the same trick once more, defining 
  \begin{equation}
  \phantomsection\label{eqn:Delta3/2ToDelta2}
    \begin{split}
      \Delta_2 & \defeq \Delta_1 \cup M_{1\frac{1}{2}} \cup M_2, \text{ where }\\
      M_2 & \defeq \Bigl(\cup_{\alpha \in \N^d}\left((q^d)^{-1}(C_1) + e_1 - \alpha\right)\Bigr)\cap\N^d.
    \end{split}
  \end{equation}
  This is another standard set. 
  See the last two pictures in Figure \ref{figure:mushroom}
  for a visualization of the transition from $\Delta_{1\frac{1}{2}}$ to $\Delta_2$:
  We also create a copy of the set $(q^d)^{-1}(C_1) \cap \Delta_1$ in the hyperplane $\set{\beta_1 = 1}$ of $\N^d$
  and subsequently pass to the smallest standard set containing both $\Delta_1$ and that copy. 
  For all heights $a \neq h_0, h_1$, 
  the connected components of $(\Delta_2)^a$ are identical to
  the connected components of $(\Delta_{1\frac{1}{2}})^a$. 
  For heights $a = h_0, h_1$, the same is true for those connected components of $(\Delta_2)^a$ 
  that do not project to $C_0$ or $C_1$. 
  Note that the sets $M_{1\frac{1}{2}}$ and $M_2$ will in general intersect. 
  %The isohypse of $C_0$, however, does not contain any elements of $M_2$. 
  The counterpart of $C_1$ in $\Delta_2$ is the union $C_1 \cup M_{1\frac{1}{2}}$; 
  and the counterpart of $C_0$ in $\Delta_2$ is $(C_0 \cup M_2) \setminus M_{1\frac{1}{2}}$. 
  The graph $G\p(\Delta_2)$ contains all the edges that appear in $G\p(\Delta_{1\frac{1}{2}})$, 
  plus an edge from node $(C_0 \cup M_2) \setminus M_{1\frac{1}{2}}$
  to node $C_1 \cup M_{1\frac{1}{2}}$ : the extra edge exists since
  $(1,0,\dots,0,h_1)\in C_1\cup M_{1\frac{1}{2}}$ and $z_0+e_1\in
  (C_0 \cup M_2) \setminus M_{1\frac{1}{2}}$ for $z_0 \in (q^d)^{-1}(C_0)$.
  This establishes the inner induction step. 
\end{proof}

\begin{center}
\begin{figure}[ht]
  \begin{picture}(300,150)
    %%%%%%%%%% left picture
    % left picture
    % highest step
    \put(20,16.8){\line(1,0){90}}
    \put(20,16.8){\line(0,1){120}}
    \put(20,136.8){\line(1,0){30}}
    \put(30,143.6){\line(1,0){30}}
    \put(20,136.8){\line(3,2){10}}
    \put(50,136.8){\line(3,2){10}}
    % second step
    \put(50,106.8){\line(1,0){15}}
    \put(60,113.6){\line(1,0){15}}
    \put(50,106.8){\line(3,2){10}}
    \put(65,106.8){\line(3,2){10}}
    \put(50,106.8){\line(0,1){30}}
    \put(60,113.6){\line(0,1){30}}
    % third step
    \put(65,91.8){\line(1,0){15}}
    \put(75,98.6){\line(1,0){15}}
    \put(65,91.8){\line(3,2){10}}
    \put(80,91.8){\line(3,2){10}}
    \put(65,91.8){\line(0,1){15}}
    \put(75,98.6){\line(0,1){15}}
    % fourth step
    \put(80,61.8){\line(1,0){15}}
    \put(90,68.6){\line(1,0){15}}
    \put(80,61.8){\line(3,2){10}}
    \put(95,61.8){\line(3,2){10}}
    \put(80,61.8){\line(0,1){30}}
    \put(90,68.6){\line(0,1){30}}
    % fifth step
    \put(95,31.8){\line(1,0){15}}
    \put(105,38.6){\line(1,0){15}}
    \put(95,31.8){\line(3,2){10}}
    \put(110,31.8){\line(3,2){10}}
    \put(95,31.8){\line(0,1){30}}
    \put(105,38.6){\line(0,1){30}}
    \put(110,16.8){\line(3,2){10}}
    \put(110,16.8){\line(0,1){15}}
    \put(120,23.6){\line(0,1){15}}
    % right picture
    % front extra
    \put(175,16.8){\line(1,0){75}}
    \put(175,16.8){\line(0,1){60}}
    \put(160,76.8){\line(0,1){60}}
    \put(150,10){\line(0,1){60}}
    \put(165,10){\line(0,1){60}}
    \put(150,10){\line(1,0){15}}
    \put(150,70){\line(1,0){15}}
    \put(160,76.8){\line(1,0){15}}
    \put(165,10){\line(3,2){10}}
    \put(150,70){\line(3,2){10}}
    \put(165,70){\line(3,2){10}}
    % highest step
    \put(160,136.8){\line(1,0){30}}
    \put(170,143.6){\line(1,0){30}}
    \put(160,136.8){\line(3,2){10}}
    \put(190,136.8){\line(3,2){10}}
    % second step
    \put(190,106.8){\line(1,0){15}}
    \put(200,113.6){\line(1,0){15}}
    \put(190,106.8){\line(3,2){10}}
    \put(205,106.8){\line(3,2){10}}
    \put(190,106.8){\line(0,1){30}}
    \put(200,113.6){\line(0,1){30}}
    % third step
    \put(205,91.8){\line(1,0){15}}
    \put(215,98.6){\line(1,0){15}}
    \put(205,91.8){\line(3,2){10}}
    \put(220,91.8){\line(3,2){10}}
    \put(205,91.8){\line(0,1){15}}
    \put(215,98.6){\line(0,1){15}}
    % fourth step
    \put(220,61.8){\line(1,0){15}}
    \put(230,68.6){\line(1,0){15}}
    \put(220,61.8){\line(3,2){10}}
    \put(235,61.8){\line(3,2){10}}
    \put(220,61.8){\line(0,1){30}}
    \put(230,68.6){\line(0,1){30}}
    % fifth step
    \put(235,31.8){\line(1,0){15}}
    \put(245,38.6){\line(1,0){15}}
    \put(235,31.8){\line(3,2){10}}
    \put(250,31.8){\line(3,2){10}}
    \put(235,31.8){\line(0,1){30}}
    \put(245,38.6){\line(0,1){30}}
    \put(250,16.8){\line(3,2){10}}
    \put(250,16.8){\line(0,1){15}}
    \put(260,23.6){\line(0,1){15}}
    % heights
    \put(290,23.6){\line(0,1){120}}
    \put(286,23.6){\line(1,0){8}}
    \put(286,143.6){\line(1,0){4}}
    \put(290,83.6){\line(1,0){4}}
    \put(278,100){\small $h$}
    \put(294,56){\small $h_0$}
  \end{picture}
\caption{From $\Delta_0$ to $\Delta_1$}
\phantomsection\label{figure:pilar}
\end{figure}
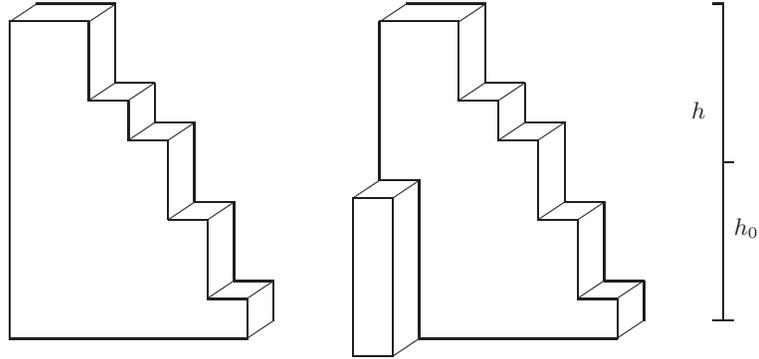
\end{center}

\begin{center}
\begin{figure}[ht]
  \begin{picture}(370,150)
    %%%%%%%%%% left picture
    % left picture
    % highest step
    \put(0,16.8){\line(1,0){90}}
    \put(0,16.8){\line(0,1){120}}
    \put(0,136.8){\line(1,0){30}}
    \put(10,143.6){\line(1,0){30}}
    \put(0,136.8){\line(3,2){10}}
    \put(30,136.8){\line(3,2){10}}
    % second step
    \put(30,106.8){\line(1,0){15}}
    \put(40,113.6){\line(1,0){15}}
    \put(30,106.8){\line(3,2){10}}
    \put(45,106.8){\line(3,2){10}}
    \put(30,106.8){\line(0,1){30}}
    \put(40,113.6){\line(0,1){30}}
    % third step
    \put(45,91.8){\line(1,0){15}}
    \put(55,98.6){\line(1,0){15}}
    \put(45,91.8){\line(3,2){10}}
    \put(60,91.8){\line(3,2){10}}
    \put(45,91.8){\line(0,1){15}}
    \put(55,98.6){\line(0,1){15}}
    % fourth step
    \put(60,61.8){\line(1,0){15}}
    \put(70,68.6){\line(1,0){15}}
    \put(60,61.8){\line(3,2){10}}
    \put(75,61.8){\line(3,2){10}}
    \put(60,61.8){\line(0,1){30}}
    \put(70,68.6){\line(0,1){30}}
    % fifth step
    \put(75,31.8){\line(1,0){15}}
    \put(85,38.6){\line(1,0){15}}
    \put(75,31.8){\line(3,2){10}}
    \put(90,31.8){\line(3,2){10}}
    \put(75,31.8){\line(0,1){30}}
    \put(85,38.6){\line(0,1){30}}
    \put(90,16.8){\line(3,2){10}}
    \put(90,16.8){\line(0,1){15}}
    \put(100,23.6){\line(0,1){15}}
    % middle picture
    % front extra
    \put(165,16.8){\line(1,0){45}}
    \put(165,16.8){\line(0,1){75}}
    \put(120,106.8){\line(0,1){30}}
    \put(110,10){\line(0,1){90}}
    \put(155,10){\line(0,1){90}}
    \put(110,10){\line(1,0){45}}
    \put(110,100){\line(1,0){45}}
    \put(120,106.8){\line(1,0){30}}
    \put(155,10){\line(3,2){10}}
    \put(110,100){\line(3,2){10}}
    \put(155,100){\line(3,2){20}}
    % highest step
    \put(120,136.8){\line(1,0){30}}
    \put(130,143.6){\line(1,0){30}}
    \put(120,136.8){\line(3,2){10}}
    \put(150,136.8){\line(3,2){10}}
    % second step
    \put(160,113.6){\line(1,0){15}}
    \put(150,106.8){\line(3,2){10}}
    \put(150,106.8){\line(0,1){30}}
    \put(160,113.6){\line(0,1){30}}
    % third step
    \put(165,91.8){\line(1,0){15}}
    \put(175,98.6){\line(1,0){15}}
    \put(165,91.8){\line(3,2){10}}
    \put(180,91.8){\line(3,2){10}}
    \put(175,98.6){\line(0,1){15}}
    % fourth step
    \put(180,61.8){\line(1,0){15}}
    \put(190,68.6){\line(1,0){15}}
    \put(180,61.8){\line(3,2){10}}
    \put(195,61.8){\line(3,2){10}}
    \put(180,61.8){\line(0,1){30}}
    \put(190,68.6){\line(0,1){30}}
    % fifth step
    \put(195,31.8){\line(1,0){15}}
    \put(205,38.6){\line(1,0){15}}
    \put(195,31.8){\line(3,2){10}}
    \put(210,31.8){\line(3,2){10}}
    \put(195,31.8){\line(0,1){30}}
    \put(205,38.6){\line(0,1){30}}
    \put(210,16.8){\line(3,2){10}}
    \put(210,16.8){\line(0,1){15}}
    \put(220,23.6){\line(0,1){15}}
    % right picture
    % front extra
    \put(315,16.8){\line(1,0){15}}
    \put(285,61.8){\line(0,1){30}}
    \put(240,106.8){\line(0,1){30}}
    \put(315,16.8){\line(0,1){15}}
    \put(230,10){\line(0,1){90}}
    \put(275,55){\line(0,1){45}}
    \put(305,10){\line(0,1){45}}
    \put(230,10){\line(1,0){75}}
    \put(230,100){\line(1,0){45}}
    \put(275,55){\line(1,0){30}}
    \put(240,106.8){\line(1,0){30}}
    \put(285,61.8){\line(1,0){15}}
    \put(275,55){\line(3,2){10}}
    \put(305,55){\line(3,2){20}}
    \put(305,10){\line(3,2){10}}
    \put(230,100){\line(3,2){10}}
    \put(275,100){\line(3,2){20}}
    % highest step
    \put(240,136.8){\line(1,0){30}}
    \put(250,143.6){\line(1,0){30}}
    \put(240,136.8){\line(3,2){10}}
    \put(270,136.8){\line(3,2){10}}
    % second step
    \put(280,113.6){\line(1,0){15}}
    \put(270,106.8){\line(3,2){10}}
    \put(270,106.8){\line(0,1){30}}
    \put(280,113.6){\line(0,1){30}}
    % third step
    \put(285,91.8){\line(1,0){15}}
    \put(295,98.6){\line(1,0){15}}
    \put(285,91.8){\line(3,2){10}}
    \put(300,91.8){\line(3,2){10}}
    \put(295,98.6){\line(0,1){15}}
    % fourth step
    \put(310,68.6){\line(1,0){15}}
    \put(300,61.8){\line(3,2){10}}
    \put(300,61.8){\line(0,1){30}}
    \put(310,68.6){\line(0,1){30}}
    % fifth step
    \put(315,31.8){\line(1,0){15}}
    \put(325,38.6){\line(1,0){15}}
    \put(315,31.8){\line(3,2){10}}
    \put(330,31.8){\line(3,2){10}}
    \put(325,38.6){\line(0,1){30}}
    \put(330,16.8){\line(3,2){10}}
    \put(330,16.8){\line(0,1){15}}
    \put(340,23.6){\line(0,1){15}}
    % heights
    \put(360,23.6){\line(0,1){90}}
    \put(356,23.6){\line(1,0){8}}
    \put(356,113.6){\line(1,0){4}}
    \put(360,68.6){\line(1,0){4}}
    \put(345,80){\small $h_1$}
    \put(364,46){\small $h_0$}
  \end{picture}
\caption{From $\Delta_1$ to $\Delta_{1\frac{1}{2}}$ and $\Delta_2$}
\phantomsection\label{figure:mushroom}
\end{figure}
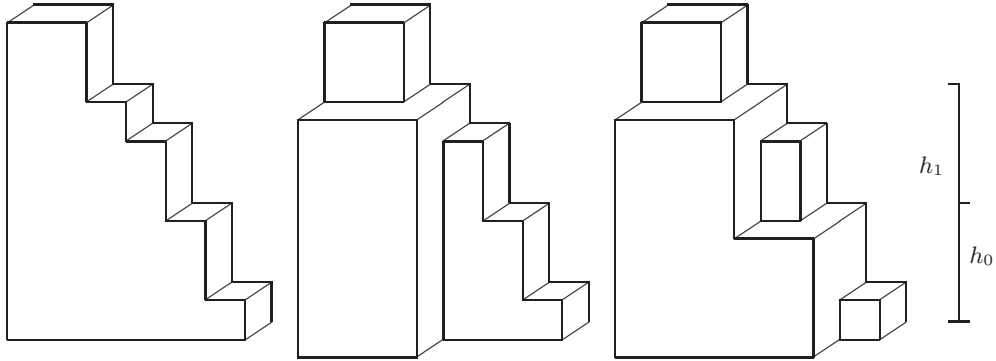
\end{center}

Readers might wonder how the polynomial dependence 
from Theorem \ref{thm:generatingComplexity} is preserved in Proposition \ref{pro:graphToStdSet}. 
Indeed, in the inductive construction of the standard set $\Delta$ from the proof of the proposition, 
the dimension of $\Delta$ and the number of elements in it grow rapidly. 
However, we don't specify $\Delta$ as list of its elements, 
but rather as a list of the minimal generators of the $\N^d$-module $\N^d \setminus \Delta$.
This set is also known as the set of \emph{outer corners} of $\Delta$.
Doing so, we avoid large data sets when handling large standard sets. 
We will use this representation of $\Delta$ in the proof of Theorem \ref{thm:graphsVsC4} below. 

%%%%%%%%%%%%%%%%%

\section{Reduction to standard graphs with unique maximal nodes}

The following proposition provides the fourth and last step in
proving that C4 decomposition and standard decomposition of labeled
graphs are equivalent. 
Here is a small example illustrating its assertion. 

\begin{ex}
\phantomsection\label{ex:reductionToUniqueMaximalNode}
  Let $G$ be the graph with nodes $x$ and $y$, both of label 1, and no edges. 
  Let $G\p$ be the graph with nodes $x$ and $y$ of label 1 and $z$ of label 2, 
  with edges from $x$ and from $y$ to $z$. 
  Figure \ref{figure:reductionToUniqueMaximalNode}
  shows that there is a bijection between the standard decompositions of $G$ 
  and the standard decompositions of $G\p$. 
\end{ex}

\begin{center}
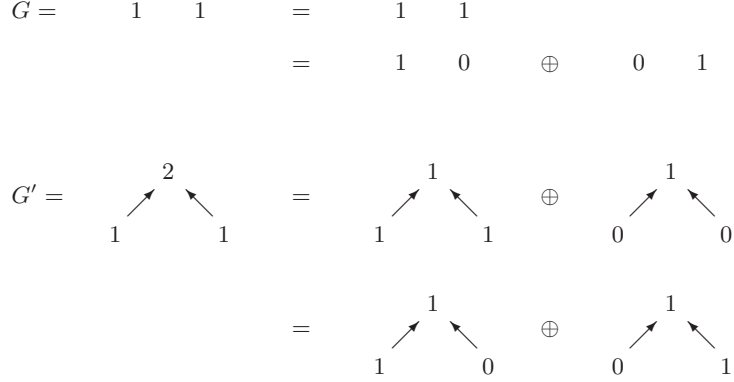
\begin{figure}[ht]
  \begin{picture}(290,155)
    % graph G
    \put(3,144){\small $G =$}
    \put(48,144){\small $1$}
    \put(72,144){\small $1$}
    % first decomposition
    \put(109,144){\small $=$}
    \put(148,144){\small $1$}
    \put(172,144){\small $1$}
    % second decomposition
    \put(109,124){\small $=$}
    \put(148,124){\small $1$}
    \put(172,124){\small $0$}
    \put(203,124){\small $\oplus$}    
    \put(238,124){\small $0$}
    \put(262,124){\small $1$}
    % graph G\p
    \put(3,74){\small $G\p =$}
    \put(60,83){\small $2$}
    \put(47,69){\vector(1,1){10}}
    \put(79,69){\vector(-1,1){10}}
    \put(40,59){\small $1$}
    \put(81,59){\small $1$}
    % first decomposition
    \put(109,74){\small $=$}
    \put(160,83){\small $1$}
    \put(147,69){\vector(1,1){10}}
    \put(179,69){\vector(-1,1){10}}
    \put(140,59){\small $1$}
    \put(181,59){\small $1$}
    \put(203,74){\small $\oplus$}    
    \put(250,83){\small $1$}
    \put(237,69){\vector(1,1){10}}
    \put(269,69){\vector(-1,1){10}}
    \put(230,59){\small $0$}
    \put(271,59){\small $0$}
    % second decomposition
    \put(109,24){\small $=$}
    \put(160,33){\small $1$}
    \put(147,19){\vector(1,1){10}}
    \put(179,19){\vector(-1,1){10}}
    \put(140,9){\small $1$}
    \put(181,9){\small $0$}
    \put(203,24){\small $\oplus$}    
    \put(250,33){\small $1$}
    \put(237,19){\vector(1,1){10}}
    \put(269,19){\vector(-1,1){10}}
    \put(230,9){\small $0$}
    \put(271,9){\small $1$}
  \end{picture}
\caption{The decompositions of the graphs from Example \ref{ex:reductionToUniqueMaximalNode}}
\phantomsection\label{figure:reductionToUniqueMaximalNode}
\end{figure}
\end{center}

\begin{pro}
\phantomsection\label{pro:reductionToUniqueMaximalNode}
Let $G$ be a labeled graph. Then there exists a graph $G\p$ such that
\begin{enumerate}
\item $G$ is a subgraph of $G\p$,
\item $G\p$ has a unique node of maximal label that is reachable from all nodes of $G\p$ and,
\item the standard decompositions of $G$ and $G\p$ are in canonical
  bijection.
\end{enumerate}
\end{pro}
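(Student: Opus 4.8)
The plan is to build $G\p$ from $G$ by adjoining a single new node $z$ carrying a large label, together with an edge from every node of $G$ to $z$, and to realize the bijection by \emph{lifting} each component of a decomposition of $G$ to a component of $G\p$ that additionally carries the node $z$, padding with copies of the one-node component supported at $z$ so that the total number of components matches.

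First I would dispose of the degenerate case. If $G$ is not standard, then neither is any graph containing $G$ as a subgraph, so $\decom G=\decom{G\p}=\emptyset$ and the required bijection is trivial; hence I may assume $G$ standard. I then set $N\defeq 1+\sum_{v\in\nodes G}\lab G v$ and define $G\p$ by $\nodes{G\p}\defeq\nodes G\cup\set z$ and $\edges{G\p}\defeq\edges G\cup\setBuilder{(v,z)}{v\in\nodes G}$, with $\lab{G\p} z\defeq N$ and $\lab{G\p} v\defeq\lab G v$ for $v\in\nodes G$. Since $N$ exceeds every label occurring in $G$, the graph $G\p$ is standard, the node $z$ is its unique node of maximal label, and $z$ is reachable in one step from every node; this gives properties (1) and (2).

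The heart of the matter is property (3). The key structural observation is that, because every node of $G$ has an edge into $z$, standardness forces every nonzero standard component $H$ of $G\p$ to have $\lab H z=1$: any node with $H$-label $1$ has an edge to $z$, whence $\lab H z\geq 1$. Consequently, since the $z$-labels of the components of any standard decomposition of $G\p$ are all $1$ and must sum to $\lab{G\p} z=N$, every standard decomposition of $G\p$ consists of exactly $N$ components. I would then define two maps. The forward map sends $\hsym\in\decom G$ to the multiset obtained by lifting each $H\in\hsym$ to the component $\widehat H$ of $G\p$ agreeing with $H$ on $\nodes G$ and satisfying $\lab{\widehat H} z=1$, and then adjoining $N-\card{\hsym}$ copies of the filler component $W$ defined by $\lab W z=1$ and $\lab W v=0$ for all $v\in\nodes G$. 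The backward map restricts each component of a decomposition of $G\p$ to $\nodes G$ and discards those that become identically zero, these being precisely the copies of $W$.

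The main obstacle, and the reason for the specific value of $N$, is verifying that these maps are well defined; once that is done, checking that they are mutually inverse is routine bookkeeping, since the backward map visibly undoes the lifting and forgets exactly the padded copies of $W$, while the forward map re-pads to the forced total of $N$ components. Well-definedness of the lift $\widehat H$, and of the restriction in the backward direction, reduces to the observation that $\gsub{G\p}{\widehat H}$ is standard if and only if $\gsub G H$ is: the only new constraints come from the edges $(v,z)$, and these hold because the label of $z$ in $\gsub{G\p}{\widehat H}$ stays at least as large as every label of $G$. The genuinely delicate point is the filler $W$: the graph $\gsub{G\p} W$ assigns $z$ the label $N-1$, so I must check $N-1\geq\lab G v$ for every node $v$, which holds precisely because $N-1=\sum_{v}\lab G v\geq\max_v\lab G v$. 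Finally, the padding count is nonnegative because each of the $\card{\hsym}$ components is nonzero, so $\card{\hsym}\leq\sum_v\lab G v=N-1<N$; this is exactly why I inflate the sum of labels by one when defining $N$.
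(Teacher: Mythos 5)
Your proof is correct, and its skeleton is the same as the paper's: adjoin a single dominating node with an edge into it from every node of $G$, observe that every nonzero standard component of the enlarged graph must give that node the label $1$ (so every decomposition of $G\p$ has exactly as many elements as that node's label), lift components of $G$ by labeling the new node $1$, and pad with copies of the one-node filler component. The genuine difference is the label you place on the new node, and it matters more than you might think: the paper labels its new node $l+1$, where $l$ is the maximal label of $G$, while you use $N=1+\sum_{v\in\nodes G}\lab G v$. Your choice is the right one, and the paper's is in fact too small. A decomposition of $G$ can have up to $\sum_{v}\lab G v$ elements, which can exceed $l+1$; for example, if $G$ consists of three isolated nodes labeled $1$, then $\decom G$ consists of the $5$ set partitions of the node set, whereas with the paper's choice ($l+1=2$) every decomposition of its $G\p$ has exactly two elements, giving only $4$ decompositions. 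The decomposition of $G$ into three singletons cannot be lifted there, since subtracting its three lifts leaves the new node with label $2-3=-1$; so for the paper's $G\p$ no bijection exists at all, and the map the paper calls $g$ is not well defined. Your inequality $\card\hsym\leq\sum_v\lab G v=N-1<N$, which makes the padding count nonnegative, is exactly the point that the paper's construction cannot guarantee; your proof repairs this.

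One small loose end: when $G$ fails to be standard because some labels are negative, your $N$ need not exceed the maximal label of $G$ (consider nodes labeled $5$ and $-10$), so your specific $G\p$ could violate condition (2). Since in that degenerate case every graph containing $G$ as a subgraph has empty decomposition set, condition (3) is vacuous, and you may simply take the new label to be any integer exceeding both $N$ and all labels of $G$; nothing else in your argument changes.
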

\begin{proof}
Let $l$ be the maximal label among all nodes of $G$. Let $G\p$ be
equal to $G$, except that $G\p$ has an extra node $v$ with label $l+1$,
and $v$ has an edge to it from all other nodes. The first two
conditions are immediate, so it remains to show that the standard
decompositions of $G$ and $G\p$ are in canonical bijection.

It is not hard to see that the function 
\begin{equation*}
  \begin{split}
    f : 
    \left\{ \text{standard cmps of } G\p \right\} \setminus \set{\set{v}}
    & \to 
    \left\{ \text{standard cmps of } G \right\} \\
    H & \mapsto H\setminus\set{v} \text{ without edges in } H \text{ with target } v
  \end{split}
\end{equation*}
is a bijection. We extend $f$ to
a map of multisets of standard components by applying it to each standard
component individually, so for example $f(\set{A,B}) \defeq \set{f(A),f(B)}$.

Let $D\p$ be a standard decomposition of $G\p$. Write $D\p$ as a union
of $D$ and $V$ where $V$ is a multiset that contains only
copies of $\set{v}$ while $D$ does not contain $\set v$ at all. 
Then obviously $f(D)$ is a decomposition of $G$.

For the other direction, let $D$ be a standard decomposition of $G$
and let $D\p\defeq f^{-1}(D)$. Then $G\p-\sum D\p$ is a graph in which
all nodes but $v$ have label zero, node $v$ having a label $l>0$. Let
$V$ be the multiset that contains $l$ copies of $\set v$. Then
$D\p\cup V$ is a decomposition of $G\p$. Let $g$ be the function
$D\mapsto D\p\cup V$. It is not hard to see that $f$ and $g$ are
mutual inverses.
\end{proof}

We can now prove that C4 decomposition and standard decomposition of
labeled graphs are equivalent.

\begin{proof}[Proof of Theorem \ref{thm:graphsVsC4}]
  (i) A solution of problem (a) implies a solution of problem (b) by Proposition \ref{pro:C4vsGraph}. 
  Assume we are able to solve problem (b), and are given a labeled graph $G$. 
  We pass to the canonicalization $G\p$, 
  which has the same standard decompositions as $G$ by Proposition \ref{pro:canonicalizatedGraph}. 
  If $G\p$ has multiple nodes of locally maximal label $l$, 
  we pass to the graph $G\pp$ with only one node of maximal label $l+1$
  from Proposition \ref{pro:reductionToUniqueMaximalNode}. 
  $G\pp$ still has the same standard decompositions as $G$. 
  Then we replace $G\pp$ by its transitive closure $G\ppp$. 
  By Lemma \ref{lmm:transitiveClosure}, this transition does not harm the decompositions either. 
  Finally, Proposition \ref{pro:graphToStdSet} provides a standard set $\Delta\ppp$
  whose canonicalized standard graph is $G\ppp$. 
  Problem (a) is solved. 
  
  (ii) This assertion depends on the representations of $G$ and $\Delta$. 
  In the proof of Theorem \ref{thm:algorithm}, we explained that we specify a graph as a list of nodes with labels and a list of edges. 
  After the proof of Proposition \ref{pro:graphToStdSet}, 
  we explained that we specify a standard set by its outer corners. 
  
  Let us first show that for any graph $G$ with $n$ nodes and $e$ edges, 
  a staircase $\Delta$ whose graph equals $G$ can be computed in polynomial time. 
  We may assume that $G$ is canonical and transitive, and has only one node of maximal label, since the operations 
  \begin{itemize}
    \item passing to the canonicalization, 
    \item passing to a graph with only one node of maximal label, and
    \item passing to the transitive closure
  \end{itemize}
  are obviously polynomial in the datum of $G$. 
  It therefore remains to show that the construction from the proof of Proposition \ref{pro:graphToStdSet} is polynomial. 
  That construction builds $\Delta$ using two nested inductions over $n$ and $e$. 
  The respective base cases being trivial, it suffices to show that both induction steps are polynomial in the datum of $G$. 
  Let us stick to the notation from the proof of Proposition \ref{pro:graphToStdSet}. 
  In addition to that notation, we define $\C_i \subseteq \N^d$ as the set of corners of $\Delta_i$ for $i = 0,1,2$. 
  In both the inner and the outer induction, the dimension of the standard sets involved rises by one. 
  Thus the dimension $d$ is polynomial in the datum of $G$. 
  The outer induction step is the passage from $\Delta_0$ to $\Delta_1$, as defined in \eqref{eqn:Delta0ToDelta1}. 
  That definition shows that $e_1 \in \C_0$ and  
  \[
    \C_1 = \left(\C_0 \setminus \set{e_1} \right) \cup \setBuilderBarRight{e_1 + e_i}{i = 1, \ldots, d-1} \cup \set{h_0 e_d} ,
  \]
  cf. Figure \ref{figure:pilar}. The inner induction step is thus polynomial. 
  
  The inner induction step is the passage from $\Delta_1$ via $\Delta_{1 \frac{1}{2}}$ to $\Delta_2$. 
  Remember that for $i = 0,1$, the node $v_i \in G_1$ corresponds to a connected component $C_i$ of $(\Delta_1)^{h_i}$. 
  Let $\C\p$ be the union of the following three sets: 
  \begin{itemize}
  \item all corners $\alpha \in \C_1$ such that $\alpha - e_j \in (q^d)^{-1}(C_1) \cap \Delta_1$ for some $e_j \neq e_1$, 
  \item the projections to the hyperplane $\{x_d = 0\}$ of all corners $\alpha \in \C_1$ such that $\alpha - e_j \in (q^d)^{-1}(C_1) \cap \Delta_1$ for some $e_j \neq e_1,e_d$, and 
  \item the elements $2e_1$ and $h_1 e_d$. 
  \end{itemize}
  Then $\C\p$ is the set of corners of $M_{1\frac{1}{2}}$ from \eqref{eqn:Delta1ToDelta3/2}. 
  Remember that $\Delta_{1\frac{1}{2}}$ is the union of $\Delta_1$ and $M_{1\frac{1}{2}}$. 
  The set $\C_{1\frac{1}{2}}$ of corners of $\Delta_{1\frac{1}{2}}$ is therefore obtained by
  \begin{itemize}
    \item collecting the exponents of least common multiples of $x^\alpha$ $x^\beta$, 
    for all $\alpha \in \C_1$ and all $\beta \in \C\p$, 
    \item and subsequently cleaning that set up, that is, 
    detecting pairs $\alpha,\beta$ such that $\alpha \in \beta + \N^d$ and deleting each such $\alpha$. 
  \end{itemize}
  This establishes the passage from $\Delta_1$ to $\Delta_{1 \frac{1}{2}}$. 
  As for the passage from $\Delta_{1 \frac{1}{2}}$ to $\Delta_2$, we construct a set of corners $\C\pp$ in an analogous way as 
  we constructed $\C\p$ in the three bulleted items above, but using $C_0$ rather than $C_1$ and $h_0$ rather than $h_1$. 
  Then $\Delta_2$ is the union of $\Delta_{1\frac{1}{2}}$ and the standard set with corners $\C\pp$. 
  The set $\C_2$ is therefore obtained from sets $\C_{1\frac{1}{2}}$ and $\C\pp$ 
  by the method of taking least common multiples and cleaning up which we employed above. 
  All operations are polynomial. 

  Let us now show that for each standard set $\Delta$, its canonicalized graph $G\p(\Delta)$ can be computed 
  in polynomial time. 
  In other words, we have to compute the connected components of the iysohypses in polynomial time. 
  We assume $\Delta$ to be given by its corner set $\C$. 
  For each $\alpha \in \C$, we define $\Delta_\alpha \defeq q^d(\alpha) + \oplus_{i=1}^{d-1}\N e_i$. 
  For each height $a$, we define $\Delta_a$ as the union of all $\Delta_\alpha$, for all $\alpha$ with $\card{\alpha} \leq a$. 
  Then the $a$-th isohypse is 
  \[
    \Delta^a = \Delta_a\setminus \Delta_{a-1} = \cup_{\card{\alpha} = a} (\Delta_\alpha \setminus T_{a-1}) . 
  \]
  Obviously each $E_\alpha \defeq \Delta_{\alpha} \setminus T_{a-1}$ is connected. 
  Moreover, it is easy to see that $E_\alpha \cup E_\beta$ is connected 
  if, and only if, the least common multiple of the monomials $x^{q^d(\alpha)}$ and $x^{q^d(\beta)}$ has its exponent outside of $T_{a-1}$. 
  Upon applying this observation to all $\alpha,\beta$ of total degree $a$, we compute the connected components of the $a$-th isohypse in polynomial time. 
\end{proof}

%%%%%%%%%%%%%%%%%

\appendix

\section{A generating function}
\phantomsection\label{app:generatingFunction}

We will now present a natural generating function
for the number of standard decompositions of a standard graph $G$. 
The analogue of this generating function in the setting of standard sets is discussed in \cite[Section 2.3]{components}. 

It is good to temporarily forget about labelings. So let $F$ be an unlabeled directed graph. 
Let $\mathcal{E}$ be the set of all standard 0-1 subgraphs of $F$\footnote{We
defined standard 0-1 subgraphs only for labeled graphs; if $F$ is unlabeled, we give each node the trivial label 1; 
then the notion of 0-1 subgraphs is well-defined.}
with node set $\nodes F$. 
We identify each $E \in \mathcal{E}$ with the \emph{characteristic function} of the labeling, that is, 
with the vector $\chi_E \defeq (\chi_{E,v})_{v \in \nodes F}$ indexed by nodes of $F$, with entries
\[
  \chi_{E,v} \defeq 
  \begin{cases}
    1 & \text{if } v \in \nodes E \\
    0 & \text{else}. 
  \end{cases}
\]
We define $\chi \defeq (\chi_{E,v})_{E \in \mathcal{E}, v \in \nodes F}$ 
to be the matrix whose rows are indexed by $\mathcal{E}$, 
the row with index $E$ being the vector $\chi_E$. 
Moreover, we introduce a vector $t \defeq (t_v)_{v \in \nodes F}$ of indeterminates, also indexed by nodes of $F$. 
If $w \defeq (w_v)_{v \in \nodes F}$ is any vector of nonnegative integers, indexed by nodes of $F$, 
we write $t^w \defeq \prod_{v \in \nodes F}t_v^{w_v}$. 
Consider the power series
\[
  g \defeq \prod_{E \in \mathcal{E}}\frac{1}{1 - t^{\chi_E}}. 
\]
We define integers $\Phi_\chi(w)$, one for each integer-valued vector $w$ as above, 
by expanding the power series $g$, 
\[
  g \eqdef \sum_{v \in \N^{\nodes F}}\Phi_\chi(w)t^w .
\]
$\Phi_\chi$ is called a \emph{vector partition function}, see \cite{sturmfelsVector}. 
Note that labelings of graphs $G$ with the same nodes and edges as $F$ correspond to vectors $w$ as above via 
\[
  w = (\lab G v)_{v \in \nodes F}.
\]
We denote by $G_w$ the labeled graph $G$ with the same nodes and edges as $F$
and labeling given by $w$. 

\begin{pro}
\begin{enumerate}
  \item Given any vector $w \in \N^{\nodes F}$, 
  the coefficient $\Phi_\chi(w)$ vanishes unless the labeled graph $G_w$ is standard. 
  \item If the labeled graph $G_w$ is standard, 
  the coefficient $\Phi_\chi(w)$ equals the number of standard decompositions of $F$. 
\end{enumerate}
\end{pro}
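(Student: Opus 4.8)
The plan is to expand the product defining $g$ into a sum of monomials, read off $\Phi_\chi(w)$ as a combinatorial count, and then match that count against the definition of a standard decomposition. Since every $E \in \mathcal{E}$ has $\chi_E \neq 0$ (the all-zero labeling is excluded, in agreement with condition (3) in the definition of a standard component, and because its factor $\tfrac{1}{1-t^{0}}$ would be undefined), each factor expands as a geometric series $\tfrac{1}{1-t^{\chi_E}} = \sum_{n_E \geq 0} t^{n_E \chi_E}$, so
\[
  g = \sum_{(n_E)_{E \in \mathcal{E}}} t^{\sum_{E} n_E \chi_E}.
\]
Comparing with the definition of $\Phi_\chi$, this shows that $\Phi_\chi(w)$ is the number of families $(n_E)_{E\in\mathcal E}$ of nonnegative integers with $\sum_E n_E \chi_E = w$; equivalently, the number of multisets $\hsym$ of elements of $\mathcal{E}$ whose characteristic vectors sum to $w$. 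Note that $\sum_E n_E \chi_E = w$ is exactly the equation $G_w = \sum_{H \in \hsym} H$ read off on labelings.

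The observation I would isolate as the engine of the proof is that nonnegative integer combinations of the vectors $\chi_E$ are automatically edge-compatible. Each $E \in \mathcal{E}$ is a standard $0$-$1$ graph on the node and edge set of $F$, so $\chi_{E,a} \leq \chi_{E,b}$ for every edge $(a,b) \in \edges F$; summing with nonnegative coefficients preserves both this inequality and nonnegativity. Hence any $w$ of the form $\sum_E n_E \chi_E$ yields a standard graph $G_w$. This immediately gives part (1): if $G_w$ is not standard then $w$ admits no such representation, so $\Phi_\chi(w) = 0$.

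For part (2) (where I read ``$F$'' as $G_w$, the labeled graph attached to $w$) I would identify the multisets counted by $\Phi_\chi(w)$ with the elements of $\decom{G_w}$. Each standard component of $G_w$ is a nonzero standard $0$-$1$ graph, hence lies in $\mathcal{E}$, and the requirement $\sum \hsym = G_w$ is the vector equation $\sum_H n_H \chi_H = w$; thus every standard decomposition is one of the multisets counted by $\Phi_\chi(w)$. Conversely, given a multiset with $\sum_E n_E \chi_E = w$, I must check that each $E$ with $n_E > 0$ is a standard component of $G_w$: peeling off one copy leaves $w - \chi_E = \sum_{E'} n'_{E'}\chi_{E'}$, again a nonnegative combination of the $\chi_{E'}$, so $G_w \ominus E$ is standard by the key observation; together with $E \neq 0$ this makes $E$ a standard component, whence the multiset is a standard decomposition. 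The two families of multisets coincide, so $\Phi_\chi(w) = \card{\decom{G_w}}$.

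The argument is short, and the only point that genuinely needs care — the main obstacle — is the closure statement in the converse: that each individual $E$ automatically qualifies as a standard component of $G_w$ merely because the representation $\sum_E n_E \chi_E = w$ exists, rather than this having to be imposed as a separate hypothesis. This is precisely what the preservation of edge-compatibility under forming nonnegative combinations, and under removing a single summand, delivers.
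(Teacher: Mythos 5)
Your proof is correct and takes essentially the same route as the paper's: expand each factor as a geometric series, identify monomials with multisets of elements of $\mathcal{E}$, and use the fact that sums of standard $0$-$1$ graphs on the nodes and edges of $F$ are standard. You are in fact somewhat more careful than the paper, which leaves implicit both the exclusion of the all-zero labeling from $\mathcal{E}$ and the verification (your ``peeling off'' step) that every multiset of elements of $\mathcal{E}$ summing to $G_w$ automatically consists of standard components of $G_w$.
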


\begin{proof}
  We expand each term $\frac{1}{1 - t^{\chi_E}}$ in the product expression of $g$ as a geometric series,
  \[
    g = \prod_{E \in \mathcal{E}}(1 + t^{\chi_E} + t^{2\cdot\chi_E} + t^{3\cdot\chi_E}  + t^{4\cdot\chi_E}+ \ldots ).
  \]
  Upon expanding the product, we see that each monomial appearing in the series takes the shape 
  $m= \prod_{E \in \mathcal{F}}t^{n_E\cdot\chi_E}$ for some finite $\mathcal{F} \subseteq \mathcal{E}$
  and some $n_E \in \N$. 
  We replace the set $\mathcal{F}$ by the multiset $\hsym$ in which each $E \in \mathcal{F}$ 
  appears $n_E$ times. Since each member of $\hsym$ is a standard 0-1 subgraph of $F$, 
  the graph $G \defeq \sum\hsym$ standard graph and has the same nodes and edges as $F$. 
  The above monomial $m$ equals $\prod_{v \in \nodes F}t^{\lab G v}$. 
  This establishes (1). 
  
  As for (2), let $G$ be a standard graph with the same nodes and edges as $\nodes F$. 
  The above discussion shows that the coefficient of the monomial 
  $m \defeq \prod_{v \in \nodes F}t^{\lab G v}$ shows up in the expansion of $g$, 
  and its coefficient counts the number of ways of writing $G$ as a sum $G = \sum\hsym$
  of elements of $\mathcal{E}$. 
  This is just the number of standard decompositions of $G$. 
\end{proof}

%%%%%%%%%%%%%%%%%

\section{Partitions of partitions}
\phantomsection\label{app:partitionsOfPartitions}

Appendix \ref{app:generatingFunction} suggests a connection between standard decompositions and partitions. 
Let us further investigate this. 

\begin{ex}
  \begin{itemize}
    \item The set of partitions of an integer $n$ is in natural bijection with the set of standard sets of cardinality $n$ 
    by identifying a partition and its Young diagram (in the French notation). 
    \item If $p = \set{n_1, \ldots, n_h}$ (a multiset) is a partition of $n$ and for each $i$, $p_i$ is a partition of $n_i$, 
    we call $\set{p_1, \ldots, p_h}$ a \emph{partition of partition of $n$}. 
    The set of partitions of partitions of $n$ is in natural bijection with the set of standard sets $\Delta \subseteq \N^3$ 
    of cardinality $n$, \emph{together with all their C4 decompositions}. 
  \end{itemize}
\end{ex}

Both bijections are visualized in Figure \ref{figure:partitionsOfPartitions}. 
For generalizing the statements, we introduce the notion of \emph{C4 games}. 

\begin{center}
\begin{figure}[ht]
  \begin{picture}(330,140)
    % partition
    \put(0,50){\small $\set{5,3,2,2} = $}    
    % young diagram
    \put(60,33.4){\line(1,0){75}}
    \put(60,48.4){\line(1,0){75}}
    \put(60,63.4){\line(1,0){45}}
    \put(60,78.4){\line(1,0){30}}
    \put(60,93.4){\line(1,0){30}}
    \put(60,33.4){\line(0,1){60}}
    \put(75,33.4){\line(0,1){60}}
    \put(90,33.4){\line(0,1){60}}
    \put(105,33.4){\line(0,1){30}}
    \multiput(120,33.4)(15,0){2}{\line(0,1){15}}
    % partition of partition
    \put(150,51){\small 
    $\left\{ \begin{array}{c}
    \set{4,3} \\ \set{2,1} \\ \set{5}
    \end{array}
    \right\}
     = $}    
    % honest c4 game
    % lowest block
    \put(275,20){\line(3,2){10}}
    \put(260,35){\line(3,2){20}}
    \put(275,35){\line(3,2){20}}
    \put(245,35){\line(3,2){20}}
    \put(230,35){\line(3,2){20}}
    \put(275,20){\line(0,1){15}}
    \put(260,20){\line(0,1){15}}
    \put(245,20){\line(0,1){15}}
    \put(230,20){\line(0,1){15}}
    \put(230,20){\line(1,0){45}}
    \put(230,35){\line(1,0){45}}
    \put(285,26.7){\line(1,0){15}}
    \put(285,26.7){\line(0,1){15}}
    \put(300,41.7){\line(3,2){10}}
    \put(300,26.7){\line(3,2){10}}
    \put(300,26.7){\line(0,1){15}}
    \put(310,33.4){\line(0,1){15}}
    \put(240,41.7){\line(1,0){60}}
    \put(250,48.4){\line(1,0){60}}
    \put(220,40){\small $+$}
    % middle block
    \put(230,60){\line(1,0){15}}
    \put(230,75){\line(1,0){15}}
    \put(250,88.4){\line(1,0){30}}
    \put(230,60){\line(0,1){15}}
    \put(245,60){\line(0,1){15}}
    \put(245,60){\line(3,2){10}}
    \put(245,75){\line(3,2){20}}
    \put(230,75){\line(3,2){20}}
    \put(280,73.4){\line(0,1){15}}
    \put(255,66.7){\line(0,1){15}}
    \put(255,66.7){\line(1,0){15}}
    \put(240,81.7){\line(1,0){30}}
    \put(270,66.7){\line(3,2){10}}
    \put(270,66.7){\line(0,1){15}}
    \put(270,81.7){\line(3,2){10}}
    \put(220,80){\small $+$}
    % highest block
    \put(250,128.4){\line(1,0){75}}
    \put(240,121.7){\line(1,0){75}}
    \put(240,106.7){\line(1,0){75}}
    \multiput(240,106.7)(15,0){6}{\line(0,1){15}}
    \multiput(240,121.7)(15,0){6}{\line(3,2){10}}
    \put(315,106.7){\line(3,2){10}}
    \put(325,113.4){\line(0,1){15}}
%    \put(265,60){\small $=$}
  \end{picture}
\caption{Partitions (of partitions, resp.) and C4 games in $\N^2$ (in $\N^3$, resp.) correspond to each other}
\phantomsection\label{figure:partitionsOfPartitions}
\end{figure}
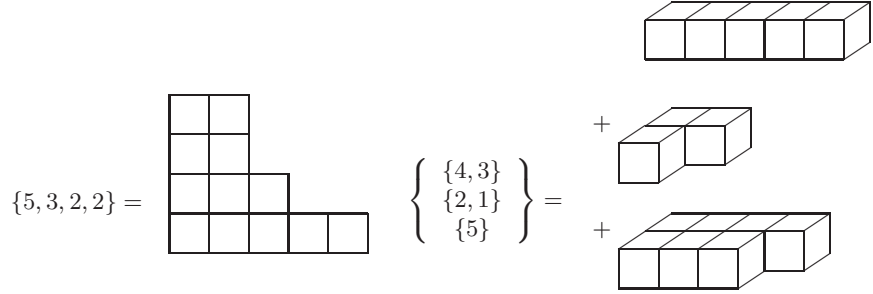
\end{center}

\begin{dfn}[Iterated partition]
  Let $n$ be a positive integer. We recursively define a \emph{$q$-fold iterated partition of $n$} as follows: 
  \begin{itemize}
    \item for $q=1$, it is a partition of $n$, that is, 
    a multiset $p = \set{n_1, \ldots, n_h}$ of positive integers such that $\sum n_i = n$;
    \item for $q>1$, it is a multiset $p = \set{p_1, \ldots, p_h}$ 
    of $(q-1)$-fold iterated partitions of integers $n_1, \ldots, n_h$ such that $\sum n_i = n$. 
  \end{itemize}
\end{dfn}

In other words, we look at all partitions of $n$ into $n_i$, together with all partitions of all parts $n_i$ into $n_{i,j}$, 
together with all partitions of all parts $n_{i,j}$ into $n_{i,j,k}$, etc. 

\begin{dfn}[C4 game]
  Let $n$ be a positive integer. We recursively define a \emph{C4 game of size $n$ in $\N^d$} as follows: 
  \begin{itemize}
    \item for $d=1,2$, it is standard set $\Delta \subseteq \N^d$ of cardinality $n$; 
    \item for $d>2$, it is a multiset $\set{g_1, \ldots, g_h}$ 
    of C4 games of respective sizes $n_i$ in $\N^{d-1}$ such that $\sum n_i = n$. 
  \end{itemize}
\end{dfn}

In other words, we look at all standard sets $\Delta \subseteq \N^d$ of a cardinality $n$, 
together with all C4 decompositions of $\Delta$ into $\Delta_i \subseteq \N^{d-1}$, 
together with all C4 decompositions of all $\Delta_i$ into $\Delta_{i,j} \subseteq \N^{d-2}$, 
together with all C4 decompositions of all $\Delta_{i,j}$ into $\Delta_{i,j,k} \subseteq \N^{d-3}$, etc. 

\begin{pro}
  For all $d, n \in \N$, there is a natural bijection
  \[
    f_d:\set{ (d-1)\text{-fold iterated partitions of } n }
    \to
    \set{ \text{C4 games of size }n \text{ in }\N^d }. 
  \]
\end{pro}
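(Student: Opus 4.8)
The plan is to prove the statement by induction on the dimension $d$, exploiting the fact that the recursive definitions of $q$-fold iterated partitions and of C4 games have exactly the same shape once their base cases are matched up. Concretely, the partition recursion bottoms out at $q=1$, the C4-game recursion bottoms out at $d=2$, and the two base indices correspond under $q = d-1$; for $d>2$ both recursions pass from the current level to one multiset of objects at the level below, with the attached integers summing to $n$. So the bijection should be built level by level, applying the previously constructed bijection member-wise to the multisets.

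For the base case I would take $d=2$, where a $(d-1)$-fold iterated partition of $n$ is just a partition of $n$, and a C4 game of size $n$ in $\N^2$ is a standard set $\Delta\subseteq\N^2$ of cardinality $n$. The map $f_2$ is then the classical identification of a partition with its Young diagram in the French notation (the first bullet of the Example preceding this proposition); it is a size-preserving bijection and serves as the anchor of the induction. The boundary case $d=1$ is degenerate and handled by convention: for each $n$ there is a single standard set in $\N$ of cardinality $n$, so both sides are singletons.

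For the inductive step, suppose $d>2$ and that $f_{d-1}$ is a natural, size-preserving bijection from $(d-2)$-fold iterated partitions of $m$ to C4 games of size $m$ in $\N^{d-1}$, for every $m$. Given a $(d-1)$-fold iterated partition $p=\set{p_1,\ldots,p_h}$ of $n$ --- that is, a multiset of $(d-2)$-fold iterated partitions $p_i$ of integers $n_i$ with $\sum n_i=n$ --- I would define
\[
  f_d(p)\defeq\set{f_{d-1}(p_1),\ldots,f_{d-1}(p_h)}.
\]
Since each $f_{d-1}(p_i)$ is a C4 game of size $n_i$ in $\N^{d-1}$ and $\sum n_i=n$ is unchanged, $f_d(p)$ is a C4 game of size $n$ in $\N^d$, so $f_d$ is well defined; its inverse is obtained by applying $f_{d-1}^{-1}$ member-wise, whence $f_d$ is a bijection.

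The step that requires the most care is the multiset bookkeeping rather than any hard combinatorics. I would invoke (or quickly verify) the general fact that a bijection $\phi\colon X\to Y$ induces a bijection on finite multisets by acting member-wise, and that when $\phi$ carries a size-$m$ object to a size-$m$ object, the induced map sends multisets of total size $n$ to multisets of total size $n$. The only genuine content beyond unwinding definitions is checking that this size-preservation threads correctly through both recursions --- that the integer $n_i$ attached to $p_i$ equals the size of the C4 game $f_{d-1}(p_i)$ --- and this is exactly the inductive hypothesis. Finally, that $f_d$ is \emph{natural}, i.e.\ canonical and independent of any ordering chosen to list the members of the multisets, is immediate from the symmetric, member-wise definition, so naturality is inherited from $f_{d-1}$ at every stage.
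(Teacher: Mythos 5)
Your proposal is correct and matches the paper's proof essentially verbatim: the paper also treats $d=1,2$ as the (obvious) base cases and, for $d>2$, defines $f_d$ by applying $f_{d-1}$ member-wise to the multiset, with the inverse given the same way. Your extra care about size preservation and multiset bookkeeping is a fuller writeup of the same argument, not a different route.
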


\begin{proof}
  The assertion is obvious for $d=1,2$. 
  For $d>2$, the bijection $f_d$ sends each multiset $\set{H_1, \ldots, H_l}$ 
  of $(d-2)$-fold iterated partitions of integers $n_1, \ldots, n_l$ to the multiset $\set{f_{d-1}(H_1), \ldots, f_{d-1}(H_l)}$. 
\end{proof}

Note that the bijection is only natural up to the choice of coordinate axes in $\N^d$. 
In other words, replacing the tuple $(e_1, \ldots, e_d)$ of standard basis elements by 
$(e_{\sigma(1)}, \ldots, e_{\sigma(d)})$ for some permutation $\sigma$ induces an automorphism of the 
source of bijection $f_d$. 
For $d=2$, this corresponds to the ambiguity between a partition and its transpose. 

\bibliography{references}
\bibliographystyle{amsalpha}

\end{document}